\documentclass[reqno]{amsart}

%------------Start of the authours' settings--------------------
\usepackage{amsmath,amssymb,amsthm,amsfonts,mathrsfs}
\usepackage{fullpage}
\usepackage{xcolor}
\usepackage{graphicx}
\usepackage{listings}
\usepackage{bbm}

%New colors defined below
\definecolor{codegreen}{rgb}{0,0.6,0}
\definecolor{codegray}{rgb}{0.5,0.5,0.5}
\definecolor{codepurple}{rgb}{0.58,0,0.82}
\definecolor{backcolour}{rgb}{0.95,0.95,0.92}

%Code listing style named "mystyle"
\lstdefinestyle{mystyle}{
  backgroundcolor=\color{backcolour},   commentstyle=\color{codegreen},
  keywordstyle=\color{magenta},
  numberstyle=\tiny\color{codegray},
  stringstyle=\color{codepurple},
  basicstyle=\ttfamily\footnotesize,
  breakatwhitespace=false,         
  breaklines=true,                 
  captionpos=b,                    
  keepspaces=true,                 
  numbers=left,                    
  numbersep=5pt,                  
  showspaces=false,                
  showstringspaces=false,
  showtabs=false,                  
  tabsize=2
}

%"mystyle" code listing set
\lstset{style=mystyle}
\usepackage{listings}
\usepackage[colorlinks=true,
linkcolor=blue,
anchorcolor=blue,
citecolor=red
]{hyperref}
\allowdisplaybreaks % allow equations break between two pages
% THEOREMS Environment ------------------
\newtheorem{theorem1number}{Theorem}
\newtheorem{theorem}{Theorem}[section]
\newtheorem{lemma}[theorem]{Lemma}

\newtheorem*{conjecture*}{Conjecture}
\theoremstyle{definition}

\theoremstyle{remark}

\newtheorem*{remark*}{remark}
%Shortcuts of commands --------------------

\usepackage{colonequals}

%------------End of the authours' settings----------------------

%------------authour's informaiton----------------

\author{\textbf{Runbo Li}}
\address{International Curriculum Center, The High School Affiliated to Renmin University of China, Beijing, China}
\email{runbo.li.carey@gmail.com}

%\date{\today}

\makeatletter
\@namedef{subjclassname@2020}{\textup{}2020 Mathematics Subject Classification}
\makeatother

%------------Title and keywords------
\title[]{On the largest prime factor of integers in short intervals III}
\subjclass[2020]{\textbf{11N05}, \textbf{11N35}, \textbf{11N36}} 
\keywords{\textbf{Prime}, \textbf{Sieve methods}, \textbf{Dirichlet polynomial}}

\begin{document}
	
\begin{abstract}
Using Watt's mean value theorem and a delicate sieve decomposition, the author shows that the interval $[x, x+x^{\frac{1}{2}+\varepsilon}]$ contains an integer with a prime factor larger than $x^{\frac{35}{36}-\varepsilon}$ for sufficiently large $x$. This gives a solution with $\gamma = \frac{1}{36}$ to the Exercise 5.1 in Harman's monograph and improves the previous record of the author proved in 2024, where $\gamma = \frac{1}{26.5}$ is obtained.
\end{abstract}

\maketitle
\hfill \textit{If you are feeling brave, try to bring in Watt's mean value theorem.}
\hfill \textit{---Glyn Harman}

\tableofcontents

\section{Introduction}
The Legendre's conjecture, which states that there is always a prime number between consecutive squares, is one of Landau's problems on prime numbers. Clearly this means that there is always a prime number in the interval $[x, x+x^{\frac{1}{2}}]$. However, we cannot prove it even on the Riemann Hypothesis. Assuming RH, one can only show that there is always a prime number in the interval $[x, x+x^{\frac{1}{2}}\log x]$. The best unconditional result is due to the author \cite{LRB052}, where he showed the interval $[x, x+x^{0.52}]$ contains primes. Under a special condition concerning the existence of Siegel zeros, he \cite{LRB4923} also showed that the interval $[x, x+x^{0.4923}]$ contains primes for long ranges of $x$.

Instead of relaxing the length of the short interval, one can attack this conjecture by relaxing our restriction of primes. A number with a large prime factor is a good approximation of prime numbers. Thus, we can try to find numbers with a large prime factor in the interval $[x, x+x^{\frac{1}{2}}]$ or $[x, x+x^{\frac{1}{2} + \varepsilon}]$. In \cite{LRB5153} and \cite{LRB7437} we considered the former and latter intervals respectively, and here we shall return to the former one again.

In this direction, Jutila \cite{Jutila23} showed in 1973 that this interval contains a number with a prime factor larger than $x^{\frac{2}{3}-\varepsilon}$. The exponent $\frac{2}{3}$ has been improved to
$$
0.73,\ 0.7338,\ 0.772,\ 0.82,\ \frac{11}{12},\ \frac{17}{18},\ \frac{19}{20},\ \frac{24}{25},\ \frac{25}{26} \text{ and } \frac{25.5}{26.5}
$$
by Balog \cite{Balog73} \cite{Balog772}, Balog, Harman and Pintz \cite{BalogHarmanPintz2}, Heath-Brown \cite{HeathBrown1112}, Heath-Brown and Jia \cite{HeathBrownJia1718}, Harman [\cite{HarmanBOOK}, Chapter 5], Haugland \cite{Haugland}, Jia and Liu \cite{JiaLiu} and the author \cite{LRB5153} respectively. In his monograph, Harman [\cite{HarmanBOOK}, Exercise 5.1] encouraged us to reduce this exponent as much as we can. In this paper, by applying Watt's mean value theorem \cite{WattTheorem} together with a delicate sieve decomposition and the arguments in \cite{LRB5153}, we give a large improvement over previous result by obtaining the following result.
\begin{theorem1number}\label{t1}
For sufficiently large $x$, the interval $[x, x+x^{\frac{1}{2}+\varepsilon}]$ contains an integer with a prime factor larger than $x^{\frac{35}{36}-\varepsilon}$.
\end{theorem1number}

Throughout this paper, we always suppose that $\varepsilon$ is a sufficiently small positive constant and $B=B(\varepsilon)$ is a sufficiently large positive constant. We choose $\varepsilon$ such that $K = \frac{8}{\varepsilon}(\frac{1}{36}+\frac{\varepsilon}{2})$ is an integer. The letters $p$ and $\beta$, with or without subscript, are reserved for primes and almost-primes respectively. Let $c_1$ and $c_2$ denote positive constants which may have different values at different places, and we write $m \sim M$ to mean that $c_1 M<m \leqslant c_2 M$. Let $b = 1+\frac{1}{\log x}$, $v=x^{\frac{35}{36}-\frac{\varepsilon}{2}}$ and $P = x^{\frac{\varepsilon}{8}}$. We use $M(s)$, $N(s)$ and some other capital letters (except $L$ and $P$) to denote $1$-bounded Dirichlet polynomials
$$
M(s)=\sum_{m \sim M} a_m m^{-s}, \quad N(s)=\sum_{n \sim N} b_n n^{-s}.
$$
We use $L(s)$ to denote a ``zeta-factor''
$$
L(s)=\sum_{l \sim L} l^{-s}.
$$
We also use $P(s)$ to denote
$$
P(s)=\sum_{P < p \leqslant 2P} p^{-s}.
$$

\section{Arithmetic Information}
In this section we provide some arithmetic information (i.e. mean value bounds for some Dirichlet polynomials) which will help us prove the asymptotic formulas for the sieve functions in next section. The first two lemmas come from the work of Heath-Brown and Jia \cite{HeathBrownJia1718}, and they are also the main arithmetic information inputs used in [\cite{HarmanBOOK}, Chapter 5] and \cite{Haugland}.

\begin{lemma}\label{l21} ([\cite{HeathBrownJia1718}, Lemma 1]).
Suppose that $M N = v$ and $v^{\frac{17}{35}} \ll M \ll v^{\frac{18}{35}}$. Then for $(\log x)^{2B} \leqslant T \leqslant x^{\frac{1}{2}-\frac{\varepsilon}{6}}$, we have
$$
\int_{T}^{2 T}|M(b+i t) N(b+i t) P^K (b+i t)| d t \ll (\log x)^{-B}.
$$
\end{lemma}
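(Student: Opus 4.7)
The key mechanism I would exploit is that the coefficient vector of $P^K$ is supported on the $K$-fold products of primes in the dyadic range $(P,2P]$, a set of relative density $\sim K!/(\log P)^K$. Because $K = \frac{8}{\varepsilon}(\frac{1}{36}+\frac{\varepsilon}{2})$ is large for small $\varepsilon$, this sparsity is the ultimate source of the claimed saving $(\log x)^{-B}$. My plan is to first isolate this sparsity factor via a mean-value theorem (MVT) estimate for $P^K$ alone, and then combine it with standard MVT bounds for $MN$ using Cauchy--Schwarz, finally refining with a fourth-moment input in the stated range of $M$.

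First I would establish the sparse-support $L^2$ bound
\[
\int_T^{2T}|P^K(b+it)|^2\,dt \ll \frac{K!}{(\log P)^K}\Bigl(1+\frac{T}{P^K}\Bigr),
\]
using Montgomery's MVT together with the observation that $\sum_n |c_n|^2 n^{-2b}\asymp K!/(P^K (\log P)^K)$ for the coefficient vector $c_n$ of $P^K$. In parallel, a standard MVT applied to $MN$ (Dirichlet polynomial of length $v$ with $d(n)$-bounded coefficients) gives $\int_T^{2T}|MN(b+it)|^2\,dt \ll (\log x)^c$. Applying Cauchy--Schwarz
\[
\int_T^{2T}|MN P^K|\,dt \leq \Bigl(\int_T^{2T}|MN|^2\,dt\Bigr)^{1/2}\Bigl(\int_T^{2T}|P^K|^2\,dt\Bigr)^{1/2}
\]
then yields a bound of roughly $T^{1/2}(\log x)^{-K/2+c}$, which is not yet $(\log x)^{-B}$ because of the $T^{1/2}\leq x^{1/4}$ factor.

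To overcome this, I would split $K=K_1+K_2$ and use Cauchy--Schwarz with $MN P^{K_1}$ in the first factor and $P^{K_2}$ in the second. The first factor is then controlled by a fourth-moment estimate
\[
\int_T^{2T}|MN(b+it)|^4\,dt \ll (\log x)^c,
\]
which in the range $v^{17/35}\ll M\ll v^{18/35}$ is available from standard divisor-function mean-value calculations (this is precisely the regime where the Heath-Brown--Jia MVT is sharp). Combined with the sparse $L^2$ bound for $P^{K_2}$, this should suffice: choosing $K_2$ large enough that $(\log P)^{-K_2/2}$ absorbs the residual $T^{1/2}$ from Cauchy--Schwarz and choosing $K_1$ so that $MN P^{K_1}$ still satisfies the fourth moment bound, the combined estimate gives the claim once $\varepsilon$ is small enough (and hence $K$ large enough) so that $B$ is reached.

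The main obstacle is the delicate balancing of $K_1$ and $K_2$: if $K_2$ is too small we do not extract enough log-savings from the sparse support of $P^{K_2}$ to overcome the polynomial factors in $T$; if $K_2$ is too large, then $MN P^{K_1}$ loses the structure needed to invoke the fourth-moment estimate in the tight range $v^{17/35}\ll M\ll v^{18/35}$. The upper bound $T\leq x^{1/2-\varepsilon/6}$ is used at two places: to keep the diagonal of the MVT under control for the $MN P^{K_1}$ factor, and to ensure that $T^{1/2}$ is genuinely dominated by the log-savings coming from $P^{K_2}$'s sparsity, making this the critical condition on $T$ in the lemma.
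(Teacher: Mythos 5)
There is a genuine gap, and it is structural rather than technical. First, note that the paper itself offers no proof of this statement: it is imported wholesale from Lemma 1 of Heath-Brown and Jia, whose proof is a deep argument combining the reflection principle, Hal\'asz--Montgomery and Huxley large-value estimates, and moment estimates for $\zeta$ (with Kloosterman-sum technology of Deshouillers--Iwaniec entering in this circle of results); the restricted range $v^{\frac{17}{35}} \ll M \ll v^{\frac{18}{35}}$ is exactly the range those tools can cover, not a range where ``standard divisor-function mean-value calculations'' suffice. Your sketch replaces all of this by the mean value theorem plus Cauchy--Schwarz, and that machinery provably cannot close the argument: with $M$, $N$ both of size about $x^{\frac{1}{2}}$ and $T$ as large as $x^{\frac{1}{2}-\frac{\varepsilon}{6}}$, any bound of the shape $\bigl(\int|MNP^{K_1}|^2\bigr)^{1/2}\bigl(\int|P^{K_2}|^2\bigr)^{1/2}$ loses a positive power of $x$. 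Concretely, since $\frac{K\varepsilon}{8}=\frac{1}{36}+\frac{\varepsilon}{2}$, the full polynomial $P^{K}$ has length only $P^{K}=x^{\frac{1}{36}+\frac{\varepsilon}{2}}$, so even taking $K_2=K$ your second factor is of order $(T/P^{K})^{1/2}\gg x^{\frac{17}{72}-O(\varepsilon)}$ up to logarithms; ``choosing $K_2$ large enough that $(\log P)^{-K_2/2}$ absorbs $T^{1/2}$'' is impossible because a fixed log-power can never absorb a power of $x$ and because $K_2\leqslant K$ is capped. Your fourth-moment bound $\int_T^{2T}|MN(b+it)|^4\,dt\ll(\log x)^{c}$ is true (trivially, from the mean value theorem, as $T\ll v$) but does not repair this.

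The second defect is your identification of the source of the $(\log x)^{-B}$ saving. The sparsity bound $\int_T^{2T}|P^{K}(b+it)|^2\,dt\ll K!(\log P)^{-K}(1+T P^{-K})$ is correct, but it only ever saves the \emph{fixed} power $(\log x)^{-K}$, with $K$ determined by $\varepsilon$, whereas $B=B(\varepsilon)$ is taken sufficiently large (in particular, larger than $K$); so density of the support of $P^{K}$ cannot be ``the ultimate source'' of the claimed bound. In this family of lemmas the log-saving attached to $P$ is pointwise, not metric: by partial summation and the prime number theorem with a strong (Vinogradov--Korobov) error term one has $|P(b+it)|\ll (|t|\log x)^{-1}+(\log x)^{-1}\exp\bigl(-c(\log x)^{1/3}\bigr)$, which for $(\log x)^{2B}\leqslant |t|$ yields an arbitrary log-power saving from a single factor of $P$. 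This is precisely why the hypothesis $T\geqslant(\log x)^{2B}$ appears in the statement --- a hypothesis your argument never uses, which is itself a signal that the proposed mechanism is not the one driving the lemma. Even with that pointwise saving in hand, one still cannot finish by mean values and Cauchy--Schwarz alone (the residual loss of about $x^{1/72}$ remains), which is why the large-value and fourth-moment-of-$\zeta$ machinery of Heath-Brown and Jia is genuinely needed; a proof along your lines would amount to removing the need for their theorem, and it does not.
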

\begin{lemma}\label{l22} ([\cite{HeathBrownJia1718}, Lemma 2]).
Suppose that $M N L=v$, $M \ll v^{\frac{18}{35}}$ and $N \ll v^{\frac{9}{35}}$. Then for $\sqrt{L} \leqslant T \leqslant x^{\frac{1}{2}-\frac{\varepsilon}{6}}$, we have
$$
\int_{T}^{2 T}|M(b+i t) N(b+i t) L(b+i t) P^K (b+i t)| d t \ll (\log x)^{-B}.
$$
\end{lemma}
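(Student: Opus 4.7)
My plan is to combine Cauchy--Schwarz with Montgomery's classical mean value theorem for Dirichlet polynomials, using the multiplicative flexibility of $P^{K}(s)$ to balance the two resulting mean squares. The lemma is essentially the standard zeta-factor mean value estimate underlying the Chapter~5 treatment in Harman's monograph, so the strategy is to mirror that approach.

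Concretely, I would write $P^{K}(s) = P^{k_{1}}(s)\,P^{k_{2}}(s)$ with $k_{1}+k_{2}=K$ non-negative integers, group the polynomials as $A(s) = M(s)\,P^{k_{1}}(s)$ and $B(s) = N(s)\,L(s)\,P^{k_{2}}(s)$, and apply Cauchy--Schwarz:
$$
\int_{T}^{2T} |MNLP^{K}(b+it)|\,dt \leq \left(\int_{T}^{2T} |A(b+it)|^{2}\,dt\right)^{1/2}\!\left(\int_{T}^{2T} |B(b+it)|^{2}\,dt\right)^{1/2}.
$$
Each of $A$ and $B$ is a Dirichlet polynomial whose coefficients are bounded by a fixed divisor function, of lengths $\asymp MP^{k_{1}}$ and $\asymp NLP^{k_{2}}$ respectively. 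For such $D(s)$ of length $\asymp D$, Montgomery's mean value theorem together with the $n^{-b}$ weighting yields
$$
\int_{T}^{2T} |D(b+it)|^{2}\,dt \ll \left(\frac{T}{D}+1\right)(\log x)^{O(1)},
$$
and refinements exploiting the zeta-factor structure of $L(s)$ under $\sqrt{L}\leq T$ (so $L\leq T^{2}$, the natural regime where a piece of $\zeta(s)$-type cancellation is available) save further powers of $\log x$. I would then choose $k_{1}$ and $k_{2}$ so that both $MP^{k_{1}}$ and $NLP^{k_{2}}$ are pushed as close as possible to $T$. Since $MNLP^{K} = vP^{K} = x$ and $T \leq x^{1/2-\varepsilon/6}$, the ratio $T^{2}/(MP^{k_{1}} \cdot NLP^{k_{2}}) = T^{2}/x$ is $\leq x^{-\varepsilon/3}$, which is a polynomial saving that vastly exceeds the target $(\log x)^{-B}$.

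The main obstacle is verifying that a suitable pair $(k_{1},k_{2})\in\Z_{\geq 0}^{2}$ with $k_{1}+k_{2}=K$ can always be chosen under the hypotheses, and, in particular, when $M$ or $N$ is close to its boundary, ensuring that enough copies of $P$ are available on each side to bring both blocks up to length $\gg T(\log x)^{2B}$. The upper bounds $M \ll v^{18/35}$ and $N \ll v^{9/35}$ (which force $L \gg v^{8/35}$) together with $\sqrt{L} \leq T$ are precisely what make this balancing feasible: the former cap how large $M$ and $N$ can be, while the latter pins $L$ into a range where its mean-square contribution behaves like a genuine zeta-factor. In the edge case where one of $M$, $N$ is very small, I would regroup by pairing $L$ with the small factor (e.g.\ $(MLP^{k_{1}})(NP^{k_{2}})$); the threshold $\sqrt{L}\leq T$ is then exactly what keeps the second-moment estimate for the combined block in the favorable regime, and a short case analysis according to the sizes of $M$, $N$, $L$ closes the argument in every subcase.
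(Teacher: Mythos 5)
The paper does not prove this lemma at all: it is quoted verbatim from Heath--Brown and Jia ([HeathBrownJia1718], Lemma 2), so the only ``proof'' in the paper is the citation. Your proposal is therefore a genuinely independent attempt, but it contains a fatal quantitative error at its centre. The mean value theorem gives $\int_T^{2T}|D(b+it)|^2\,dt \ll (T/D+1)(\log x)^{O(1)}$, and the ``$+1$'' (the off-diagonal/Montgomery--Vaughan term) cannot be discarded. Since $MP^{k_1}\cdot NLP^{k_2}=MNLP^{K}=x$ and $T\leq x^{1/2-\varepsilon/6}$, at least one of the two blocks always has length exceeding $T$ (in fact exceeding $Tx^{\varepsilon/12}$), so for that block the bound is $\gg 1$ in the normalized units, and the Cauchy--Schwarz product is \emph{not} $T^2/x$; your claimed ``polynomial saving $T^2/x\leq x^{-\varepsilon/3}$'' comes from multiplying $T/D_A$ by $T/D_B$ as if both diagonal-free terms were available simultaneously, which is impossible here. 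What Cauchy--Schwarz plus the mean value theorem actually yields, after accounting for the trivial smallness of the prime blocks ($|P(b+it)|\ll_{\varepsilon}1/\log x$, of which Cauchy--Schwarz retains only half the exponent), is a bound of the shape $(\log x)^{-K/2+O(1)}$ --- a \emph{fixed} power of $\log x$.

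That is not enough for the statement, and not merely for formal reasons: in this argument the main terms themselves carry the factor $\bigl(\sum_{P<p\leq 2P}p^{-1}\bigr)^{K}\asymp_{\varepsilon}(\log x)^{-K}$, so the asymptotic formulas require error terms smaller than $(\log x)^{-K-c}$, i.e.\ the lemma must hold with $B$ exceeding $K$; a ceiling of $(\log x)^{-K/2+O(1)}$ therefore falls short by construction, and no choice of $k_1,k_2$ or regrouping of $L$ with $M$ or $N$ repairs this, because the obstruction is the diagonal itself. The genuine proof has to beat the diagonal by exploiting structure that your argument never uses quantitatively: the zeta-factor $L(s)$ is a smooth sum, and the hypothesis $\sqrt{L}\leq T$ is exactly what allows one to apply van der Corput/exponent-pair estimates or the fourth moment of $\zeta$ to it (compare Lemma~\ref{Watt} and the Condition (2)/(3) computations in Lemma~\ref{l23}, where $|L|^4$ is paired with a short polynomial via H\"older); in addition, the prime-supported polynomials are needed, via the Vinogradov--Korobov zero-free region and large-value/Hal\'asz technology, to convert restricted-range power savings into the arbitrary $(\log x)^{-B}$ saving. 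Your single sentence that ``refinements exploiting the zeta-factor structure save further powers of $\log x$'' gestures at this but supplies none of it, and powers of $\log x$ on top of a bound that is already only $(\log x)^{-O_{\varepsilon}(1)}$ would still not suffice. So the proposal, as it stands, does not prove the lemma.
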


In Exercise 5.1 in his monograph [\cite{HarmanBOOK}, Chapter 5], Harman encouraged the readers to ``reduce the value of $\gamma$ (the exponent) as much as you can.''  He also said that ``if you are feeling brave, try to bring in Watt's mean value theorem \cite{WattTheorem}.'' Here we shall use another form of Watt's theorem given in \cite{LRB122}.
\begin{lemma}\label{Watt}([\cite{LRB122}, Lemma 4.1]).
Let $T \geqslant 1$, then
$$ 
\int_{T}^{2 T}\left|L\left(\frac{1}{2}+i t\right)\right|^{4}\left|N\left(\frac{1}{2}+i t\right)\right|^{2} d t \ll \left(T+N^{2} T^{\frac{1}{2}}+ L^2 T^{-4} (N+T) \right) T^{\frac{\varepsilon}{2}}.
$$
\end{lemma}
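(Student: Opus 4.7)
The plan is to combine Watt's classical fourth-moment theorem for $\zeta(1/2+it)$ twisted by a Dirichlet polynomial \cite{WattTheorem} with a sharp pointwise estimate for the ``zeta-factor'' $L(s)$. The former handles the range in which $L(s)$ resembles $\zeta(s)$ and yields the $T + N^2 T^{1/2}$ contribution, while the latter exploits the cancellation in the short sum $L(s)$ and yields the extra $L^2 T^{-4}(N+T)$ term.

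The first step is Euler--Maclaurin summation: for $s = 1/2 + it$ with $t \asymp T$,
$$
L(s) = \frac{(c_2 L)^{1-s} - (c_1 L)^{1-s}}{1-s} + O(L^{-1/2}),
$$
which gives the pointwise bound $|L(1/2+it)|^4 \ll L^2 T^{-4} + L^{-2}$. Combined with the standard mean value theorem $\int_T^{2T} |N(1/2+it)|^2\, dt \ll (T+N) T^{\varepsilon/4}$, this immediately yields
$$
\int_T^{2T} |L(1/2+it)|^4 |N(1/2+it)|^2\, dt \ll L^2 T^{-4}(T+N) T^{\varepsilon/2},
$$
the third claimed term (the auxiliary $L^{-2}(T+N)$ piece being dominated by $T$ in the relevant range). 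For the other two terms I would apply Watt's theorem in the form
$$
\int_T^{2T} |\zeta(1/2+it)|^4 |N(1/2+it)|^2\, dt \ll (T + N^2 T^{1/2}) T^{\varepsilon/4},
$$
and transfer the bound from $\zeta(s)$ to $L(s)$ by writing $L(s)$ as a difference of two partial zeta-sums and replacing each by $\zeta(s)$ plus a tail handled by the Euler--Maclaurin estimate above, so that the contribution of $|L|^4$ splits into a part majorised by $|\zeta|^4$ and an error which is fed back into the pointwise route.

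The main obstacle I expect is the clean separation of the two routes without double counting: one must verify that every triple $(L,N,T)$ within the valid range lies in a regime where one of the two estimates gives the correct bound, while keeping all logarithmic losses inside the $T^{\varepsilon/2}$ factor. A case split on whether $L^{1/2}/T$ or $L^{-1/2}$ dominates the Euler--Maclaurin bound, together with a comparison between the ``Watt gain'' $T + N^2 T^{1/2}$ and the ``pointwise gain'' $L^2 T^{-4}(N+T)$, should close the argument and reproduce the stated three-term bound.
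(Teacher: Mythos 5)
You are proposing to prove a lemma that the paper itself does not prove but simply imports (it is quoted as Lemma 4.1 of [LRB122]), so the comparison can only be with the standard argument behind such statements; against that, your proposal has a genuine gap. The pointwise approximation $L(\tfrac12+it)=\frac{(c_2L)^{1-s}-(c_1L)^{1-s}}{1-s}+O(L^{-1/2})$ is valid only when $t\ll L$ (the Titchmarsh Theorem 4.11 regime, $2\pi c_1 L>t$), i.e.\ essentially $L\gg T$. For $L\ll T$ it is false: it would give $|L(\tfrac12+it)|\ll L^{1/2}T^{-1}+L^{-1/2}$, hence $\int_T^{2T}|L(\tfrac12+it)|^2\,dt\ll L/T+T/L=o(T)$ when $T^{1/2}\ll L\ll T$, contradicting the mean value theorem, which gives $\asymp T$ there. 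The same restriction invalidates your transfer step ``partial sum $=\zeta$ plus a tail handled by Euler--Maclaurin'': the relation $\sum_{n\le x}n^{-s}=\zeta(s)+\frac{x^{1-s}}{s-1}+O(x^{-1/2})$ requires $x\gg|t|$, and for $x=c_2L\ll T$ the discrepancy between the partial sum and $\zeta$ is governed by the approximate functional equation (a reflected sum of length about $T/L$), not by an $O(L^{-1/2})$ error. Since the lemma is applied with $\sqrt{L}\le T$, the range $L\ll T$ is the main case --- precisely the one where Watt's theorem is needed --- and your argument does not cover it.

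The standard way to close this is a Perron--Mellin reflection rather than a case split: write $L(s)=\frac{1}{2\pi i}\int_{(c)}\zeta(s+w)\frac{(c_2L)^{w}-(c_1L)^{w}}{w}\,dw$ with $c=\frac12+\frac{1}{\log x}$, truncate at height $T/2$, and move the contour to $\Re w=0$. The pole at $w=1-s$ contributes $\ll L^{1/2}T^{-1}$, and its fourth power times $\int_T^{2T}|N(\tfrac12+it)|^2\,dt\ll(T+N)\log x$ yields exactly the term $L^{2}T^{-4}(N+T)$; note this term is the residue contribution, present for all $L$, not the output of a separate ``$L\gg T$ pointwise'' regime as in your sketch. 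The remaining piece gives $|L(\tfrac12+it)|\ll\int_{|v|\le T/2}|\zeta(\tfrac12+i(t+v))|\min(1,|v|^{-1})\,dv+{}$small errors, and H\"older's inequality in $v$ followed by Watt's theorem applied to $\int_T^{2T}|\zeta(\tfrac12+i(t+v))|^{4}|N(\tfrac12+it)|^{2}\,dt$ (absorb $n^{-iv}$ into the $1$-bounded coefficients and shift $t$ by $v$) produces $(T+N^{2}T^{1/2})T^{\varepsilon/2}$. Your Euler--Maclaurin bound is correct, and useful, only in the subrange $L\gg T$; to make the proposal sound you must replace the ``$\zeta$ plus tail'' transfer by such a reflection argument (or another device valid for $L\ll T$).
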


We shall use Lemma~\ref{Watt} to derive the following important lemma, which gives more Type-I information than in \cite{JiaLiu} and \cite{LRB5153}. This new arithmetic information, together with a more delicate sieving process similar to that in \cite{LRB052}, is the key of our improvement over \cite{LRB5153}.
\begin{lemma}\label{l23}
Suppose that $M N H L=v$, and $M$, $N$ and $H$ satisfy one of the following $3$ conditions:

(1). $M \ll v^{\frac{18}{35}},\ N \gg H,\ N^{\frac{3}{4}}H \ll v^{\frac{9}{35}},\ N H^{\frac{1}{2}} \ll v^{\frac{9}{35}},\ N^{\frac{7}{4}} H^{\frac{3}{2}} \ll v^{\frac{18}{35}}$;

(2). $M \ll v^{\frac{18}{35}},\ N \ll v^{\frac{9}{35}},\ H \ll v^{\frac{9}{140}}$;

(3). $M \ll v^{\frac{18}{35}},\ N^2 H \ll v^{\frac{18}{35}},\ H \ll v^{\frac{9}{70}}$.

Then for $\sqrt{L} \leqslant T \leqslant x^{\frac{1}{2}-\frac{\varepsilon}{6}}$, we have
$$
\int_{T}^{2 T}|M(b+i t) N(b+i t) H(b+i t) L(b+i t) P^K (b+i t)| d t \ll (\log x)^{-B}.
$$
\end{lemma}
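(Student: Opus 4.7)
The plan is to apply H\"older's inequality (or iterated Cauchy--Schwarz) to the integrand $|MNHLP^K|(b+it)$ in such a way that one of the resulting factors takes the form $\int |L|^4|\Xi|^2$, to which Lemma~\ref{Watt} applies, with $\Xi$ a suitable Dirichlet polynomial built from some of $M, N, H, P^K$. The remaining factors are then bounded using classical mean-value estimates for Dirichlet polynomials (Parseval, Montgomery--Vaughan). Since Lemma~\ref{Watt} is stated on the critical line, I would first shift the integral from $\Re(s)=b$ to $\Re(s)=1/2$, which is essentially cost-free given that $b=1+1/\log x$.

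Concretely, I would use H\"older with exponents $(2,4,4)$ in the form
\[
\int_{T}^{2T}|MNHLP^K|\,dt \leq \left(\int |f_1|^2\,dt\right)^{1/2}\left(\int |f_2|^4\,dt\right)^{1/4}\left(\int |f_3|^4\,dt\right)^{1/4},
\]
where $f_1 f_2 f_3 = MNHLP^K$ is chosen so that $|f_2|^4 = |L|^4|\Xi|^2$ for a combined polynomial $\Xi$ adapted to the case at hand. In case (1), because $N\gg H$, I would take $\Xi(s)=N(s)H(s)$ of effective size $NH$; the three conditions $N^{3/4}H\ll v^{9/35}$, $NH^{1/2}\ll v^{9/35}$, $N^{7/4}H^{3/2}\ll v^{18/35}$ then serve to control (respectively) the three terms $T$, $L^2T^{-4}(\Xi+T)$, and $\Xi^2T^{1/2}$ in the Watt estimate, once multiplied against the mean-square bounds for $|f_1|^2$ and $|f_3|^4$. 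In case (2), where $H\ll v^{9/140}$ is very small, I would absorb $H$ via the pointwise bound $|H(1/2+it)|\ll H^{1/2}$, so that the Watt factor reduces to $\int |L|^4|N|^2$ and effectively recovers the hypotheses of Lemma~\ref{l22}. In case (3), the combination is chosen so that $|\Xi|^2$ behaves like $N^2 H$ (for instance via $\|H\|_\infty\cdot\int|L|^4|N|^2$ after an $L^\infty$ bound on $H$), directly matching the constraint $N^2H\ll v^{18/35}$ needed for the $\Xi^2T^{1/2}$ term.

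The main obstacle will be the delicate balancing in case (1): the Watt bound contains three competing terms, and the complementary mean squares contribute further size parameters, so ensuring the product is $\ll(\log x)^{-B}$ uniformly for $T\in[\sqrt L,x^{1/2-\varepsilon/6}]$ requires all five simultaneous size constraints to be tight. I expect to handle this by splitting the range of $T$ into subintervals in which a different term of the Watt estimate dominates, using the relation $MNHL=v$ to eliminate $L$ throughout, and then verifying one inequality per subinterval. Cases (2) and (3), by contrast, should reduce more cleanly, with just one dominant Watt term and a single application of Parseval for the complementary factors.
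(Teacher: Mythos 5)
Your overall skeleton for conditions (2)--(3) (pass to the half-line, H\"older with exponents $(2,4,4)$ putting $M(s)P^K(s)$ in the square factor, Watt for the factor containing $L$, the classical mean value theorem elsewhere) is indeed the paper's strategy, but both of your key specific choices break down. First, condition (1) is not a Watt application at all in the paper: it is handled as in Lemma 3 of \cite{JiaLiu}, whose engine is Theorem 2 of \cite{DI84}, a fourth-moment estimate for a zeta-factor against the square of a \emph{product of two} Dirichlet polynomials that exploits the separate lengths $N$ and $H$. Treating $\Xi=NH$ as a single polynomial and invoking Lemma~\ref{Watt} cannot produce the fractional-exponent constraints $N^{3/4}H\ll v^{9/35}$, $NH^{1/2}\ll v^{9/35}$, $N^{7/4}H^{3/2}\ll v^{18/35}$ (Watt only sees $NH$ and $(NH)^2$), and it fails quantitatively: condition (1) permits $NH$ as large as $v^{54/175}$ (take $N=2H$ with both first two constraints tight), whereas for $T$ near $x^{1/2-\varepsilon/6}$ the term $(NH)^2T^{1/2}$ in Watt, multiplied against the remaining H\"older factors, forces roughly $NH\ll x^{1/8}$, far smaller. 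So your ``delicate balancing'' cannot be carried out with single-polynomial Watt; the two-variable Deshouillers--Iwaniec input is essential there.

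Second, in cases (2) and (3) your plan to absorb $H$ by the pointwise bound $|H(\tfrac12+it)|\ll H^{1/2}$ loses a fixed positive power of $x$ that cannot be recovered. After pulling out $H^{1/2}$ (which is as large as about $x^{1/32}$ in case (2) and $x^{1/16}$ in case (3)), the remaining integral over $MNLP^K$ is still bounded below in your scheme by the diagonal contributions: each mean value contains a term $\asymp T$ independent of the polynomial lengths, so at $T\asymp x^{1/2-\varepsilon/6}$ the three H\"older factors already multiply to about $x^{1/2-c\varepsilon}$, and the reinstated $H^{1/2}$ pushes the total above $x^{1/2}$; shortening the product from $v$ to $v/H$ buys nothing because the $T$-terms do not shrink. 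This is exactly why the paper keeps $H$ inside the integrals: in case (2) it pairs $|H|^4$ with $|L|^4$ and applies Lemma~\ref{Watt} to the length-$H^2$ polynomial $H(s)^2$, so that $H$ enters only through the affordable off-diagonal terms $H^4T^{1/2}+H^2$ (this is genuinely stronger than Lemma~\ref{l22}, which your reduction merely tries to reuse); in case (3) it splits $|H|$ between the two quartic factors, using $\int|N|^4|H|^2\ll T+N^2H$ and Watt with polynomial $H$ for $\int|L|^4|H|^2$, which is where the hypotheses $N^2H\ll v^{18/35}$ and $H\ll v^{9/70}$ actually enter. As written, your proof of all three cases has genuine gaps.
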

\begin{proof}
We only need to prove that
$$
\int_{T}^{2 T}\left|M\left(\frac{1}{2}+i t\right) N\left(\frac{1}{2}+i t\right) H\left(\frac{1}{2}+i t\right) L\left(\frac{1}{2}+i t\right) P^K\left(\frac{1}{2}+i t\right)\right| d t \ll x^{\frac{1}{2}} (\log x)^{-B}
$$
holds true under each condition above.

Assume that Condition (1) holds. The proof of this case is similar to that of [\cite{JiaLiu}, Lemma 3] where [\cite{DI84}, Theorem 2] is used.

Assume that Condition (2) holds. Using H\"{o}lder's inequality and the mean value theorem for Dirichlet polynomials, we have
\begin{align}
\nonumber & \int_{T}^{2 T}\left|M\left(\frac{1}{2}+i t\right) N\left(\frac{1}{2}+i t\right) H\left(\frac{1}{2}+i t\right) L\left(\frac{1}{2}+i t\right) P^K\left(\frac{1}{2}+i t\right)\right| d t \\
\nonumber \ll&\ \left( \int_{T}^{2 T}\left|M\left(\frac{1}{2}+i t\right) \right|^2 \left| P^K\left(\frac{1}{2}+i t\right)\right|^2 d t \right)^{\frac{1}{2}} \left( \int_{T}^{2 T}\left|N\left(\frac{1}{2}+i t\right) \right|^4 d t \right)^{\frac{1}{4}} \\
\nonumber &\ \left( \int_{T}^{2 T}\left|L\left(\frac{1}{2}+i t\right) \right|^4 \left|H\left(\frac{1}{2}+i t\right) \right|^4 d t \right)^{\frac{1}{4}} \\
\nonumber \ll&\ x^{\frac{1}{4}} \left( \int_{T}^{2 T}\left|N\left(\frac{1}{2}+i t\right) \right|^4 d t \right)^{\frac{1}{4}} \left( \int_{T}^{2 T}\left|L\left(\frac{1}{2}+i t\right) \right|^4 \left|H\left(\frac{1}{2}+i t\right) \right|^4 d t \right)^{\frac{1}{4}}.
\end{align}
By the mean value theorem for Dirichlet polynomials, we have
$$
\left( \int_{T}^{2 T}\left|N\left(\frac{1}{2}+i t\right) \right|^4 d t \right)^{\frac{1}{4}} \ll \left(N^2 + T\right)^{\frac{1}{4}}.
$$
Note that $\sqrt{L} \leqslant T$, we have
$$
L^2 T^{-4} (N + T) \ll N + T.
$$
Now, using Lemma~\ref{Watt} with $N = H^2$, we get
$$
\left( \int_{T}^{2 T}\left|L\left(\frac{1}{2}+i t\right) \right|^4 \left|H\left(\frac{1}{2}+i t\right) \right|^4 d t \right)^{\frac{1}{4}} \ll \left(T + H^4 T^{\frac{1}{2}} + H^2 \right)^{\frac{1}{4}} T^{\frac{\varepsilon}{8}}.
$$
Combining the bounds above with Condition (2), we have
\begin{align}
\nonumber & \int_{T}^{2 T}\left|M\left(\frac{1}{2}+i t\right) N\left(\frac{1}{2}+i t\right) H\left(\frac{1}{2}+i t\right) L\left(\frac{1}{2}+i t\right) P^K\left(\frac{1}{2}+i t\right)\right| d t \\
\nonumber \ll&\ x^{\frac{1}{4}} \left(N^2 + T\right)^{\frac{1}{4}} \left(T + H^4 T^{\frac{1}{2}} + H^2 \right)^{\frac{1}{4}} T^{\frac{\varepsilon}{8}} \\
\nonumber \ll&\ x^{\frac{1}{2}} (\log x)^{-B}.
\end{align}
Note that this case is an improvement over Lemma~\ref{l22}.

Assume that Condition (3) holds. Using H\"{o}lder's inequality and the mean value theorem for Dirichlet polynomials, we have
\begin{align}
\nonumber & \int_{T}^{2 T}\left|M\left(\frac{1}{2}+i t\right) N\left(\frac{1}{2}+i t\right) H\left(\frac{1}{2}+i t\right) L\left(\frac{1}{2}+i t\right) P^K\left(\frac{1}{2}+i t\right)\right| d t \\
\nonumber \ll&\ \left( \int_{T}^{2 T}\left|M\left(\frac{1}{2}+i t\right) \right|^2 \left| P^K\left(\frac{1}{2}+i t\right)\right|^2 d t \right)^{\frac{1}{2}} \left( \int_{T}^{2 T}\left|N\left(\frac{1}{2}+i t\right) \right|^4 \left|H\left(\frac{1}{2}+i t\right) \right|^2 d t \right)^{\frac{1}{4}} \\
\nonumber &\ \left( \int_{T}^{2 T}\left|L\left(\frac{1}{2}+i t\right) \right|^4 \left|H\left(\frac{1}{2}+i t\right) \right|^2 d t \right)^{\frac{1}{4}} \\
\nonumber \ll&\ x^{\frac{1}{4}} \left( \int_{T}^{2 T}\left|N\left(\frac{1}{2}+i t\right) \right|^4 \left|H\left(\frac{1}{2}+i t\right) \right|^2 d t \right)^{\frac{1}{4}} \left( \int_{T}^{2 T}\left|L\left(\frac{1}{2}+i t\right) \right|^4 \left|H\left(\frac{1}{2}+i t\right) \right|^2 d t \right)^{\frac{1}{4}}.
\end{align}
By Condition (3), Lemma~\ref{Watt} with $N = H$ and the mean value theorem for Dirichlet polynomials, we have
\begin{align}
\nonumber & \int_{T}^{2 T}\left|M\left(\frac{1}{2}+i t\right) N\left(\frac{1}{2}+i t\right) H\left(\frac{1}{2}+i t\right) L\left(\frac{1}{2}+i t\right) P^K\left(\frac{1}{2}+i t\right)\right| d t \\
\nonumber \ll&\ x^{\frac{1}{4}} \left(N^2 H + T\right)^{\frac{1}{4}} \left(T + H^2 T^{\frac{1}{2}} + H \right)^{\frac{1}{4}} T^{\frac{\varepsilon}{8}} \\
\nonumber \ll&\ x^{\frac{1}{2}} (\log x)^{-B}.
\end{align}

Now the proof of Lemma~\ref{l23} is completed.
\end{proof}

\section{The final decomposition}
Let $\mathcal{C}$ denote a finite set of positive integers, $p_{j} \sim v^{t_j}$ and put
$$
N(d)=\sum_{\substack{x<p p_1 \ldots p_K \leqslant x+x^{\frac{1}{2} + \varepsilon} \\ P < p_i \leqslant 2P}}1, \quad \mathcal{A}=\{n: 2^{-K} v<n \leqslant 2 v,\ n \text{ repeats } N(n) \text{ times}\},
$$
$$
\mathcal{B}=\{n: v<n \leqslant 2 v\}, \quad \mathcal{C}_d=\{a: a d \in \mathcal{C} \}, \quad P(z)=\prod_{p<z} p, \quad S(\mathcal{C}, z)=\sum_{\substack{a \in \mathcal{C} \\ (a, P(z))=1}} 1.
$$
Then we only need to show that $S\left(\mathcal{A},(2 v)^{\frac{1}{2}}\right) >0$. 

\textit{Buchstab's identity} is the equation
$$
S\left(\mathcal{C}, z\right) = S\left(\mathcal{C}, w\right) - \sum_{w \leqslant p < z} S\left(\mathcal{C}_{p}, p\right),
$$
where $2 \leqslant w < z$.

Our aim is to show that the sparser set $\mathcal{A}$ contains the expected proportion of primes compared to the bigger set $\mathcal{B}$, which requires us to decompose $S\left(\mathcal{A}, (2 v)^{\frac{1}{2}}\right)$ and prove asymptotic formulas of the form
\begin{equation}
S\left(\mathcal{A}, z\right) = v^{-1} x^{\frac{1}{2}+\varepsilon} \left(\sum_{P < p \leqslant 2P}\frac{1}{p}\right)^{K} (1+o(1)) S\left(\mathcal{B}, z\right)
\end{equation}
for some parts of it, and drop the other positive parts.

Let $\omega(u)$ denote the Buchstab function determined by the following differential-difference equation
\begin{align*}
\begin{cases}
\omega(u)=\frac{1}{u}, & \quad 1 \leqslant u \leqslant 2, \\
(u \omega(u))^{\prime}= \omega(u-1), & \quad u \geqslant 2 .
\end{cases}
\end{align*}
Moreover, we have the upper and lower bounds for $\omega(u)$:
\begin{align*}
\omega(u) \geqslant \omega_{0}(u) =
\begin{cases}
\frac{1}{u}, & \quad 1 \leqslant u < 2, \\
\frac{1+\log(u-1)}{u}, & \quad 2 \leqslant u < 3, \\
\frac{1+\log(u-1)}{u} + \frac{1}{u} \int_{2}^{u-1}\frac{\log(t-1)}{t} d t \geqslant 0.5607, & \quad 3 \leqslant u < 4, \\
0.5612, & \quad u \geqslant 4, \\
\end{cases}
\end{align*}
\begin{align*}
\omega(u) \leqslant \omega_{1}(u) =
\begin{cases}
\frac{1}{u}, & \quad 1 \leqslant u < 2, \\
\frac{1+\log(u-1)}{u}, & \quad 2 \leqslant u < 3, \\
\frac{1+\log(u-1)}{u} + \frac{1}{u} \int_{2}^{u-1}\frac{\log(t-1)}{t} d t \leqslant 0.5644, & \quad 3 \leqslant u < 4, \\
0.5617, & \quad u \geqslant 4. \\
\end{cases}
\end{align*}
We shall use $\omega_0(u)$ and $\omega_1(u)$ to give numerical bounds for some sieve functions discussed below. We shall also use the simple upper bound $\omega(u) \leqslant \max(\frac{1}{u}, 0.5672)$ (see Lemma 8(iii) of \cite{JiaPSV}) to estimate high-dimensional integrals.

By Prime Number Theorem with Vinogradov's error term and the inductive arguments in [\cite{HarmanBOOK}, Chapter A.2], we know that, for sufficiently large $z$,
\begin{equation}
S\left(\mathcal{B}, z\right) = \sum_{\substack{a \in \mathcal{B} \\ (a, P(z))=1}} 1 = (1+o(1)) \frac{v}{\log z} \omega\left(\frac{\log v}{\log z}\right),
\end{equation}
and we expect that the similar relation also holds for $S\left(\mathcal{A}, z\right)$:
\begin{equation}
S\left(\mathcal{A}, z\right) = \sum_{\substack{a \in \mathcal{A} \\ (a, P(z))=1}} 1 = (1+o(1)) \left(\sum_{P < p \leqslant 2P}\frac{1}{p}\right)^{K} \frac{x^{\frac{1}{2}+\varepsilon}}{\log z} \omega\left(\frac{\log v}{\log z}\right).
\end{equation}
If (1) holds for $S\left(\mathcal{A}, z\right)$, then we can deduce (3) easily from (1) and (2). Otherwise we must drop this $S\left(\mathcal{A}, z\right)$. We define the \textit{loss} from this term by the size of corresponding $S\left(\mathcal{B}, z\right)$:
\begin{equation}
S\left(\mathcal{B}, z\right) = (\textit{loss}+o(1)) \frac{v}{\log x}.
\end{equation}
We note that for the lower bound problem, we can only drop positive parts and the total loss of the dropped parts must be less than $1$.

Before decomposing, we define the asymptotic regions $\mathcal{T}_0$--$\mathcal{T}_2$, $\mathcal{I}_{n}$ $\mathcal{J}_{n}$ and a two-dimensional region $\mathcal{L}$ as
\begin{align}
\nonumber \mathcal{T}_0 (m_1, m_2) :=&\ \left\{ \frac{17}{35} \leqslant m_1 \leqslant \frac{18}{35}\ \text{ or }\ \frac{17}{35} \leqslant m_2 \leqslant \frac{18}{35}\ \text{ or }\ \frac{17}{35} \leqslant m_1 + m_2 \leqslant \frac{18}{35} \right\}, \\
\nonumber \mathcal{T}_1 (m_1, m_2) :=&\ \left\{ m_1 \leqslant \frac{18}{35},\ m_2 \leqslant \frac{9}{35} \right\}, \\
\nonumber \mathcal{T}_{21} (m_1, m_2, m_3) :=&\ \left\{ m_1 \leqslant \frac{18}{35},\ m_2 \geqslant m_3,\ \frac{3}{4} m_2 + m_3 \leqslant \frac{9}{35},\ m_2 + \frac{1}{2} m_3 \leqslant \frac{9}{35},\ \frac{7}{4} m_2 + \frac{3}{2} m_3 \leqslant \frac{18}{35} \right\}, \\
\nonumber \mathcal{T}_{22} (m_1, m_2, m_3) :=&\ \left\{ m_1 \leqslant \frac{18}{35},\ m_2 \leqslant \frac{9}{35},\ m_3 \leqslant \frac{9}{140} \right\}, \\
\nonumber \mathcal{T}_{23} (m_1, m_2, m_3) :=&\ \left\{ m_1 \leqslant \frac{18}{35},\ 2 m_2 + m_3 \leqslant \frac{18}{35},\ m_3 \leqslant \frac{9}{70} \right\}, \\
\nonumber \mathcal{T}_2 (m_1, m_2, m_3) :=&\ \mathcal{T}_{21} \cup \mathcal{T}_{22} \cup \mathcal{T}_{23}, \\
\nonumber \mathcal{I}_{i} (m_1, m_2, \ldots, m_i) :=&\ \left\{ (m_1, m_2, \ldots, m_i) \text{ can be partitioned into } (\alpha_1, \alpha_2) \in \mathcal{T}_0 \right\}, \\
\nonumber \mathcal{J}_{i} (m_1, m_2, \ldots, m_i) :=&\ \left\{ (m_1, m_2, \ldots, m_i) \text{ can be partitioned into } (\alpha_1, \alpha_2) \in \mathcal{T}_1 \text{ or } (\alpha_1, \alpha_2, \alpha_3) \in \mathcal{T}_2 \right\}, \\
\nonumber \mathcal{L} (m, n) :=&\ \left\{ (m, n) \notin \mathcal{T}_0,\ (m, n, n) \notin \mathcal{J}_3,\ n \geqslant \frac{53}{255} \text{ or } m \geqslant \frac{1129}{2448} \text{ or } \frac{1}{2} m + n \geqslant \frac{9361}{24480} \right\}.
\end{align}
By the definition of $\mathcal{J}_{i}$, if $(m_1, m_2, \ldots, m_i) \in \mathcal{J}_{i}$, then $(m_1, m_2, \ldots, m_{i-1}, l) \in \mathcal{J}_{i}$ for all $l \leqslant m_i$. The cases of reducing other variables are similar.

Now we state the general sieve asymptotic formulas we need in our final decomposition.
\begin{lemma}\label{l31}
We can give an asymptotic formula of the form (1) for
$$
\sum_{t_1 \cdots t_n} S\left(\mathcal{A}_{p_1 \cdots p_n}, v^{\frac{1}{35}} \right)
$$
and similar sums involving role-reversals if $(t_1, \ldots, t_n) \in \mathcal{J}_{n}$.
\end{lemma}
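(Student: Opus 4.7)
The plan is to follow the standard Perron-integral approach used in \cite{HarmanBOOK}, \cite{JiaLiu} and \cite{LRB5153}, reducing the estimation to the mean value bounds of Section 2.

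\textbf{Step 1 (Perron reduction).} By Perron's formula on the line $\sigma = b$ truncated at height $T_0 = x^{\frac{1}{2}-\frac{\varepsilon}{6}}$, I would write the sum as
\[
\frac{1}{2\pi i}\int_{b-iT_0}^{b+iT_0} F(s)\,\frac{(x+x^{\frac{1}{2}+\varepsilon})^s - x^s}{s}\,ds
\]
up to a negligible error, where $F(s) = P^K(s)\prod_{i=1}^{n} P_i(s)\cdot Q(s)$, with $P_i(s)=\sum_{p_i\sim v^{t_i}}p_i^{-s}$ and $Q(s)$ the $1$-bounded Dirichlet polynomial of length $\asymp v^{1-t_1-\cdots-t_n}$ corresponding to the condition $(a,P(v^{\frac{1}{35}}))=1$. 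For $|t|\leq(\log x)^{2B}$ the standard approximation $((x+y)^s-x^s)/s \approx y\,x^{s-1}$ produces the expected main term of the shape (1), once matched against the analogous computation on $\mathcal{B}$ using (2).

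\textbf{Step 2 (Heath-Brown identity).} I would apply Heath-Brown's identity of appropriate level to $Q(s)$, splitting it into $O((\log x)^{c})$ products of $1$-bounded smooth Dirichlet polynomials, with possibly one zeta-factor. After consolidating consecutive short factors, every resulting smooth factor has length in $[v^{\frac{1}{35}}, v^{\frac{18}{35}}]$, with at most one outstanding long zeta-factor.

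\textbf{Step 3 (Dyadic mean values).} For each dyadic range $(\log x)^{2B}\leq T\leq T_0$, it remains to show
\[
\int_{T}^{2T}|F(b+it)|\,dt \ll (\log x)^{-B}.
\]
Using the hypothesis $(t_1,\ldots,t_n)\in\mathcal{J}_n$, the combined exponent list (consisting of the $t_i$'s and the scaled lengths coming from Step 2) can be partitioned by multiplication into either two blocks whose exponents lie in $\mathcal{T}_1$, yielding a factorisation $F(s)=M(s)N(s)L(s)P^K(s)$ to which Lemma~\ref{l22} applies, or three blocks whose exponents lie in one of $\mathcal{T}_{21},\mathcal{T}_{22},\mathcal{T}_{23}$, yielding $F(s)=M(s)N(s)H(s)L(s)P^K(s)$ to which Lemma~\ref{l23} applies. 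The unused factor plays the role of the zeta-factor $L(s)$, and the hypothesis $\sqrt{L}\leq T$ needed by Lemmas~\ref{l22}--\ref{l23} can always be enforced by routing small smooth pieces into $M$, $N$ or $H$ when $T$ is small. The role-reversal sums are handled identically by swapping which polynomial absorbs the initial $p_i^{-s}$ mass.

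\textbf{Main obstacle.} The delicate point is the grouping in Step 3: the definition of $\mathcal{J}_n$ only guarantees a partition of the exponents $t_1, \ldots, t_n$ themselves, yet after Heath-Brown's identity there are additional smooth factors to distribute among the $\mathcal{T}$-blocks without violating the upper-bound constraints in $\mathcal{T}_1$ or $\mathcal{T}_2$. The monotonicity property recorded just after the definition of $\mathcal{J}_n$---that reducing any coordinate keeps the tuple in $\mathcal{J}_n$---is exactly what supplies the required slack: one allocates the bulk of the residual mass into the block that ultimately becomes the zeta-factor $L(s)$, where no upper constraint is imposed by the $\mathcal{T}_i$, leaving all other blocks strictly within the regions prescribed by Lemmas~\ref{l22} and~\ref{l23}.
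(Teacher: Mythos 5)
Your overall skeleton (Perron's formula, a combinatorial decomposition of the sieved variable, dyadic mean values, then a grouping of the resulting polynomials so that the mean-value lemmas of Section 2 apply) is the same route the paper takes; the paper simply delegates all of these steps to the discussions in [\cite{HeathBrownJia1718}, Lemmas 7, 9] combined with Lemma~\ref{l23}. The problem is that the two ``delicate points'' you single out are resolved incorrectly, and they are exactly the places where the cited discussions do real work. First, the factor $L(s)$ in Lemmas~\ref{l22} and~\ref{l23} is not an unconstrained leftover block: it must be a genuine zeta-factor $\sum_{l\sim L}l^{-s}$ with coefficients identically $1$, because the proofs go through the fourth moment of such a factor (Watt's theorem, Lemma~\ref{Watt}). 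So you cannot ``allocate the bulk of the residual mass into the block that ultimately becomes the zeta-factor'': the residual polynomials produced by Heath-Brown's identity carry weights (M\"obius factors, restricted coefficients), and even a product of two unweighted sums is no longer of the required shape. Those residual pieces have to be distributed among $M$, $N$ and $H$, and one must check that the upper-bound constraints of $\mathcal{T}_1$ and $\mathcal{T}_{21}$--$\mathcal{T}_{23}$ survive this distribution. The monotonicity remark after the definition of $\mathcal{J}_n$ does not supply the needed slack --- it says membership is preserved when a coordinate is \emph{decreased}, whereas attaching extra factors increases block sizes. The actual mechanism (as in [\cite{HeathBrownJia1718}, Lemmas 7, 9] and Harman's Chapter 5) is a greedy attachment of the short pieces, with the fallback that if both blocks fill up then some consolidated factor lands in the interval $[v^{\frac{17}{35}}, v^{\frac{18}{35}}]$ and the $\mathcal{T}_0$-type information of Lemma~\ref{l21} finishes the estimate; your Step 3 never invokes Lemma~\ref{l21} at all, so this case is simply not covered by your argument.

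Second, the hypothesis $\sqrt{L}\leq T$ cannot be ``enforced by routing small smooth pieces into $M$, $N$ or $H$ when $T$ is small'': $L$ is a single long unweighted sum and is not shortened by re-routing other factors, and for $T$ as small as $(\log x)^{2B}$ no such manoeuvre could make $\sqrt{L}\leq T$. The range $(\log x)^{2B}\leq T<\sqrt{L}$ has to be treated separately, by evaluating the zeta-factor explicitly (partial summation/Euler--Maclaurin), after which its contribution is either absorbed into the main term or estimated directly; this again is part of what the cited lemmas of Heath-Brown and Jia provide. In short, your plan is the right one and matches the paper's, but as written the grouping step and the small-$T$ step do not close, and these are precisely the non-trivial parts of the proof.
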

\begin{proof}
This lemma can be proved by Lemma~\ref{l23} together with the discussions in [\cite{HeathBrownJia1718}, Lemmas 7,9].
\end{proof}
\begin{lemma}\label{l32}
We can give an asymptotic formula of the form (1) for
$$
\sum_{t_1 \cdots t_n} S\left(\mathcal{A}_{p_1 \cdots p_n}, p_n \right)
$$
and similar sums involving role-reversals if $(t_1, \ldots, t_n) \in \mathcal{I}_{n}$.
\end{lemma}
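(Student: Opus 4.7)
My plan is to reproduce the Perron--contour framework of [\cite{HeathBrownJia1718}, Lemmas 7 and 9] with Lemma~\ref{l21} now playing the role of Type-II input. Perron's formula expresses the difference
$$
\sum_{t_1 \cdots t_n} S(\mathcal{A}_{p_1\cdots p_n},p_n) - \frac{x^{1/2+\varepsilon}}{v}\Bigl(\sum_{P<p\leqslant 2P}\tfrac{1}{p}\Bigr)^K \sum_{t_1 \cdots t_n} S(\mathcal{B}_{p_1\cdots p_n},p_n)
$$
(up to terms absorbable by standard zero-density estimates and trivial contributions from $|t|\leqslant(\log x)^{2B}$) as a contour integral, in the range $(\log x)^{2B}\leqslant|t|\leqslant x^{1/2-\varepsilon/6}$, of
$$
F(b+it)=P^K(b+it)\,P_1(b+it)\cdots P_n(b+it)\,R(b+it),
$$
where $P_j(s)$ is supported on primes $p_j\sim v^{t_j}$ and $R(s)=\sum a^{-s}$ is supported on integers $a\sim v^{1-t_1-\cdots-t_n}$ whose prime factors all exceed $p_n$. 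Dyadic summation reduces everything to proving $\int_T^{2T}|F(b+it)|\,dt\ll(\log x)^{-B}$ for each such $T$.

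Next I would apply Heath-Brown's identity of order $k\geqslant\lceil 2/\varepsilon\rceil$ to $R(s)$ and, after the standard length-splitting trick (cf.\ [\cite{HarmanBOOK}, Chapter 3]), rewrite $F$ as a bounded sum of products of $P^K$, the prime polynomials $P_1,\ldots,P_n$, and zeta-factors $L_1,\ldots,L_r$ each of length $\leqslant v^{1/35}$. The hypothesis $(t_1,\ldots,t_n)\in\mathcal{I}_n$ supplies a partition $\{t_j\}=A_1\sqcup A_2$ whose exponent sums $\alpha_1,\alpha_2$ satisfy $(\alpha_1,\alpha_2)\in\mathcal{T}_0$. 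If $\alpha_1\in[17/35,18/35]$ I set $M(s)=\prod_{j\in A_1}P_j(s)$ and let $N(s)$ be the product of all remaining factors, so that automatically $MN=v$ and $v^{17/35}\ll M\ll v^{18/35}$; the case $\alpha_2\in[17/35,18/35]$ is symmetric. If only $\alpha_1+\alpha_2\in[17/35,18/35]$ holds, I take $M=\prod_jP_j$ and $N=\prod_iL_i$, so that $N$ has length $v^{1-\alpha_1-\alpha_2}\in[v^{17/35},v^{18/35}]$ and the roles of $M,N$ in Lemma~\ref{l21} are simply relabelled. In each subcase Lemma~\ref{l21} applies directly to $M(b+it)N(b+it)P^K(b+it)$ and produces the desired $(\log x)^{-B}$ bound; summation over the $O_\varepsilon((\log x)^c)$ Heath-Brown branches is absorbed by enlarging $B$.

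The role-reversal variants are handled in the same way: after the usual reciprocal-sieve manipulation each such sum still has the form of a contour integral of a product $P^K \prod P_j\cdot R$, and the factor structure is controlled by exactly the same $\mathcal{T}_0$-combinatorics — only the identity of the ``prime'' factors changes. The main obstacle is keeping the Heath-Brown bookkeeping consistent with the length-splitting, i.e.\ ensuring that every zeta-factor can be forced to size $\leqslant v^{1/35}$ without disturbing the chosen $\mathcal{T}_0$-partition of the $t_j$'s; this is routine because each individual zeta-factor is much shorter than the width $1/35$ of the Type-II window $[17/35,18/35]$, so it can be freely attached to either $M$ or $N$ without crossing the window's endpoints.
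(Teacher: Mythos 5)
Your overall route --- Perron's formula, dyadic reduction to mean values over $(\log x)^{2B}\leqslant T\leqslant x^{\frac{1}{2}-\frac{\varepsilon}{6}}$, then grouping the polynomials according to the partition supplied by $(t_1,\ldots,t_n)\in\mathcal{I}_n$ and invoking Lemma~\ref{l21} --- is essentially the paper's, which simply quotes Lemma~\ref{l21} together with the discussion in [\cite{HeathBrownJia1718}, Lemma 8] (note it is Lemma 8 of that paper, not Lemmas 7 and 9, that treats sums sifted up to $p_n$; the latter underlie Lemma~\ref{l31}).

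However, the middle step of your write-up is incorrect as stated, even though it is also unnecessary. You propose to apply Heath-Brown's identity to $R(s)$, the polynomial over integers $a\sim v^{1-t_1-\cdots-t_n}$ all of whose prime factors exceed $p_n$, and to force every resulting zeta-factor to have length $\leqslant v^{\frac{1}{35}}$. That decomposition is the mechanism available for Lemma~\ref{l31}, where the sifting level is the fixed small power $v^{\frac{1}{35}}$; here the sifting level $p_n$ may be a large power of $v$, so the sifting condition admits no such expansion, and Heath-Brown's identity applied to the (boundedly many) prime factors of $a$ produces M\"obius-weighted factors and possibly long smooth factors rather than a bounded family of genuine short zeta-factors --- indeed, if such a decomposition existed one could also feed Type I information (Lemmas~\ref{l22}, \ref{l23}) into these sums, which is exactly what one cannot do and why the hypothesis $(t_1,\ldots,t_n)\in\mathcal{I}_n$ is required at all. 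The correct and simpler mechanism, as in [\cite{HeathBrownJia1718}, Lemma 8], is to decompose nothing: Lemma~\ref{l21} permits arbitrary $1$-bounded coefficients in both factors, so one takes $M(s)$ to be the subproduct of the $P_j(s)$ whose exponent sum lies in $[\frac{17}{35},\frac{18}{35}]$ (or the product of all of them in the third $\mathcal{T}_0$ alternative) and absorbs $R(s)$ together with the remaining prime polynomials into $N(s)$. With that correction --- and noting that no zero-density input is involved, only the mean-value bound on the stated $t$-range and trivial estimates elsewhere --- your case analysis of the three $\mathcal{T}_0$ alternatives and your remark on the role-reversed sums do reproduce the intended argument.
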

\begin{proof}
This lemma can be proved by Lemma~\ref{l21} together with the discussions in [\cite{HeathBrownJia1718}, Lemma 8].
\end{proof}

We start our final decomposition of $S\left(\mathcal{A}, (2 v)^{\frac{1}{2}}\right)$. By Buchstab's identity, we have
\begin{align}
\nonumber S\left(\mathcal{A}, (2 v)^{\frac{1}{2}}\right) =&\ S\left(\mathcal{A}, v^{\frac{1}{35}}\right) - \sum_{\frac{1}{35} \leqslant t_1 < \frac{17}{35}} S\left(\mathcal{A}_{p_1}, p_1\right) - \sum_{\frac{17}{35} \leqslant t_1 < \frac{1}{2}} S\left(\mathcal{A}_{p_1}, p_1\right) \\
\nonumber =&\ S\left(\mathcal{A}, v^{\frac{1}{35}}\right) - \sum_{\frac{1}{35} \leqslant t_1 < \frac{17}{35}} S\left(\mathcal{A}_{p_1}, v^{\frac{1}{35}}\right) - \sum_{\frac{17}{35} \leqslant t_1 < \frac{1}{2}} S\left(\mathcal{A}_{p_1}, p_1\right) \\
\nonumber &+ \sum_{\substack{\frac{1}{35} \leqslant t_1 < \frac{17}{35} \\ \frac{1}{35} \leqslant t_2 < \min \left(t_1, \frac{1}{2}(1 - t_1) \right) }} S\left(\mathcal{A}_{p_1 p_2}, p_2\right) \\
=&\ S_1 - S_2 - S_3 + S_4.
\end{align}
By Lemma~\ref{l21} and Lemma~\ref{l22}, we can give asymptotic formulas for $S_1$, $S_2$ and $S_3$.

Before estimating $S_{4}$, we first split it into three parts:
\begin{align}
\nonumber S_4 =&\ \sum_{\substack{\frac{1}{35} \leqslant t_1 < \frac{17}{35} \\ \frac{1}{35} \leqslant t_2 < \min \left(t_1, \frac{1}{2}(1 - t_1) \right) }} S\left(\mathcal{A}_{p_1 p_2}, p_2\right) \\
\nonumber =&\ \sum_{\substack{\frac{1}{35} \leqslant t_1 < \frac{17}{35} \\ \frac{1}{35} \leqslant t_2 < \min \left(t_1, \frac{1}{2}(1 - t_1) \right) \\ (t_1, t_2) \in \mathcal{T}_0 }} S\left(\mathcal{A}_{p_1 p_2}, p_2\right) + \sum_{\substack{\frac{1}{35} \leqslant t_1 < \frac{17}{35} \\ \frac{1}{35} \leqslant t_2 < \min \left(t_1, \frac{1}{2}(1 - t_1) \right)  \\ (t_1, t_2) \in \mathcal{L} }} S\left(\mathcal{A}_{p_1 p_2}, p_2\right) \\
\nonumber &+ \sum_{\substack{\frac{1}{35} \leqslant t_1 < \frac{17}{35} \\ \frac{1}{35} \leqslant t_2 < \min \left(t_1, \frac{1}{2}(1 - t_1) \right) \\ (t_1, t_2) \notin \mathcal{T}_0 \\ (t_1, t_2, t_2) \in \mathcal{J}_3 }} S\left(\mathcal{A}_{p_1 p_2}, p_2\right) + \sum_{\substack{\frac{1}{35} \leqslant t_1 < \frac{17}{35} \\ \frac{1}{35} \leqslant t_2 < \min \left(t_1, \frac{1}{2}(1 - t_1) \right) \\ (t_1, t_2) \notin \mathcal{T}_0 \cup \mathcal{L} \\ (t_1, t_2, t_2) \notin \mathcal{J}_3 }} S\left(\mathcal{A}_{p_1 p_2}, p_2\right) \\
\nonumber =&\ \sum_{\substack{\frac{1}{35} \leqslant t_1 < \frac{17}{35} \\ \frac{1}{35} \leqslant t_2 < \min \left(t_1, \frac{1}{2}(1 - t_1) \right) \\ (t_1, t_2) \in \mathcal{T}_0 }} S\left(\mathcal{A}_{p_1 p_2}, p_2\right) + \sum_{\substack{\frac{1}{35} \leqslant t_1 < \frac{17}{35} \\ \frac{1}{35} \leqslant t_2 < \min \left(t_1, \frac{1}{2}(1 - t_1) \right) \\ (t_1, t_2) \in \mathcal{L} }} S\left(\mathcal{A}_{p_1 p_2}, p_2\right) \\
\nonumber &+ \sum_{\substack{\frac{1}{35} \leqslant t_1 < \frac{17}{35} \\ \frac{1}{35} \leqslant t_2 < \min \left(t_1, \frac{1}{2}(1 - t_1) \right) \\ (t_1, t_2) \in \mathcal{M} }} S\left(\mathcal{A}_{p_1 p_2}, p_2\right) + \sum_{\substack{\frac{1}{35} \leqslant t_1 < \frac{17}{35} \\ \frac{1}{35} \leqslant t_2 < \min \left(t_1, \frac{1}{2}(1 - t_1) \right) \\ (t_1, t_2) \in \mathcal{N} }} S\left(\mathcal{A}_{p_1 p_2}, p_2\right) \\
=&\ S_{41} + S_{42} + S_{43} + S_{44},
\end{align}
where
\begin{align}
\nonumber \mathcal{M} (m, n) :=&\ \left\{ (m, n) \notin \mathcal{T}_0,\ (m, n, n) \in \mathcal{J}_3 \right\}, \\
\nonumber \mathcal{N} (m, n) :=&\ \left\{ (m, n) \notin \mathcal{T}_0 \cup \mathcal{L},\ (m, n, n) \notin \mathcal{J}_3 \right\}.
\end{align}

By Lemma~\ref{l32}, $S_{41}$ has an asymptotic formula. For $S_{42}$, we cannot decompose further but have to discard the whole region giving the loss
\begin{equation}
\int_{\frac{1}{35}}^{\frac{17}{35}} \int_{\frac{1}{35}}^{\min\left(t_1, \frac{1-t_1}{2}\right)} \mathbbm{1}_{(t_1, t_2) \in \mathcal{L}} \frac{\omega\left(\frac{1 - t_1 - t_2}{t_2}\right)}{t_1 t_2^2} d t_2 d t_1 < 0.7226.
\end{equation}

For $S_{43}$ we can use Buchstab’s identity to get
\begin{align}
\nonumber S_{43} =&\ \sum_{\substack{\frac{1}{35} \leqslant t_1 < \frac{17}{35} \\ \frac{1}{35} \leqslant t_2 < \min \left(t_1, \frac{1}{2}(1 - t_1) \right) \\ (t_1, t_2) \in \mathcal{M} }} S\left(\mathcal{A}_{p_1 p_2}, p_2\right) \\
\nonumber =&\ \sum_{\substack{\frac{1}{35} \leqslant t_1 < \frac{17}{35} \\ \frac{1}{35} \leqslant t_2 < \min \left(t_1, \frac{1}{2}(1 - t_1) \right) \\ (t_1, t_2) \in \mathcal{M} }} S\left(\mathcal{A}_{p_1 p_2}, v^{\frac{1}{35}}\right) - \sum_{\substack{\frac{1}{35} \leqslant t_1 < \frac{17}{35} \\ \frac{1}{35} \leqslant t_2 < \min \left(t_1, \frac{1}{2}(1 - t_1) \right) \\ (t_1, t_2) \in \mathcal{M} \\ \frac{1}{35} \leqslant t_3 < \min \left(t_2, \frac{1}{2}(1 - t_1 - t_2) \right) \\  (t_1, t_2, t_3) \in \mathcal{I}_3 }} S\left(\mathcal{A}_{p_1 p_2 p_3}, p_3 \right) \\
\nonumber &- \sum_{\substack{\frac{1}{35} \leqslant t_1 < \frac{17}{35} \\ \frac{1}{35} \leqslant t_2 < \min \left(t_1, \frac{1}{2}(1 - t_1) \right) \\ (t_1, t_2) \in \mathcal{M} \\ \frac{1}{35} \leqslant t_3 < \min \left(t_2, \frac{1}{2}(1 - t_1 - t_2) \right) \\  (t_1, t_2, t_3) \notin \mathcal{I}_3 }} S\left(\mathcal{A}_{p_1 p_2 p_3}, p_3 \right) \\
\nonumber =&\ \sum_{\substack{\frac{1}{35} \leqslant t_1 < \frac{17}{35} \\ \frac{1}{35} \leqslant t_2 < \min \left(t_1, \frac{1}{2}(1 - t_1) \right) \\ (t_1, t_2) \in \mathcal{M} }} S\left(\mathcal{A}_{p_1 p_2}, v^{\frac{1}{35}}\right) - \sum_{\substack{\frac{1}{35} \leqslant t_1 < \frac{17}{35} \\ \frac{1}{35} \leqslant t_2 < \min \left(t_1, \frac{1}{2}(1 - t_1) \right) \\ (t_1, t_2) \in \mathcal{M} \\ \frac{1}{35} \leqslant t_3 < \min \left(t_2, \frac{1}{2}(1 - t_1 - t_2) \right) \\  (t_1, t_2, t_3) \in \mathcal{I}_3 }} S\left(\mathcal{A}_{p_1 p_2 p_3}, p_3 \right) \\
\nonumber &- \sum_{\substack{\frac{1}{35} \leqslant t_1 < \frac{17}{35} \\ \frac{1}{35} \leqslant t_2 < \min \left(t_1, \frac{1}{2}(1 - t_1) \right) \\ (t_1, t_2) \in \mathcal{M} \\ \frac{1}{35} \leqslant t_3 < \min \left(t_2, \frac{1}{2}(1 - t_1 - t_2) \right) \\ (t_1, t_2, t_3) \notin \mathcal{I}_3 }} S\left(\mathcal{A}_{p_1 p_2 p_3}, v^{\frac{1}{35}}\right) + \sum_{\substack{\frac{1}{35} \leqslant t_1 < \frac{17}{35} \\ \frac{1}{35} \leqslant t_2 < \min \left(t_1, \frac{1}{2}(1 - t_1) \right) \\ (t_1, t_2) \in \mathcal{M} \\ \frac{1}{35} \leqslant t_3 < \min \left(t_2, \frac{1}{2}(1 - t_1 - t_2) \right) \\ (t_1, t_2, t_3) \notin \mathcal{I}_3 \\ \frac{1}{35} \leqslant t_4 < \min \left(t_3, \frac{1}{2}(1 - t_1 - t_2 - t_3) \right) \\ (t_1, t_2, t_3, t_4) \in \mathcal{I}_4 }} S\left(\mathcal{A}_{p_1 p_2 p_3 p_4}, p_4 \right) \\
\nonumber &+ \sum_{\substack{\frac{1}{35} \leqslant t_1 < \frac{17}{35} \\ \frac{1}{35} \leqslant t_2 < \min \left(t_1, \frac{1}{2}(1 - t_1) \right) \\ (t_1, t_2) \in \mathcal{M} \\ \frac{1}{35} \leqslant t_3 < \min \left(t_2, \frac{1}{2}(1 - t_1 - t_2) \right) \\ (t_1, t_2, t_3) \notin \mathcal{I}_3 \\ \frac{1}{35} \leqslant t_4 < \min \left(t_3, \frac{1}{2}(1 - t_1 - t_2 - t_3) \right) \\ (t_1, t_2, t_3, t_4) \notin \mathcal{I}_4 }} S\left(\mathcal{A}_{p_1 p_2 p_3 p_4}, p_4 \right) \\
=&\ S_{431} - S_{432} - S_{433} + S_{434} + S_{435}.
\end{align}
Since $(t_1, t_2) \in \mathcal{M}$, we have asymptotic formulas for $S_{431}$ and $S_{433}$ by Lemma~\ref{l31}. We also have asymptotic formulas for $S_{432}$ and $S_{434}$ by Lemma~\ref{l32}.

For the remaining $S_{435}$, we first split it into three parts:
\begin{align}
\nonumber S_{435} =&\ \sum_{\substack{\frac{1}{35} \leqslant t_1 < \frac{17}{35} \\ \frac{1}{35} \leqslant t_2 < \min \left(t_1, \frac{1}{2}(1 - t_1) \right) \\ (t_1, t_2) \in \mathcal{M} \\ \frac{1}{35} \leqslant t_3 < \min \left(t_2, \frac{1}{2}(1 - t_1 - t_2) \right) \\ (t_1, t_2, t_3) \notin \mathcal{I}_3 \\ \frac{1}{35} \leqslant t_4 < \min \left(t_3, \frac{1}{2}(1 - t_1 - t_2 - t_3) \right) \\ (t_1, t_2, t_3, t_4) \notin \mathcal{I}_4 }} S\left(\mathcal{A}_{p_1 p_2 p_3 p_4}, p_4 \right) \\
\nonumber =&\ \sum_{\substack{\frac{1}{35} \leqslant t_1 < \frac{17}{35} \\ \frac{1}{35} \leqslant t_2 < \min \left(t_1, \frac{1}{2}(1 - t_1) \right) \\ (t_1, t_2) \in \mathcal{M} \\ \frac{1}{35} \leqslant t_3 < \min \left(t_2, \frac{1}{2}(1 - t_1 - t_2) \right) \\ (t_1, t_2, t_3) \notin \mathcal{I}_3 \\ \frac{1}{35} \leqslant t_4 < \min \left(t_3, \frac{1}{2}(1 - t_1 - t_2 - t_3) \right) \\ (t_1, t_2, t_3, t_4) \notin \mathcal{I}_4 \\ (t_1, t_2, t_3, t_4, t_4) \in \mathcal{J}_5 }} S\left(\mathcal{A}_{p_1 p_2 p_3 p_4}, p_4 \right) \\
\nonumber &+ \sum_{\substack{\frac{1}{35} \leqslant t_1 < \frac{17}{35} \\ \frac{1}{35} \leqslant t_2 < \min \left(t_1, \frac{1}{2}(1 - t_1) \right) \\ (t_1, t_2) \in \mathcal{M} \\ \frac{1}{35} \leqslant t_3 < \min \left(t_2, \frac{1}{2}(1 - t_1 - t_2) \right) \\ (t_1, t_2, t_3) \notin \mathcal{I}_3 \\ \frac{1}{35} \leqslant t_4 < \min \left(t_3, \frac{1}{2}(1 - t_1 - t_2 - t_3) \right) \\ (t_1, t_2, t_3, t_4) \notin \mathcal{I}_4 \\ (t_1, t_2, t_3, t_4, t_4) \notin \mathcal{J}_5 \\ (t_1, t_2, t_3, t_4) \in \mathcal{J}_4 \text{ and } (1 - t_1 - t_2 - t_3 - t_4, t_2, t_3, t_4) \in \mathcal{J}_4 }} S\left(\mathcal{A}_{p_1 p_2 p_3 p_4}, p_4 \right) \\
\nonumber &+ \sum_{\substack{\frac{1}{35} \leqslant t_1 < \frac{17}{35} \\ \frac{1}{35} \leqslant t_2 < \min \left(t_1, \frac{1}{2}(1 - t_1) \right) \\ (t_1, t_2) \in \mathcal{M} \\ \frac{1}{35} \leqslant t_3 < \min \left(t_2, \frac{1}{2}(1 - t_1 - t_2) \right) \\ (t_1, t_2, t_3) \notin \mathcal{I}_3 \\ \frac{1}{35} \leqslant t_4 < \min \left(t_3, \frac{1}{2}(1 - t_1 - t_2 - t_3) \right) \\ (t_1, t_2, t_3, t_4) \notin \mathcal{I}_4 \\ (t_1, t_2, t_3, t_4, t_4) \notin \mathcal{J}_5 \\ \text{either } (t_1, t_2, t_3, t_4) \notin \mathcal{J}_4 \text{ or } (1 - t_1 - t_2 - t_3 - t_4, t_2, t_3, t_4) \notin \mathcal{J}_4 }} S\left(\mathcal{A}_{p_1 p_2 p_3 p_4}, p_4 \right) \\
=&\ S_{4351} + S_{4352} + S_{4353}.
\end{align}

We have three ways to get more possible savings corresponding to each sum above: For $S_{4351}$ we can use Buchstab's identity twice more in a straightforward manner to get
\begin{align}
\nonumber S_{4351} =&\ \sum_{\substack{\frac{1}{35} \leqslant t_1 < \frac{17}{35} \\ \frac{1}{35} \leqslant t_2 < \min \left(t_1, \frac{1}{2}(1 - t_1) \right) \\ (t_1, t_2) \in \mathcal{M} \\ \frac{1}{35} \leqslant t_3 < \min \left(t_2, \frac{1}{2}(1 - t_1 - t_2) \right) \\ (t_1, t_2, t_3) \notin \mathcal{I}_3 \\ \frac{1}{35} \leqslant t_4 < \min \left(t_3, \frac{1}{2}(1 - t_1 - t_2 - t_3) \right) \\ (t_1, t_2, t_3, t_4) \notin \mathcal{I}_4 \\ (t_1, t_2, t_3, t_4, t_4) \in \mathcal{J}_5 }} S\left(\mathcal{A}_{p_1 p_2 p_3 p_4}, p_4 \right) \\
\nonumber =&\ \sum_{\substack{\frac{1}{35} \leqslant t_1 < \frac{17}{35} \\ \frac{1}{35} \leqslant t_2 < \min \left(t_1, \frac{1}{2}(1 - t_1) \right) \\ (t_1, t_2) \in \mathcal{M} \\ \frac{1}{35} \leqslant t_3 < \min \left(t_2, \frac{1}{2}(1 - t_1 - t_2) \right) \\ (t_1, t_2, t_3) \notin \mathcal{I}_3 \\ \frac{1}{35} \leqslant t_4 < \min \left(t_3, \frac{1}{2}(1 - t_1 - t_2 - t_3) \right) \\ (t_1, t_2, t_3, t_4) \notin \mathcal{I}_4 \\ (t_1, t_2, t_3, t_4, t_4) \in \mathcal{J}_5 }} S\left(\mathcal{A}_{p_1 p_2 p_3 p_4}, v^{\frac{1}{35}} \right) - \sum_{\substack{\frac{1}{35} \leqslant t_1 < \frac{17}{35} \\ \frac{1}{35} \leqslant t_2 < \min \left(t_1, \frac{1}{2}(1 - t_1) \right) \\ (t_1, t_2) \in \mathcal{M} \\ \frac{1}{35} \leqslant t_3 < \min \left(t_2, \frac{1}{2}(1 - t_1 - t_2) \right) \\ (t_1, t_2, t_3) \notin \mathcal{I}_3 \\ \frac{1}{35} \leqslant t_4 < \min \left(t_3, \frac{1}{2}(1 - t_1 - t_2 - t_3) \right) \\ (t_1, t_2, t_3, t_4) \notin \mathcal{I}_4 \\ (t_1, t_2, t_3, t_4, t_4) \in \mathcal{J}_5 \\ \frac{1}{35} \leqslant t_5 < \min \left(t_4, \frac{1}{2}(1 - t_1 - t_2 - t_3 - t_4) \right) \\ (t_1, t_2, t_3, t_4, t_5) \in \mathcal{I}_5 }} S\left(\mathcal{A}_{p_1 p_2 p_3 p_4 p_5}, p_5 \right) \\
\nonumber &- \sum_{\substack{\frac{1}{35} \leqslant t_1 < \frac{17}{35} \\ \frac{1}{35} \leqslant t_2 < \min \left(t_1, \frac{1}{2}(1 - t_1) \right) \\ (t_1, t_2) \in \mathcal{M} \\ \frac{1}{35} \leqslant t_3 < \min \left(t_2, \frac{1}{2}(1 - t_1 - t_2) \right) \\ (t_1, t_2, t_3) \notin \mathcal{I}_3 \\ \frac{1}{35} \leqslant t_4 < \min \left(t_3, \frac{1}{2}(1 - t_1 - t_2 - t_3) \right) \\ (t_1, t_2, t_3, t_4) \notin \mathcal{I}_4 \\ (t_1, t_2, t_3, t_4, t_4) \in \mathcal{J}_5 \\ \frac{1}{35} \leqslant t_5 < \min \left(t_4, \frac{1}{2}(1 - t_1 - t_2 - t_3 - t_4) \right) \\ (t_1, t_2, t_3, t_4, t_5) \notin \mathcal{I}_5 }} S\left(\mathcal{A}_{p_1 p_2 p_3 p_4 p_5}, p_5 \right) \\
\nonumber =&\ \sum_{\substack{\frac{1}{35} \leqslant t_1 < \frac{17}{35} \\ \frac{1}{35} \leqslant t_2 < \min \left(t_1, \frac{1}{2}(1 - t_1) \right) \\ (t_1, t_2) \in \mathcal{M} \\ \frac{1}{35} \leqslant t_3 < \min \left(t_2, \frac{1}{2}(1 - t_1 - t_2) \right) \\ (t_1, t_2, t_3) \notin \mathcal{I}_3 \\ \frac{1}{35} \leqslant t_4 < \min \left(t_3, \frac{1}{2}(1 - t_1 - t_2 - t_3) \right) \\ (t_1, t_2, t_3, t_4) \notin \mathcal{I}_4 \\ (t_1, t_2, t_3, t_4, t_4) \in \mathcal{J}_5 }} S\left(\mathcal{A}_{p_1 p_2 p_3 p_4}, v^{\frac{1}{35}} \right) - \sum_{\substack{\frac{1}{35} \leqslant t_1 < \frac{17}{35} \\ \frac{1}{35} \leqslant t_2 < \min \left(t_1, \frac{1}{2}(1 - t_1) \right) \\ (t_1, t_2) \in \mathcal{M} \\ \frac{1}{35} \leqslant t_3 < \min \left(t_2, \frac{1}{2}(1 - t_1 - t_2) \right) \\ (t_1, t_2, t_3) \notin \mathcal{I}_3 \\ \frac{1}{35} \leqslant t_4 < \min \left(t_3, \frac{1}{2}(1 - t_1 - t_2 - t_3) \right) \\ (t_1, t_2, t_3, t_4) \notin \mathcal{I}_4 \\ (t_1, t_2, t_3, t_4, t_4) \in \mathcal{J}_5 \\ \frac{1}{35} \leqslant t_5 < \min \left(t_4, \frac{1}{2}(1 - t_1 - t_2 - t_3 - t_4) \right) \\ (t_1, t_2, t_3, t_4, t_5) \in \mathcal{I}_5 }} S\left(\mathcal{A}_{p_1 p_2 p_3 p_4 p_5}, p_5 \right) \\
\nonumber &- \sum_{\substack{\frac{1}{35} \leqslant t_1 < \frac{17}{35} \\ \frac{1}{35} \leqslant t_2 < \min \left(t_1, \frac{1}{2}(1 - t_1) \right) \\ (t_1, t_2) \in \mathcal{M} \\ \frac{1}{35} \leqslant t_3 < \min \left(t_2, \frac{1}{2}(1 - t_1 - t_2) \right) \\ (t_1, t_2, t_3) \notin \mathcal{I}_3 \\ \frac{1}{35} \leqslant t_4 < \min \left(t_3, \frac{1}{2}(1 - t_1 - t_2 - t_3) \right) \\ (t_1, t_2, t_3, t_4) \notin \mathcal{I}_4 \\ (t_1, t_2, t_3, t_4, t_4) \in \mathcal{J}_5 \\ \frac{1}{35} \leqslant t_5 < \min \left(t_4, \frac{1}{2}(1 - t_1 - t_2 - t_3 - t_4) \right) \\ (t_1, t_2, t_3, t_4, t_5) \notin \mathcal{I}_5 }} S\left(\mathcal{A}_{p_1 p_2 p_3 p_4 p_5}, v^{\frac{1}{35}} \right) \\
\nonumber &+ \sum_{\substack{\frac{1}{35} \leqslant t_1 < \frac{17}{35} \\ \frac{1}{35} \leqslant t_2 < \min \left(t_1, \frac{1}{2}(1 - t_1) \right) \\ (t_1, t_2) \in \mathcal{M} \\ \frac{1}{35} \leqslant t_3 < \min \left(t_2, \frac{1}{2}(1 - t_1 - t_2) \right) \\ (t_1, t_2, t_3) \notin \mathcal{I}_3 \\ \frac{1}{35} \leqslant t_4 < \min \left(t_3, \frac{1}{2}(1 - t_1 - t_2 - t_3) \right) \\ (t_1, t_2, t_3, t_4) \notin \mathcal{I}_4 \\ (t_1, t_2, t_3, t_4, t_4) \in \mathcal{J}_5 \\ \frac{1}{35} \leqslant t_5 < \min \left(t_4, \frac{1}{2}(1 - t_1 - t_2 - t_3 - t_4) \right) \\ (t_1, t_2, t_3, t_4, t_5) \notin \mathcal{I}_5 \\ \frac{1}{35} \leqslant t_6 < \min \left(t_5, \frac{1}{2}(1 - t_1 - t_2 - t_3 - t_4 - t_5) \right) \\ (t_1, t_2, t_3, t_4, t_5, t_6) \in \mathcal{I}_6 }} S\left(\mathcal{A}_{p_1 p_2 p_3 p_4 p_5 p_6}, p_6 \right) \\
\nonumber &+ \sum_{\substack{\frac{1}{35} \leqslant t_1 < \frac{17}{35} \\ \frac{1}{35} \leqslant t_2 < \min \left(t_1, \frac{1}{2}(1 - t_1) \right) \\ (t_1, t_2) \in \mathcal{M} \\ \frac{1}{35} \leqslant t_3 < \min \left(t_2, \frac{1}{2}(1 - t_1 - t_2) \right) \\ (t_1, t_2, t_3) \notin \mathcal{I}_3 \\ \frac{1}{35} \leqslant t_4 < \min \left(t_3, \frac{1}{2}(1 - t_1 - t_2 - t_3) \right) \\ (t_1, t_2, t_3, t_4) \notin \mathcal{I}_4 \\ (t_1, t_2, t_3, t_4, t_4) \in \mathcal{J}_5 \\ \frac{1}{35} \leqslant t_5 < \min \left(t_4, \frac{1}{2}(1 - t_1 - t_2 - t_3 - t_4) \right) \\ (t_1, t_2, t_3, t_4, t_5) \notin \mathcal{I}_5 \\ \frac{1}{35} \leqslant t_6 < \min \left(t_5, \frac{1}{2}(1 - t_1 - t_2 - t_3 - t_4 - t_5) \right) \\ (t_1, t_2, t_3, t_4, t_5, t_6) \notin \mathcal{I}_6 }} S\left(\mathcal{A}_{p_1 p_2 p_3 p_4 p_5 p_6}, p_6 \right) \\
=&\ S_{43511} - S_{43512} - S_{43513} + S_{43514} + S_{43515}.
\end{align}
We have asymptotic formulas for $S_{43512}$ and $S_{43514}$ by Lemma~\ref{l32}. Since we have $(t_1, t_2, t_3, t_4, t_4) \in \mathcal{J}_5$ in all five sums above, we also have $(t_1, t_2, t_3, t_4) \in \mathcal{J}_4$ and $(t_1, t_2, t_3, t_4, t_5) \in \mathcal{J}_5$. Now we have asymptotic formulas for $S_{43511}$ and $S_{43513}$ by Lemma~\ref{l31}. For $S_{43515}$ we can decompose those parts with $(t_1, t_2, t_3, t_4, t_5, t_6, t_6) \in \mathcal{J}_7$ in a similar way to reach an eight-dimensional sum
\begin{equation}
\sum_{\substack{\frac{1}{35} \leqslant t_1 < \frac{17}{35} \\ \frac{1}{35} \leqslant t_2 < \min \left(t_1, \frac{1}{2}(1 - t_1) \right) \\ (t_1, t_2) \in \mathcal{M} \\ \frac{1}{35} \leqslant t_3 < \min \left(t_2, \frac{1}{2}(1 - t_1 - t_2) \right) \\ (t_1, t_2, t_3) \notin \mathcal{I}_3 \\ \frac{1}{35} \leqslant t_4 < \min \left(t_3, \frac{1}{2}(1 - t_1 - t_2 - t_3) \right) \\ (t_1, t_2, t_3, t_4) \notin \mathcal{I}_4 \\ (t_1, t_2, t_3, t_4, t_4) \in \mathcal{J}_5 \\ \frac{1}{35} \leqslant t_5 < \min \left(t_4, \frac{1}{2}(1 - t_1 - t_2 - t_3 - t_4) \right) \\ (t_1, t_2, t_3, t_4, t_5) \notin \mathcal{I}_5 \\ \frac{1}{35} \leqslant t_6 < \min \left(t_5, \frac{1}{2}(1 - t_1 - t_2 - t_3 - t_4 - t_5) \right) \\ (t_1, t_2, t_3, t_4, t_5, t_6) \notin \mathcal{I}_6 \\ (t_1, t_2, t_3, t_4, t_5, t_6, t_6) \in \mathcal{J}_7 \\ \frac{1}{35} \leqslant t_7 < \min \left(t_6, \frac{1}{2}(1 - t_1 - t_2 - t_3 - t_4 - t_5 - t_6) \right) \\ (t_1, t_2, t_3, t_4, t_5, t_6, t_7) \notin \mathcal{I}_7 \\ \frac{1}{35} \leqslant t_8 < \min \left(t_7, \frac{1}{2}(1 - t_1 - t_2 - t_3 - t_4 - t_5 - t_6 - t_7) \right) \\ (t_1, t_2, t_3, t_4, t_5, t_6, t_7, t_8) \notin \mathcal{I}_8 }} S\left(\mathcal{A}_{p_1 p_2 p_3 p_4 p_5 p_6 p_7 p_8}, p_8 \right),
\end{equation}
and we discard the remaining parts where $(t_1, t_2, t_3, t_4, t_5, t_6, t_6) \notin \mathcal{J}_7$:
\begin{equation}
\sum_{\substack{\frac{1}{35} \leqslant t_1 < \frac{17}{35} \\ \frac{1}{35} \leqslant t_2 < \min \left(t_1, \frac{1}{2}(1 - t_1) \right) \\ (t_1, t_2) \in \mathcal{M} \\ \frac{1}{35} \leqslant t_3 < \min \left(t_2, \frac{1}{2}(1 - t_1 - t_2) \right) \\ (t_1, t_2, t_3) \notin \mathcal{I}_3 \\ \frac{1}{35} \leqslant t_4 < \min \left(t_3, \frac{1}{2}(1 - t_1 - t_2 - t_3) \right) \\ (t_1, t_2, t_3, t_4) \notin \mathcal{I}_4 \\ (t_1, t_2, t_3, t_4, t_4) \in \mathcal{J}_5 \\ \frac{1}{35} \leqslant t_5 < \min \left(t_4, \frac{1}{2}(1 - t_1 - t_2 - t_3 - t_4) \right) \\ (t_1, t_2, t_3, t_4, t_5) \notin \mathcal{I}_5 \\ \frac{1}{35} \leqslant t_6 < \min \left(t_5, \frac{1}{2}(1 - t_1 - t_2 - t_3 - t_4 - t_5) \right) \\ (t_1, t_2, t_3, t_4, t_5, t_6) \notin \mathcal{I}_6 \\ (t_1, t_2, t_3, t_4, t_5, t_6, t_6) \notin \mathcal{J}_7}} S\left(\mathcal{A}_{p_1 p_2 p_3 p_4 p_5 p_6}, p_6 \right).
\end{equation}
The corresponding loss from $S_{4351}$ are two sums in (11) and (12).

For $S_{4352}$ we cannot use Buchstab's identity twice more in a straightforward manner, but we can use Buchstab's identity twice more with a variable role-reversal, which can be seen as ``an application of Buchstab's identity on large prime variables''. We first apply Buchstab's identity once to get
\begin{align}
\nonumber S_{4352} =&\ \sum_{\substack{\frac{1}{35} \leqslant t_1 < \frac{17}{35} \\ \frac{1}{35} \leqslant t_2 < \min \left(t_1, \frac{1}{2}(1 - t_1) \right) \\ (t_1, t_2) \in \mathcal{M} \\ \frac{1}{35} \leqslant t_3 < \min \left(t_2, \frac{1}{2}(1 - t_1 - t_2) \right) \\ (t_1, t_2, t_3) \notin \mathcal{I}_3 \\ \frac{1}{35} \leqslant t_4 < \min \left(t_3, \frac{1}{2}(1 - t_1 - t_2 - t_3) \right) \\ (t_1, t_2, t_3, t_4) \notin \mathcal{I}_4 \\ (t_1, t_2, t_3, t_4, t_4) \notin \mathcal{J}_5 \\ (t_1, t_2, t_3, t_4) \in \mathcal{J}_4 \text{ and } (1 - t_1 - t_2 - t_3 - t_4, t_2, t_3, t_4) \in \mathcal{J}_4 }} S\left(\mathcal{A}_{p_1 p_2 p_3 p_4}, p_4 \right) \\
\nonumber =&\ \sum_{\substack{\frac{1}{35} \leqslant t_1 < \frac{17}{35} \\ \frac{1}{35} \leqslant t_2 < \min \left(t_1, \frac{1}{2}(1 - t_1) \right) \\ (t_1, t_2) \in \mathcal{M} \\ \frac{1}{35} \leqslant t_3 < \min \left(t_2, \frac{1}{2}(1 - t_1 - t_2) \right) \\ (t_1, t_2, t_3) \notin \mathcal{I}_3 \\ \frac{1}{35} \leqslant t_4 < \min \left(t_3, \frac{1}{2}(1 - t_1 - t_2 - t_3) \right) \\ (t_1, t_2, t_3, t_4) \notin \mathcal{I}_4 \\ (t_1, t_2, t_3, t_4, t_4) \notin \mathcal{J}_5 \\ (t_1, t_2, t_3, t_4) \in \mathcal{J}_4 \text{ and } (1 - t_1 - t_2 - t_3 - t_4, t_2, t_3, t_4) \in \mathcal{J}_4 }} S\left(\mathcal{A}_{p_1 p_2 p_3 p_4}, v^{\frac{1}{35}} \right) \\
\nonumber &- \sum_{\substack{\frac{1}{35} \leqslant t_1 < \frac{17}{35} \\ \frac{1}{35} \leqslant t_2 < \min \left(t_1, \frac{1}{2}(1 - t_1) \right) \\ (t_1, t_2) \in \mathcal{M} \\ \frac{1}{35} \leqslant t_3 < \min \left(t_2, \frac{1}{2}(1 - t_1 - t_2) \right) \\ (t_1, t_2, t_3) \notin \mathcal{I}_3 \\ \frac{1}{35} \leqslant t_4 < \min \left(t_3, \frac{1}{2}(1 - t_1 - t_2 - t_3) \right) \\ (t_1, t_2, t_3, t_4) \notin \mathcal{I}_4 \\ (t_1, t_2, t_3, t_4, t_4) \notin \mathcal{J}_5 \\ (t_1, t_2, t_3, t_4) \in \mathcal{J}_4 \text{ and } (1 - t_1 - t_2 - t_3 - t_4, t_2, t_3, t_4) \in \mathcal{J}_4 \\ \frac{1}{35} \leqslant t_5 < \min \left(t_4, \frac{1}{2}(1 - t_1 - t_2 - t_3 - t_4) \right) \\ (t_1, t_2, t_3, t_4, t_5) \in \mathcal{I}_5 }} S\left(\mathcal{A}_{p_1 p_2 p_3 p_4 p_5}, p_5 \right) \\
&- \sum_{\substack{\frac{1}{35} \leqslant t_1 < \frac{17}{35} \\ \frac{1}{35} \leqslant t_2 < \min \left(t_1, \frac{1}{2}(1 - t_1) \right) \\ (t_1, t_2) \in \mathcal{M} \\ \frac{1}{35} \leqslant t_3 < \min \left(t_2, \frac{1}{2}(1 - t_1 - t_2) \right) \\ (t_1, t_2, t_3) \notin \mathcal{I}_3 \\ \frac{1}{35} \leqslant t_4 < \min \left(t_3, \frac{1}{2}(1 - t_1 - t_2 - t_3) \right) \\ (t_1, t_2, t_3, t_4) \notin \mathcal{I}_4 \\ (t_1, t_2, t_3, t_4, t_4) \notin \mathcal{J}_5 \\ (t_1, t_2, t_3, t_4) \in \mathcal{J}_4 \text{ and } (1 - t_1 - t_2 - t_3 - t_4, t_2, t_3, t_4) \in \mathcal{J}_4 \\ \frac{1}{35} \leqslant t_5 < \min \left(t_4, \frac{1}{2}(1 - t_1 - t_2 - t_3 - t_4) \right) \\ (t_1, t_2, t_3, t_4, t_5) \notin \mathcal{I}_5 }} S\left(\mathcal{A}_{p_1 p_2 p_3 p_4 p_5}, p_5 \right).
\end{align}
We have asymptotic formulas for the first and second sum on the right-hand side of (13) by Lemma~\ref{l31} and Lemma~\ref{l32} respectively. The last sum on the right-hand side of (13) counts numbers of the form $p_1 p_2 p_3 p_4 p_5 \beta_1$, where $\beta_1 \sim v^{1 - t_1 - t_2 - t_3 - t_4 - t_5}$ and $\left(\beta_1, P(p_5)\right) = 1$. Now we reverse the roles of $p_1$ and $\beta_1$:
\begin{align}
\nonumber &\ \sum_{\substack{\frac{1}{35} \leqslant t_1 < \frac{17}{35} \\ \frac{1}{35} \leqslant t_2 < \min \left(t_1, \frac{1}{2}(1 - t_1) \right) \\ (t_1, t_2) \in \mathcal{M} \\ \frac{1}{35} \leqslant t_3 < \min \left(t_2, \frac{1}{2}(1 - t_1 - t_2) \right) \\ (t_1, t_2, t_3) \notin \mathcal{I}_3 \\ \frac{1}{35} \leqslant t_4 < \min \left(t_3, \frac{1}{2}(1 - t_1 - t_2 - t_3) \right) \\ (t_1, t_2, t_3, t_4) \notin \mathcal{I}_4 \\ (t_1, t_2, t_3, t_4, t_4) \notin \mathcal{J}_5 \\ (t_1, t_2, t_3, t_4) \in \mathcal{J}_4 \text{ and } (1 - t_1 - t_2 - t_3 - t_4, t_2, t_3, t_4) \in \mathcal{J}_4 \\ \frac{1}{35} \leqslant t_5 < \min \left(t_4, \frac{1}{2}(1 - t_1 - t_2 - t_3 - t_4) \right) \\ (t_1, t_2, t_3, t_4, t_5) \notin \mathcal{I}_5 }} S\left(\mathcal{A}_{p_1 p_2 p_3 p_4 p_5}, p_5 \right) \\
=&\ \sum_{\substack{\frac{1}{35} \leqslant t_1 < \frac{17}{35} \\ \frac{1}{35} \leqslant t_2 < \min \left(t_1, \frac{1}{2}(1 - t_1) \right) \\ (t_1, t_2) \in \mathcal{M} \\ \frac{1}{35} \leqslant t_3 < \min \left(t_2, \frac{1}{2}(1 - t_1 - t_2) \right) \\ (t_1, t_2, t_3) \notin \mathcal{I}_3 \\ \frac{1}{35} \leqslant t_4 < \min \left(t_3, \frac{1}{2}(1 - t_1 - t_2 - t_3) \right) \\ (t_1, t_2, t_3, t_4) \notin \mathcal{I}_4 \\ (t_1, t_2, t_3, t_4, t_4) \notin \mathcal{J}_5 \\ (t_1, t_2, t_3, t_4) \in \mathcal{J}_4 \text{ and } (1 - t_1 - t_2 - t_3 - t_4, t_2, t_3, t_4) \in \mathcal{J}_4 \\ \frac{1}{35} \leqslant t_5 < \min \left(t_4, \frac{1}{2}(1 - t_1 - t_2 - t_3 - t_4) \right) \\ (t_1, t_2, t_3, t_4, t_5) \notin \mathcal{I}_5 }} S\left(\mathcal{A}_{\beta_1 p_2 p_3 p_4 p_5}, \left(\frac{2v}{\beta_1 p_2 p_3 p_4 p_5} \right)^{\frac{1}{2}} \right).
\end{align}
Note that the sum on the right-hand side of (14) counts numbers of the form $\beta_1 p_2 p_3 p_4 p_5 p_1$. Now we perform a Buchstab iteration on the variable $p_1$ to get
\begin{align}
\nonumber &\ \sum_{\substack{\frac{1}{35} \leqslant t_1 < \frac{17}{35} \\ \frac{1}{35} \leqslant t_2 < \min \left(t_1, \frac{1}{2}(1 - t_1) \right) \\ (t_1, t_2) \in \mathcal{M} \\ \frac{1}{35} \leqslant t_3 < \min \left(t_2, \frac{1}{2}(1 - t_1 - t_2) \right) \\ (t_1, t_2, t_3) \notin \mathcal{I}_3 \\ \frac{1}{35} \leqslant t_4 < \min \left(t_3, \frac{1}{2}(1 - t_1 - t_2 - t_3) \right) \\ (t_1, t_2, t_3, t_4) \notin \mathcal{I}_4 \\ (t_1, t_2, t_3, t_4, t_4) \notin \mathcal{J}_5 \\ (t_1, t_2, t_3, t_4) \in \mathcal{J}_4 \text{ and } (1 - t_1 - t_2 - t_3 - t_4, t_2, t_3, t_4) \in \mathcal{J}_4 \\ \frac{1}{35} \leqslant t_5 < \min \left(t_4, \frac{1}{2}(1 - t_1 - t_2 - t_3 - t_4) \right) \\ (t_1, t_2, t_3, t_4, t_5) \notin \mathcal{I}_5 }} S\left(\mathcal{A}_{\beta_1 p_2 p_3 p_4 p_5}, \left(\frac{2v}{\beta_1 p_2 p_3 p_4 p_5} \right)^{\frac{1}{2}} \right) \\
\nonumber =&\ \sum_{\substack{\frac{1}{35} \leqslant t_1 < \frac{17}{35} \\ \frac{1}{35} \leqslant t_2 < \min \left(t_1, \frac{1}{2}(1 - t_1) \right) \\ (t_1, t_2) \in \mathcal{M} \\ \frac{1}{35} \leqslant t_3 < \min \left(t_2, \frac{1}{2}(1 - t_1 - t_2) \right) \\ (t_1, t_2, t_3) \notin \mathcal{I}_3 \\ \frac{1}{35} \leqslant t_4 < \min \left(t_3, \frac{1}{2}(1 - t_1 - t_2 - t_3) \right) \\ (t_1, t_2, t_3, t_4) \notin \mathcal{I}_4 \\ (t_1, t_2, t_3, t_4, t_4) \notin \mathcal{J}_5 \\ (t_1, t_2, t_3, t_4) \in \mathcal{J}_4 \text{ and } (1 - t_1 - t_2 - t_3 - t_4, t_2, t_3, t_4) \in \mathcal{J}_4 \\ \frac{1}{35} \leqslant t_5 < \min \left(t_4, \frac{1}{2}(1 - t_1 - t_2 - t_3 - t_4) \right) \\ (t_1, t_2, t_3, t_4, t_5) \notin \mathcal{I}_5 }} S\left(\mathcal{A}_{\beta_1 p_2 p_3 p_4 p_5}, v^{\frac{1}{35}} \right) \\
\nonumber &- \sum_{\substack{\frac{1}{35} \leqslant t_1 < \frac{17}{35} \\ \frac{1}{35} \leqslant t_2 < \min \left(t_1, \frac{1}{2}(1 - t_1) \right) \\ (t_1, t_2) \in \mathcal{M} \\ \frac{1}{35} \leqslant t_3 < \min \left(t_2, \frac{1}{2}(1 - t_1 - t_2) \right) \\ (t_1, t_2, t_3) \notin \mathcal{I}_3 \\ \frac{1}{35} \leqslant t_4 < \min \left(t_3, \frac{1}{2}(1 - t_1 - t_2 - t_3) \right) \\ (t_1, t_2, t_3, t_4) \notin \mathcal{I}_4 \\ (t_1, t_2, t_3, t_4, t_4) \notin \mathcal{J}_5 \\ (t_1, t_2, t_3, t_4) \in \mathcal{J}_4 \text{ and } (1 - t_1 - t_2 - t_3 - t_4, t_2, t_3, t_4) \in \mathcal{J}_4 \\ \frac{1}{35} \leqslant t_5 < \min \left(t_4, \frac{1}{2}(1 - t_1 - t_2 - t_3 - t_4) \right) \\ (t_1, t_2, t_3, t_4, t_5) \notin \mathcal{I}_5 \\ \frac{1}{35} \leqslant t_6 < \frac{1}{2} t_1 \\ (1 - t_1 - t_2 - t_3 - t_4 - t_5, t_2, t_3, t_4, t_5, t_6) \in \mathcal{I}_6 }} S\left(\mathcal{A}_{\beta_1 p_2 p_3 p_4 p_5 p_6}, p_6 \right) \\
&- \sum_{\substack{\frac{1}{35} \leqslant t_1 < \frac{17}{35} \\ \frac{1}{35} \leqslant t_2 < \min \left(t_1, \frac{1}{2}(1 - t_1) \right) \\ (t_1, t_2) \in \mathcal{M} \\ \frac{1}{35} \leqslant t_3 < \min \left(t_2, \frac{1}{2}(1 - t_1 - t_2) \right) \\ (t_1, t_2, t_3) \notin \mathcal{I}_3 \\ \frac{1}{35} \leqslant t_4 < \min \left(t_3, \frac{1}{2}(1 - t_1 - t_2 - t_3) \right) \\ (t_1, t_2, t_3, t_4) \notin \mathcal{I}_4 \\ (t_1, t_2, t_3, t_4, t_4) \notin \mathcal{J}_5 \\ (t_1, t_2, t_3, t_4) \in \mathcal{J}_4 \text{ and } (1 - t_1 - t_2 - t_3 - t_4, t_2, t_3, t_4) \in \mathcal{J}_4 \\ \frac{1}{35} \leqslant t_5 < \min \left(t_4, \frac{1}{2}(1 - t_1 - t_2 - t_3 - t_4) \right) \\ (t_1, t_2, t_3, t_4, t_5) \notin \mathcal{I}_5 \\ \frac{1}{35} \leqslant t_6 < \frac{1}{2} t_1 \\ (1 - t_1 - t_2 - t_3 - t_4 - t_5, t_2, t_3, t_4, t_5, t_6) \notin \mathcal{I}_6 }} S\left(\mathcal{A}_{\beta_1 p_2 p_3 p_4 p_5 p_6}, p_6 \right).
\end{align}
Combining (13)--(15), we have
\begin{align}
\nonumber S_{4352} =&\ \sum_{\substack{\frac{1}{35} \leqslant t_1 < \frac{17}{35} \\ \frac{1}{35} \leqslant t_2 < \min \left(t_1, \frac{1}{2}(1 - t_1) \right) \\ (t_1, t_2) \in \mathcal{M} \\ \frac{1}{35} \leqslant t_3 < \min \left(t_2, \frac{1}{2}(1 - t_1 - t_2) \right) \\ (t_1, t_2, t_3) \notin \mathcal{I}_3 \\ \frac{1}{35} \leqslant t_4 < \min \left(t_3, \frac{1}{2}(1 - t_1 - t_2 - t_3) \right) \\ (t_1, t_2, t_3, t_4) \notin \mathcal{I}_4 \\ (t_1, t_2, t_3, t_4, t_4) \notin \mathcal{J}_5 \\ (t_1, t_2, t_3, t_4) \in \mathcal{J}_4 \text{ and } (1 - t_1 - t_2 - t_3 - t_4, t_2, t_3, t_4) \in \mathcal{J}_4 }} S\left(\mathcal{A}_{p_1 p_2 p_3 p_4}, p_4 \right) \\
\nonumber =&\ \sum_{\substack{\frac{1}{35} \leqslant t_1 < \frac{17}{35} \\ \frac{1}{35} \leqslant t_2 < \min \left(t_1, \frac{1}{2}(1 - t_1) \right) \\ (t_1, t_2) \in \mathcal{M} \\ \frac{1}{35} \leqslant t_3 < \min \left(t_2, \frac{1}{2}(1 - t_1 - t_2) \right) \\ (t_1, t_2, t_3) \notin \mathcal{I}_3 \\ \frac{1}{35} \leqslant t_4 < \min \left(t_3, \frac{1}{2}(1 - t_1 - t_2 - t_3) \right) \\ (t_1, t_2, t_3, t_4) \notin \mathcal{I}_4 \\ (t_1, t_2, t_3, t_4, t_4) \notin \mathcal{J}_5 \\ (t_1, t_2, t_3, t_4) \in \mathcal{J}_4 \text{ and } (1 - t_1 - t_2 - t_3 - t_4, t_2, t_3, t_4) \in \mathcal{J}_4 }} S\left(\mathcal{A}_{p_1 p_2 p_3 p_4}, v^{\frac{1}{35}} \right) \\
\nonumber &- \sum_{\substack{\frac{1}{35} \leqslant t_1 < \frac{17}{35} \\ \frac{1}{35} \leqslant t_2 < \min \left(t_1, \frac{1}{2}(1 - t_1) \right) \\ (t_1, t_2) \in \mathcal{M} \\ \frac{1}{35} \leqslant t_3 < \min \left(t_2, \frac{1}{2}(1 - t_1 - t_2) \right) \\ (t_1, t_2, t_3) \notin \mathcal{I}_3 \\ \frac{1}{35} \leqslant t_4 < \min \left(t_3, \frac{1}{2}(1 - t_1 - t_2 - t_3) \right) \\ (t_1, t_2, t_3, t_4) \notin \mathcal{I}_4 \\ (t_1, t_2, t_3, t_4, t_4) \notin \mathcal{J}_5 \\ (t_1, t_2, t_3, t_4) \in \mathcal{J}_4 \text{ and } (1 - t_1 - t_2 - t_3 - t_4, t_2, t_3, t_4) \in \mathcal{J}_4 \\ \frac{1}{35} \leqslant t_5 < \min \left(t_4, \frac{1}{2}(1 - t_1 - t_2 - t_3 - t_4) \right) \\ (t_1, t_2, t_3, t_4, t_5) \in \mathcal{I}_5 }} S\left(\mathcal{A}_{p_1 p_2 p_3 p_4 p_5}, p_5 \right) \\
\nonumber &- \sum_{\substack{\frac{1}{35} \leqslant t_1 < \frac{17}{35} \\ \frac{1}{35} \leqslant t_2 < \min \left(t_1, \frac{1}{2}(1 - t_1) \right) \\ (t_1, t_2) \in \mathcal{M} \\ \frac{1}{35} \leqslant t_3 < \min \left(t_2, \frac{1}{2}(1 - t_1 - t_2) \right) \\ (t_1, t_2, t_3) \notin \mathcal{I}_3 \\ \frac{1}{35} \leqslant t_4 < \min \left(t_3, \frac{1}{2}(1 - t_1 - t_2 - t_3) \right) \\ (t_1, t_2, t_3, t_4) \notin \mathcal{I}_4 \\ (t_1, t_2, t_3, t_4, t_4) \notin \mathcal{J}_5 \\ (t_1, t_2, t_3, t_4) \in \mathcal{J}_4 \text{ and } (1 - t_1 - t_2 - t_3 - t_4, t_2, t_3, t_4) \in \mathcal{J}_4 \\ \frac{1}{35} \leqslant t_5 < \min \left(t_4, \frac{1}{2}(1 - t_1 - t_2 - t_3 - t_4) \right) \\ (t_1, t_2, t_3, t_4, t_5) \notin \mathcal{I}_5 }} S\left(\mathcal{A}_{\beta_1 p_2 p_3 p_4 p_5}, v^{\frac{1}{35}} \right) \\
\nonumber &+ \sum_{\substack{\frac{1}{35} \leqslant t_1 < \frac{17}{35} \\ \frac{1}{35} \leqslant t_2 < \min \left(t_1, \frac{1}{2}(1 - t_1) \right) \\ (t_1, t_2) \in \mathcal{M} \\ \frac{1}{35} \leqslant t_3 < \min \left(t_2, \frac{1}{2}(1 - t_1 - t_2) \right) \\ (t_1, t_2, t_3) \notin \mathcal{I}_3 \\ \frac{1}{35} \leqslant t_4 < \min \left(t_3, \frac{1}{2}(1 - t_1 - t_2 - t_3) \right) \\ (t_1, t_2, t_3, t_4) \notin \mathcal{I}_4 \\ (t_1, t_2, t_3, t_4, t_4) \notin \mathcal{J}_5 \\ (t_1, t_2, t_3, t_4) \in \mathcal{J}_4 \text{ and } (1 - t_1 - t_2 - t_3 - t_4, t_2, t_3, t_4) \in \mathcal{J}_4 \\ \frac{1}{35} \leqslant t_5 < \min \left(t_4, \frac{1}{2}(1 - t_1 - t_2 - t_3 - t_4) \right) \\ (t_1, t_2, t_3, t_4, t_5) \notin \mathcal{I}_5 \\ \frac{1}{35} \leqslant t_6 < \frac{1}{2} t_1 \\ (1 - t_1 - t_2 - t_3 - t_4 - t_5, t_2, t_3, t_4, t_5, t_6) \in \mathcal{I}_6 }} S\left(\mathcal{A}_{\beta_1 p_2 p_3 p_4 p_5 p_6}, p_6 \right) \\
\nonumber &+ \sum_{\substack{\frac{1}{35} \leqslant t_1 < \frac{17}{35} \\ \frac{1}{35} \leqslant t_2 < \min \left(t_1, \frac{1}{2}(1 - t_1) \right) \\ (t_1, t_2) \in \mathcal{M} \\ \frac{1}{35} \leqslant t_3 < \min \left(t_2, \frac{1}{2}(1 - t_1 - t_2) \right) \\ (t_1, t_2, t_3) \notin \mathcal{I}_3 \\ \frac{1}{35} \leqslant t_4 < \min \left(t_3, \frac{1}{2}(1 - t_1 - t_2 - t_3) \right) \\ (t_1, t_2, t_3, t_4) \notin \mathcal{I}_4 \\ (t_1, t_2, t_3, t_4, t_4) \notin \mathcal{J}_5 \\ (t_1, t_2, t_3, t_4) \in \mathcal{J}_4 \text{ and } (1 - t_1 - t_2 - t_3 - t_4, t_2, t_3, t_4) \in \mathcal{J}_4 \\ \frac{1}{35} \leqslant t_5 < \min \left(t_4, \frac{1}{2}(1 - t_1 - t_2 - t_3 - t_4) \right) \\ (t_1, t_2, t_3, t_4, t_5) \notin \mathcal{I}_5 \\ \frac{1}{35} \leqslant t_6 < \frac{1}{2} t_1 \\ (1 - t_1 - t_2 - t_3 - t_4 - t_5, t_2, t_3, t_4, t_5, t_6) \notin \mathcal{I}_6 }} S\left(\mathcal{A}_{\beta_1 p_2 p_3 p_4 p_5 p_6}, p_6 \right) \\
=&\ S_{43521} - S_{43522} - S_{43523} + S_{43524} + S_{43525}.
\end{align}
We have an asymptotic formula for $S_{43521}$ by Lemma~\ref{l31}. We have asymptotic formulas for $S_{43522}$ and $S_{43524}$ by Lemma~\ref{l32}. Since we have $\beta_1 p_5 \sim v^{1 - t_1 - t_2 - t_3 - t_4}$, we have an asymptotic formula for $S_{43523}$. For $S_{43525}$ we can decompose some parts in a way similar to that in decomposing $S_{43515}$, but now we have two almost-prime variables. Note that $S_{43525}$ counts numbers of the form $\beta_1 p_2 p_3 p_4 p_5 p_6 \beta_2$, where $\beta_2 \sim v^{t_1 - t_6}$ and $\left(\beta_2, P(p_6)\right) = 1$. Now we can perform Buchstab iterations on either $\beta_1$ or $\beta_2$. For the case of decomposing $\beta_2$, we need the condition $(1 - t_1 - t_2 - t_3 - t_4 - t_5, t_2, t_3, t_4, t_5, t_6, t_6) \in \mathcal{J}_7$, and the corresponding eight-dimensional sum is
\begin{equation}
\sum_{\substack{\frac{1}{35} \leqslant t_1 < \frac{17}{35} \\ \frac{1}{35} \leqslant t_2 < \min \left(t_1, \frac{1}{2}(1 - t_1) \right) \\ (t_1, t_2) \in \mathcal{M} \\ \frac{1}{35} \leqslant t_3 < \min \left(t_2, \frac{1}{2}(1 - t_1 - t_2) \right) \\ (t_1, t_2, t_3) \notin \mathcal{I}_3 \\ \frac{1}{35} \leqslant t_4 < \min \left(t_3, \frac{1}{2}(1 - t_1 - t_2 - t_3) \right) \\ (t_1, t_2, t_3, t_4) \notin \mathcal{I}_4 \\ (t_1, t_2, t_3, t_4, t_4) \notin \mathcal{J}_5 \\ (t_1, t_2, t_3, t_4) \in \mathcal{J}_4 \text{ and } (1 - t_1 - t_2 - t_3 - t_4, t_2, t_3, t_4) \in \mathcal{J}_4 \\ \frac{1}{35} \leqslant t_5 < \min \left(t_4, \frac{1}{2}(1 - t_1 - t_2 - t_3 - t_4) \right) \\ (t_1, t_2, t_3, t_4, t_5) \notin \mathcal{I}_5 \\ \frac{1}{35} \leqslant t_6 < \frac{1}{2} t_1 \\ (1 - t_1 - t_2 - t_3 - t_4 - t_5, t_2, t_3, t_4, t_5, t_6) \notin \mathcal{I}_6 \\ (1 - t_1 - t_2 - t_3 - t_4 - t_5, t_2, t_3, t_4, t_5, t_6, t_6) \in \mathcal{J}_7 \\ \frac{1}{35} \leqslant t_7 < \min \left(t_6, \frac{1}{2}(t_1 - t_6) \right) \\ (1 - t_1 - t_2 - t_3 - t_4 - t_5, t_2, t_3, t_4, t_5, t_6, t_7) \notin \mathcal{I}_7 \\ \frac{1}{35} \leqslant t_8 < \min \left(t_7, \frac{1}{2}(t_1 - t_6 - t_7) \right) \\ (1 - t_1 - t_2 - t_3 - t_4 - t_5, t_2, t_3, t_4, t_5, t_6, t_7, t_8) \notin \mathcal{I}_8 }} S\left(\mathcal{A}_{\beta_1 p_2 p_3 p_4 p_5 p_6 p_7 p_8}, p_8 \right).
\end{equation}
For the case of decomposing $\beta_1$, we need the condition $(t_1 - t_6, t_2, t_3, t_4, t_5, t_6, t_5) \in \mathcal{J}_7$, and the corresponding eight-dimensional sum is
\begin{equation}
\sum_{\substack{\frac{1}{35} \leqslant t_1 < \frac{17}{35} \\ \frac{1}{35} \leqslant t_2 < \min \left(t_1, \frac{1}{2}(1 - t_1) \right) \\ (t_1, t_2) \in \mathcal{M} \\ \frac{1}{35} \leqslant t_3 < \min \left(t_2, \frac{1}{2}(1 - t_1 - t_2) \right) \\ (t_1, t_2, t_3) \notin \mathcal{I}_3 \\ \frac{1}{35} \leqslant t_4 < \min \left(t_3, \frac{1}{2}(1 - t_1 - t_2 - t_3) \right) \\ (t_1, t_2, t_3, t_4) \notin \mathcal{I}_4 \\ (t_1, t_2, t_3, t_4, t_4) \notin \mathcal{J}_5 \\ (t_1, t_2, t_3, t_4) \in \mathcal{J}_4 \text{ and } (1 - t_1 - t_2 - t_3 - t_4, t_2, t_3, t_4) \in \mathcal{J}_4 \\ \frac{1}{35} \leqslant t_5 < \min \left(t_4, \frac{1}{2}(1 - t_1 - t_2 - t_3 - t_4) \right) \\ (t_1, t_2, t_3, t_4, t_5) \notin \mathcal{I}_5 \\ \frac{1}{35} \leqslant t_6 < \frac{1}{2} t_1 \\ (1 - t_1 - t_2 - t_3 - t_4 - t_5, t_2, t_3, t_4, t_5, t_6) \notin \mathcal{I}_6 \\ (t_1 - t_6, t_2, t_3, t_4, t_5, t_6, t_5) \in \mathcal{J}_7 \\ \frac{1}{35} \leqslant t_7 < \min \left(t_5, \frac{1}{2}(1 - t_1 - t_2 - t_3 - t_4 - t_5) \right) \\ (t_1 - t_6, t_2, t_3, t_4, t_5, t_6, t_7) \notin \mathcal{I}_7 \\ \frac{1}{35} \leqslant t_8 < \min \left(t_7, \frac{1}{2}(1 - t_1 - t_2 - t_3 - t_4 - t_5 - t_7) \right) \\ (t_1 - t_6, t_2, t_3, t_4, t_5, t_6, t_7, t_8) \notin \mathcal{I}_8 }} S\left(\mathcal{A}_{\beta_2 p_2 p_3 p_4 p_5 p_6 p_7 p_8}, p_8 \right).
\end{equation}
We also need to discard the remaining parts of $S_{43525}$:
\begin{equation}
\sum_{\substack{\frac{1}{35} \leqslant t_1 < \frac{17}{35} \\ \frac{1}{35} \leqslant t_2 < \min \left(t_1, \frac{1}{2}(1 - t_1) \right) \\ (t_1, t_2) \in \mathcal{M} \\ \frac{1}{35} \leqslant t_3 < \min \left(t_2, \frac{1}{2}(1 - t_1 - t_2) \right) \\ (t_1, t_2, t_3) \notin \mathcal{I}_3 \\ \frac{1}{35} \leqslant t_4 < \min \left(t_3, \frac{1}{2}(1 - t_1 - t_2 - t_3) \right) \\ (t_1, t_2, t_3, t_4) \notin \mathcal{I}_4 \\ (t_1, t_2, t_3, t_4, t_4) \notin \mathcal{J}_5 \\ (t_1, t_2, t_3, t_4) \in \mathcal{J}_4 \text{ and } (1 - t_1 - t_2 - t_3 - t_4, t_2, t_3, t_4) \in \mathcal{J}_4 \\ \frac{1}{35} \leqslant t_5 < \min \left(t_4, \frac{1}{2}(1 - t_1 - t_2 - t_3 - t_4) \right) \\ (t_1, t_2, t_3, t_4, t_5) \notin \mathcal{I}_5 \\ \frac{1}{35} \leqslant t_6 < \frac{1}{2} t_1 \\ (1 - t_1 - t_2 - t_3 - t_4 - t_5, t_2, t_3, t_4, t_5, t_6) \notin \mathcal{I}_6 \\ (1 - t_1 - t_2 - t_3 - t_4 - t_5, t_2, t_3, t_4, t_5, t_6, t_6) \notin \mathcal{J}_7 \\ (t_1 - t_6, t_2, t_3, t_4, t_5, t_6, t_5) \notin \mathcal{J}_7 }} S\left(\mathcal{A}_{\beta_1 p_2 p_3 p_4 p_5 p_6}, p_6 \right).
\end{equation}
The corresponding loss from $S_{4352}$ are three sums in (17), (18) and (19).

For the remaining $S_{4353}$ we cannot use straightforward decompositions and role-reversals, but we can use Buchstab's identity in another way to get possible savings. Note that the numbers that $S_{4353}$ counts is of the form $p_1 p_2 p_3 p_4 m_0$, where the smallest prime factor of $m_0$ is larger than $p_4$. If $t_4 > \frac{1}{2}(1-t_1 -t_2 -t_3 -t_4)$, we know that $m_0$ cannot have two or more prime factors and therefore must be a prime. Otherwise $m_0$ may have more than one prime factor, which means that we can ``split'' the smallest prime factor of $m_0$ and consider it separately:
\begin{align}
\nonumber &\ \sum_{\substack{t_1, t_2, t_3, t_4 \\ t_4 < \frac{1}{2}(1-t_1 -t_2 -t_3 -t_4)}} S\left(\mathcal{A}_{p_{1} p_{2} p_{3} p_{4}}, p_{4}\right) \\
=&\ \sum_{\substack{t_1, t_2, t_3, t_4 \\ t_4 < \frac{1}{2}(1-t_1 -t_2 -t_3 -t_4)}} S\left(\mathcal{A}_{p_{1} p_{2} p_{3} p_{4}}, \left(\frac{x}{p_1 p_2 p_3 p_4}\right)^{\frac{1}{2}}\right) + \sum_{\substack{t_1, t_2, t_3, t_4 \\ t_4 < t_5 < \frac{1}{2}(1-t_1 -t_2 -t_3 -t_4)}} S\left(\mathcal{A}_{p_{1} p_{2} p_{3} p_{4} p_{5}}, p_{5}\right).
\end{align}
The sum $S\left(\mathcal{A}_{p_{1} p_{2} p_{3} p_{4}}, \left(\frac{x}{p_1 p_2 p_3 p_4}\right)^{\frac{1}{2}}\right)$ counts numbers of the form $p_1 p_2 p_3 p_4 p^{\prime}$, where $p^{\prime} = m_0$. The sum $S\left(\mathcal{A}_{p_{1} p_{2} p_{3} p_{4} p_{5}}, p_{5}\right)$ counts numbers of the form $p_1 p_2 p_3 p_4 (p_5 m_1)$, where $p_5 m_1 = m_0$, $p_5 > p_4$ and all prime factors of $m_1$ are larger than $p_5$. In this sum we have a new variable $t_5$, which means that part of this sum may have an asymptotic formula. In this situation, we can give an asymptotic formula for part of the last sum on the right-hand side of (20), hence we can subtract its contribution from the loss from $S_{4353}$. Again, for the remaining part of $S\left(\mathcal{A}_{p_{1} p_{2} p_{3} p_{4} p_{5}}, p_{5}\right)$, we can do a similar process as (20) to subtract the contribution of the numbers $p_1 p_2 p_3 p_4 p_5 (p_6 m_2)$ that have asymptotic formulas. The above process can be rewritten as
\begin{align}
\nonumber &\ \sum_{\substack{t_1, t_2, t_3, t_4 \\ t_4 < \frac{1}{2}(1-t_1 -t_2 -t_3 -t_4)}} S\left(\mathcal{A}_{p_{1} p_{2} p_{3} p_{4}}, \left(\frac{x}{p_1 p_2 p_3 p_4}\right)^{\frac{1}{2}}\right) \\
=&\ \sum_{\substack{t_1, t_2, t_3, t_4 \\ t_4 < \frac{1}{2}(1-t_1 -t_2 -t_3 -t_4)}} S\left(\mathcal{A}_{p_{1} p_{2} p_{3} p_{4}}, p_{4}\right) - \sum_{\substack{t_1, t_2, t_3, t_4 \\ t_4 < t_5 < \frac{1}{2}(1-t_1 -t_2 -t_3 -t_4)}} S\left(\mathcal{A}_{p_{1} p_{2} p_{3} p_{4} p_{5}}, p_{5}\right).
\end{align}
Since (21) is a direct application of Buchstab's identity, we shall call (20) ``Buchstab's identity in reverse'' or ``reversed Buchstab's identity'' in the rest of our paper. Since applying reversed Buchstab's identity twice or more times gives only a small saving over high-dimensional sums, we only apply it once on $S_{4353}$ and subtract the contribution of the sum
\begin{equation}
\sum_{\substack{\frac{1}{35} \leqslant t_1 < \frac{17}{35} \\ \frac{1}{35} \leqslant t_2 < \min \left(t_1, \frac{1}{2}(1 - t_1) \right) \\ (t_1, t_2) \in \mathcal{M} \\ \frac{1}{35} \leqslant t_3 < \min \left(t_2, \frac{1}{2}(1 - t_1 - t_2) \right) \\ (t_1, t_2, t_3) \notin \mathcal{I}_3 \\ \frac{1}{35} \leqslant t_4 < \min \left(t_3, \frac{1}{2}(1 - t_1 - t_2 - t_3) \right) \\ (t_1, t_2, t_3, t_4) \notin \mathcal{I}_4 \\ (t_1, t_2, t_3, t_4, t_4) \notin \mathcal{J}_5 \\ \text{either } (t_1, t_2, t_3, t_4) \notin \mathcal{J}_4 \text{ or } (1 - t_1 - t_2 - t_3 - t_4, t_2, t_3, t_4) \notin \mathcal{J}_4 \\ t_4 < t_5 < \frac{1}{2}(1-t_1 -t_2 -t_3 -t_4) \\ (t_1, t_2, t_3, t_4, t_5) \in \mathcal{I}_5 }} S\left(\mathcal{A}_{p_1 p_2 p_3 p_4 p_5}, p_5 \right)
\end{equation}
from $S_{4353}$. We discard other parts of $S_{4353}$.

Combining all the three cases above, we get a loss from $S_{435}$ and also a loss from $S_{43}$ of
\begin{align}
\nonumber & \left( \int_{(t_1, t_2, t_3, t_4) \in U_{\mathcal{M}1} } \frac{\omega \left(\frac{1 - t_1 - t_2 - t_3 - t_4}{t_4}\right)}{t_1 t_2 t_3 t_4^2} d t_4 d t_3 d t_2 d t_1 \right) \\
\nonumber -& \left( \int_{(t_1, t_2, t_3, t_4, t_5) \in U_{\mathcal{M}2} } \frac{\omega \left(\frac{1 - t_1 - t_2 - t_3 - t_4 - t_5}{t_5}\right)}{t_1 t_2 t_3 t_4 t_5^2} d t_5 d t_4 d t_3 d t_2 d t_1 \right) \\
\nonumber +& \left( \int_{(t_1, t_2, t_3, t_4, t_5, t_6) \in U_{\mathcal{M}3} } \frac{\omega \left(\frac{1 - t_1 - t_2 - t_3 - t_4 - t_5 - t_6}{t_6}\right)}{t_1 t_2 t_3 t_4 t_5 t_6^2} d t_6 d t_5 d t_4 d t_3 d t_2 d t_1 \right) \\
\nonumber +& \left( \int_{(t_1, t_2, t_3, t_4, t_5, t_6) \in U_{\mathcal{M}4} } \frac{\omega \left(\frac{t_1 - t_6}{t_6}\right) \omega \left(\frac{1 - t_1 - t_2 - t_3 - t_4 - t_5}{t_5}\right)}{t_2 t_3 t_4 t_5^2 t_6^2} d t_6 d t_5 d t_4 d t_3 d t_2 d t_1 \right) \\
\nonumber +& \left( \int_{(t_1, t_2, t_3, t_4, t_5, t_6, t_7, t_8) \in U_{\mathcal{M}5} } \frac{\omega \left(\frac{1 - t_1 - t_2 - t_3 - t_4 - t_5 - t_6 - t_7 - t_8}{t_8}\right)}{t_1 t_2 t_3 t_4 t_5 t_6 t_7 t_8^2} d t_8 d t_7 d t_6 d t_5 d t_4 d t_3 d t_2 d t_1 \right) \\
\nonumber +& \left( \int_{(t_1, t_2, t_3, t_4, t_5, t_6, t_7, t_8) \in U_{\mathcal{M}6} } \frac{\omega \left(\frac{t_1 - t_6 - t_7 - t_8}{t_8}\right) \omega \left(\frac{1 - t_1 - t_2 - t_3 - t_4 - t_5}{t_5}\right)}{t_2 t_3 t_4 t_5^2 t_6 t_7 t_8^2} d t_8 d t_7 d t_6 d t_5 d t_4 d t_3 d t_2 d t_1 \right) \\
\nonumber +& \left( \int_{(t_1, t_2, t_3, t_4, t_5, t_6, t_7, t_8) \in U_{\mathcal{M}7} } \frac{\omega \left(\frac{t_1 - t_6}{t_6}\right) \omega \left(\frac{1 - t_1 - t_2 - t_3 - t_4 - t_5 - t_7 - t_8}{t_8}\right)}{t_2 t_3 t_4 t_5 t_6^2 t_7 t_8^2} d t_8 d t_7 d t_6 d t_5 d t_4 d t_3 d t_2 d t_1 \right) \\
\nonumber \leqslant& \left( \int_{(t_1, t_2, t_3, t_4) \in U_{\mathcal{M}1} } \frac{\omega_1 \left(\frac{1 - t_1 - t_2 - t_3 - t_4}{t_4}\right)}{t_1 t_2 t_3 t_4^2} d t_4 d t_3 d t_2 d t_1 \right) \\
\nonumber -& \left( \int_{(t_1, t_2, t_3, t_4, t_5) \in U_{\mathcal{M}2} } \frac{\omega_0 \left(\frac{1 - t_1 - t_2 - t_3 - t_4 - t_5}{t_5}\right)}{t_1 t_2 t_3 t_4 t_5^2} d t_5 d t_4 d t_3 d t_2 d t_1 \right) \\
\nonumber +& \left( \int_{(t_1, t_2, t_3, t_4, t_5, t_6) \in U_{\mathcal{M}3} } \frac{\omega_1 \left(\frac{1 - t_1 - t_2 - t_3 - t_4 - t_5 - t_6}{t_6}\right)}{t_1 t_2 t_3 t_4 t_5 t_6^2} d t_6 d t_5 d t_4 d t_3 d t_2 d t_1 \right) \\
\nonumber +& \left( \int_{(t_1, t_2, t_3, t_4, t_5, t_6) \in U_{\mathcal{M}4} } \frac{\omega_1 \left(\frac{t_1 - t_6}{t_6}\right) \omega_1 \left(\frac{1 - t_1 - t_2 - t_3 - t_4 - t_5}{t_5}\right)}{t_2 t_3 t_4 t_5^2 t_6^2} d t_6 d t_5 d t_4 d t_3 d t_2 d t_1 \right) \\
\nonumber +& \left( \int_{(t_1, t_2, t_3, t_4, t_5, t_6, t_7, t_8) \in U_{\mathcal{M}5} } \frac{\max \left(\frac{t_8}{1 - t_1 - t_2 - t_3 - t_4 - t_5 - t_6 - t_7 - t_8}, 0.5672\right)}{t_1 t_2 t_3 t_4 t_5 t_6 t_7 t_8^2} d t_8 d t_7 d t_6 d t_5 d t_4 d t_3 d t_2 d t_1 \right) \\
\nonumber +& \left( \int_{(t_1, t_2, t_3, t_4, t_5, t_6, t_7, t_8) \in U_{\mathcal{M}6} } \frac{\max \left(\frac{t_8}{t_1 - t_6 - t_7 - t_8}, 0.5672\right) \max \left(\frac{t_5}{1 - t_1 - t_2 - t_3 - t_4 - t_5}, 0.5672\right)}{t_2 t_3 t_4 t_5^2 t_6 t_7 t_8^2} d t_8 d t_7 d t_6 d t_5 d t_4 d t_3 d t_2 d t_1 \right) \\
\nonumber +& \left( \int_{(t_1, t_2, t_3, t_4, t_5, t_6, t_7, t_8) \in U_{\mathcal{M}7} } \frac{\max \left(\frac{t_6}{t_1 - t_6}, 0.5672\right) \max \left(\frac{t_8}{1 - t_1 - t_2 - t_3 - t_4 - t_5 - t_7 - t_8}, 0.5672\right)}{t_2 t_3 t_4 t_5 t_6^2 t_7 t_8^2} d t_8 d t_7 d t_6 d t_5 d t_4 d t_3 d t_2 d t_1 \right) \\
\nonumber \leqslant&\ (0.081234 - 0.002216 + 0.030572 + 0.061125 + 0.005742 + 0.000001 + 0.000001) \\
=&\ 0.176459,
\end{align}
where
\begin{align}
\nonumber U_{\mathcal{M}1}(t_1, t_2, t_3, t_4) :=&\ \left\{ (t_1, t_2) \in \mathcal{M},\ \frac{1}{35} \leqslant t_3 < \min\left(t_2, \frac{1}{2}(1 - t_1 - t_2)\right),\ (t_1, t_2, t_3) \notin \mathcal{I}_3, \right. \\
\nonumber & \quad \frac{1}{35} \leqslant t_4 < \min \left(t_3, \frac{1}{2}(1 - t_1 - t_2 - t_3) \right), \\
\nonumber & \quad (t_1, t_2, t_3, t_4) \notin \mathcal{I}_4,\ (t_1, t_2, t_3, t_4, t_4) \notin \mathcal{J}_5, \\
\nonumber & \quad \text{either } (t_1, t_2, t_3, t_4) \notin \mathcal{J}_4 \text{ or } (1 - t_1 - t_2 - t_3 - t_4, t_2, t_3, t_4) \notin \mathcal{J}_4, \\
\nonumber & \left. \quad \frac{1}{35} \leqslant t_1 < \frac{17}{35},\ \frac{1}{35} \leqslant t_2 < \min\left(t_1, \frac{1}{2}(1-t_1) \right) \right\}, \\
\nonumber U_{\mathcal{M}2}(t_1, t_2, t_3, t_4, t_5) :=&\ \left\{ (t_1, t_2) \in \mathcal{M},\ \frac{1}{35} \leqslant t_3 < \min\left(t_2, \frac{1}{2}(1 - t_1 - t_2)\right),\ (t_1, t_2, t_3) \notin \mathcal{I}_3, \right. \\
\nonumber & \quad \frac{1}{35} \leqslant t_4 < \min \left(t_3, \frac{1}{2}(1 - t_1 - t_2 - t_3) \right), \\
\nonumber & \quad (t_1, t_2, t_3, t_4) \notin \mathcal{I}_4,\ (t_1, t_2, t_3, t_4, t_4) \notin \mathcal{J}_5, \\
\nonumber & \quad \text{either } (t_1, t_2, t_3, t_4) \notin \mathcal{J}_4 \text{ or } (1 - t_1 - t_2 - t_3 - t_4, t_2, t_3, t_4) \notin \mathcal{J}_4, \\
\nonumber & \quad t_4 < t_5 < \frac{1}{2}(1 - t_1 - t_2 - t_3 - t_4),\ (t_1, t_2, t_3, t_4, t_5) \in \mathcal{I}_5, \\
\nonumber & \left. \quad \frac{1}{35} \leqslant t_1 < \frac{17}{35},\ \frac{1}{35} \leqslant t_2 < \min\left(t_1, \frac{1}{2}(1-t_1) \right) \right\}, \\
\nonumber U_{\mathcal{M}3}(t_1, t_2, t_3, t_4, t_5, t_6) :=&\ \left\{ (t_1, t_2) \in \mathcal{M},\ \frac{1}{35} \leqslant t_3 < \min\left(t_2, \frac{1}{2}(1 - t_1 - t_2)\right),\ (t_1, t_2, t_3) \notin \mathcal{I}_3, \right. \\
\nonumber & \quad \frac{1}{35} \leqslant t_4 < \min \left(t_3, \frac{1}{2}(1 - t_1 - t_2 - t_3) \right), \\
\nonumber & \quad (t_1, t_2, t_3, t_4) \notin \mathcal{I}_4,\ (t_1, t_2, t_3, t_4, t_4) \in \mathcal{J}_5, \\
\nonumber & \quad \frac{1}{35} \leqslant t_5 < \min \left(t_4, \frac{1}{2}(1 - t_1 - t_2 - t_3 - t_4) \right),\ (t_1, t_2, t_3, t_4, t_5) \notin \mathcal{I}_5, \\
\nonumber & \quad \frac{1}{35} \leqslant t_6 < \min \left(t_5, \frac{1}{2}(1 - t_1 - t_2 - t_3 - t_4 - t_5) \right),\\
\nonumber & \quad (t_1, t_2, t_3, t_4, t_5, t_6) \notin \mathcal{I}_6,\ (t_1, t_2, t_3, t_4, t_5, t_6, t_6) \notin \mathcal{J}_7, \\
\nonumber & \left. \quad \frac{1}{35} \leqslant t_1 < \frac{17}{35},\ \frac{1}{35} \leqslant t_2 < \min\left(t_1, \frac{1}{2}(1-t_1) \right) \right\}, \\
\nonumber U_{\mathcal{M}4}(t_1, t_2, t_3, t_4, t_5, t_6) :=&\ \left\{ (t_1, t_2) \in \mathcal{M},\ \frac{1}{35} \leqslant t_3 < \min\left(t_2, \frac{1}{2}(1 - t_1 - t_2)\right),\ (t_1, t_2, t_3) \notin \mathcal{I}_3, \right. \\
\nonumber & \quad \frac{1}{35} \leqslant t_4 < \min \left(t_3, \frac{1}{2}(1 - t_1 - t_2 - t_3) \right), \\
\nonumber & \quad (t_1, t_2, t_3, t_4) \notin \mathcal{I}_4,\ (t_1, t_2, t_3, t_4, t_4) \notin \mathcal{J}_5, \\
\nonumber & \quad (t_1, t_2, t_3, t_4) \in \mathcal{J}_4,\ (1 - t_1 - t_2 - t_3 - t_4, t_2, t_3, t_4) \in \mathcal{J}_4, \\
\nonumber & \quad \frac{1}{35} \leqslant t_5 < \min \left(t_4, \frac{1}{2}(1 - t_1 - t_2 - t_3 - t_4) \right),\ (t_1, t_2, t_3, t_4, t_5) \notin \mathcal{I}_5, \\
\nonumber & \quad \frac{1}{35} \leqslant t_6 < \frac{1}{2} t_1,\ (1 - t_1 - t_2 - t_3 - t_4 - t_5, t_2, t_3, t_4, t_5, t_6) \notin \mathcal{I}_6, \\
\nonumber & \quad (1 - t_1 - t_2 - t_3 - t_4 - t_5, t_2, t_3, t_4, t_5, t_6, t_6) \notin \mathcal{J}_7, \\
\nonumber & \quad (t_1 - t_6, t_2, t_3, t_4, t_5, t_6, t_5) \notin \mathcal{J}_7, \\
\nonumber & \left. \quad \frac{1}{35} \leqslant t_1 < \frac{17}{35},\ \frac{1}{35} \leqslant t_2 < \min\left(t_1, \frac{1}{2}(1-t_1) \right) \right\}, \\
\nonumber U_{\mathcal{M}5}(t_1, t_2, t_3, t_4, t_5, t_6, t_7, t_8) :=&\ \left\{ (t_1, t_2) \in \mathcal{M},\ \frac{1}{35} \leqslant t_3 < \min\left(t_2, \frac{1}{2}(1 - t_1 - t_2)\right),\ (t_1, t_2, t_3) \notin \mathcal{I}_3, \right. \\
\nonumber & \quad \frac{1}{35} \leqslant t_4 < \min \left(t_3, \frac{1}{2}(1 - t_1 - t_2 - t_3) \right), \\
\nonumber & \quad (t_1, t_2, t_3, t_4) \notin \mathcal{I}_4,\ (t_1, t_2, t_3, t_4, t_4) \in \mathcal{J}_5, \\
\nonumber & \quad \frac{1}{35} \leqslant t_5 < \min \left(t_4, \frac{1}{2}(1 - t_1 - t_2 - t_3 - t_4) \right),\ (t_1, t_2, t_3, t_4, t_5) \notin \mathcal{I}_5, \\
\nonumber & \quad \frac{1}{35} \leqslant t_6 < \min \left(t_5, \frac{1}{2}(1 - t_1 - t_2 - t_3 - t_4 - t_5) \right), \\
\nonumber & \quad (t_1, t_2, t_3, t_4, t_5, t_6) \notin \mathcal{I}_6,\ (t_1, t_2, t_3, t_4, t_5, t_6, t_6) \in \mathcal{J}_7, \\
\nonumber & \quad \frac{1}{35} \leqslant t_7 < \min \left(t_6, \frac{1}{2}(1 - t_1 - t_2 - t_3 - t_4 - t_5 - t_6) \right), \\
\nonumber & \quad (t_1, t_2, t_3, t_4, t_5, t_6, t_7) \notin \mathcal{I}_7, \\
\nonumber & \quad \frac{1}{35} \leqslant t_8 < \min \left(t_7, \frac{1}{2}(1 - t_1 - t_2 - t_3 - t_4 - t_5 - t_6 - t_7) \right), \\
\nonumber & \quad (t_1, t_2, t_3, t_4, t_5, t_6, t_7, t_8) \notin \mathcal{I}_8, \\
\nonumber & \left. \quad \frac{1}{35} \leqslant t_1 < \frac{17}{35},\ \frac{1}{35} \leqslant t_2 < \min\left(t_1, \frac{1}{2}(1-t_1) \right) \right\}, \\
\nonumber U_{\mathcal{M}6}(t_1, t_2, t_3, t_4, t_5, t_6, t_7, t_8) :=&\ \left\{ (t_1, t_2) \in \mathcal{M},\ \frac{1}{35} \leqslant t_3 < \min\left(t_2, \frac{1}{2}(1 - t_1 - t_2)\right),\ (t_1, t_2, t_3) \notin \mathcal{I}_3, \right. \\
\nonumber & \quad \frac{1}{35} \leqslant t_4 < \min \left(t_3, \frac{1}{2}(1 - t_1 - t_2 - t_3) \right), \\
\nonumber & \quad (t_1, t_2, t_3, t_4) \notin \mathcal{I}_4,\ (t_1, t_2, t_3, t_4, t_4) \notin \mathcal{J}_5, \\
\nonumber & \quad (t_1, t_2, t_3, t_4) \in \mathcal{J}_4,\ (1 - t_1 - t_2 - t_3 - t_4, t_2, t_3, t_4) \in \mathcal{J}_4, \\
\nonumber & \quad \frac{1}{35} \leqslant t_5 < \min \left(t_4, \frac{1}{2}(1 - t_1 - t_2 - t_3 - t_4) \right),\ (t_1, t_2, t_3, t_4, t_5) \notin \mathcal{I}_5, \\
\nonumber & \quad \frac{1}{35} \leqslant t_6 < \frac{1}{2} t_1,\ (1 - t_1 - t_2 - t_3 - t_4 - t_5, t_2, t_3, t_4, t_5, t_6) \notin \mathcal{I}_6, \\
\nonumber & \quad (1 - t_1 - t_2 - t_3 - t_4 - t_5, t_2, t_3, t_4, t_5, t_6, t_6) \in \mathcal{J}_7, \\
\nonumber & \quad \frac{1}{35} \leqslant t_7 < \min \left(t_6, \frac{1}{2}(t_1 - t_6) \right), \\
\nonumber & \quad (1 - t_1 - t_2 - t_3 - t_4 - t_5, t_2, t_3, t_4, t_5, t_6, t_7) \notin \mathcal{I}_7, \\
\nonumber & \quad \frac{1}{35} \leqslant t_8 < \min \left(t_7, \frac{1}{2}(t_1 - t_6 - t_7) \right), \\
\nonumber & \quad (1 - t_1 - t_2 - t_3 - t_4 - t_5, t_2, t_3, t_4, t_5, t_6, t_7, t_8) \notin \mathcal{I}_8, \\
\nonumber & \left. \quad \frac{1}{35} \leqslant t_1 < \frac{17}{35},\ \frac{1}{35} \leqslant t_2 < \min\left(t_1, \frac{1}{2}(1-t_1) \right) \right\}, \\
\nonumber U_{\mathcal{M}7}(t_1, t_2, t_3, t_4, t_5, t_6, t_7, t_8) :=&\ \left\{ (t_1, t_2) \in \mathcal{M},\ \frac{1}{35} \leqslant t_3 < \min\left(t_2, \frac{1}{2}(1 - t_1 - t_2)\right),\ (t_1, t_2, t_3) \notin \mathcal{I}_3, \right. \\
\nonumber & \quad \frac{1}{35} \leqslant t_4 < \min \left(t_3, \frac{1}{2}(1 - t_1 - t_2 - t_3) \right), \\
\nonumber & \quad (t_1, t_2, t_3, t_4) \notin \mathcal{I}_4,\ (t_1, t_2, t_3, t_4, t_4) \notin \mathcal{J}_5, \\
\nonumber & \quad (t_1, t_2, t_3, t_4) \in \mathcal{J}_4,\ (1 - t_1 - t_2 - t_3 - t_4, t_2, t_3, t_4) \in \mathcal{J}_4, \\
\nonumber & \quad \frac{1}{35} \leqslant t_5 < \min \left(t_4, \frac{1}{2}(1 - t_1 - t_2 - t_3 - t_4) \right),\ (t_1, t_2, t_3, t_4, t_5) \notin \mathcal{I}_5, \\
\nonumber & \quad \frac{1}{35} \leqslant t_6 < \frac{1}{2} t_1,\ (1 - t_1 - t_2 - t_3 - t_4 - t_5, t_2, t_3, t_4, t_5, t_6) \notin \mathcal{I}_6, \\
\nonumber & \quad (1 - t_1 - t_2 - t_3 - t_4 - t_5, t_2, t_3, t_4, t_5, t_6, t_6) \notin \mathcal{J}_7, \\
\nonumber & \quad (t_1 - t_6, t_2, t_3, t_4, t_5, t_6, t_5) \in \mathcal{J}_7, \\
\nonumber & \quad \frac{1}{35} \leqslant t_7 < \min \left(t_5, \frac{1}{2}(1 - t_1 - t_2 - t_3 - t_4 - t_5) \right), \\
\nonumber & \quad (t_1 - t_6, t_2, t_3, t_4, t_5, t_6, t_7) \notin \mathcal{I}_7, \\
\nonumber & \quad \frac{1}{35} \leqslant t_8 < \min \left(t_7, \frac{1}{2}(1 - t_1 - t_2 - t_3 - t_4 - t_5 - t_7) \right), \\
\nonumber & \quad (t_1 - t_6, t_2, t_3, t_4, t_5, t_6, t_7, t_8) \notin \mathcal{I}_8, \\
\nonumber & \left. \quad \frac{1}{35} \leqslant t_1 < \frac{17}{35},\ \frac{1}{35} \leqslant t_2 < \min\left(t_1, \frac{1}{2}(1-t_1) \right) \right\}.
\end{align}
Note that the seven integrals above correspond to $S_{4353}$ and sums in (22), (12), (19), (11), (17) and (18) respectively.

Next we shall decompose $S_{44}$. By Buchstab's identity, we have
\begin{align}
\nonumber S_{44} =&\ \sum_{\substack{\frac{1}{35} \leqslant t_1 < \frac{17}{35} \\ \frac{1}{35} \leqslant t_2 < \min \left(t_1, \frac{1}{2}(1 - t_1) \right) \\ (t_1, t_2) \in \mathcal{N} }} S\left(\mathcal{A}_{p_1 p_2}, p_2\right) \\
\nonumber =&\ \sum_{\substack{\frac{1}{35} \leqslant t_1 < \frac{17}{35} \\ \frac{1}{35} \leqslant t_2 < \min \left(t_1, \frac{1}{2}(1 - t_1) \right) \\ (t_1, t_2) \in \mathcal{N} }} S\left(\mathcal{A}_{p_1 p_2}, v^{\frac{1}{35}}\right) - \sum_{\substack{\frac{1}{35} \leqslant t_1 < \frac{17}{35} \\ \frac{1}{35} \leqslant t_2 < \min \left(t_1, \frac{1}{2}(1 - t_1) \right) \\ (t_1, t_2) \in \mathcal{N} \\ \frac{1}{35} \leqslant t_3 < \min \left(t_2, \frac{1}{2}(1 - t_1 - t_2) \right) \\  (t_1, t_2, t_3) \in \mathcal{I}_3 }} S\left(\mathcal{A}_{p_1 p_2 p_3}, p_3 \right) \\
\nonumber &- \sum_{\substack{\frac{1}{35} \leqslant t_1 < \frac{17}{35} \\ \frac{1}{35} \leqslant t_2 < \min \left(t_1, \frac{1}{2}(1 - t_1) \right) \\ (t_1, t_2) \in \mathcal{N} \\ \frac{1}{35} \leqslant t_3 < \min \left(t_2, \frac{1}{2}(1 - t_1 - t_2) \right) \\  (t_1, t_2, t_3) \notin \mathcal{I}_3 \\ (t_1, t_2, t_3) \in \mathcal{J}_3 }} S\left(\mathcal{A}_{p_1 p_2 p_3}, p_3 \right) - \sum_{\substack{\frac{1}{35} \leqslant t_1 < \frac{17}{35} \\ \frac{1}{35} \leqslant t_2 < \min \left(t_1, \frac{1}{2}(1 - t_1) \right) \\ (t_1, t_2) \in \mathcal{N} \\ \frac{1}{35} \leqslant t_3 < \min \left(t_2, \frac{1}{2}(1 - t_1 - t_2) \right) \\  (t_1, t_2, t_3) \notin \mathcal{I}_3 \\ (t_1, t_2, t_3) \notin \mathcal{J}_3 }} S\left(\mathcal{A}_{p_1 p_2 p_3}, p_3 \right) \\
=&\ S_{441} - S_{442} - S_{443} - S_{444}.
\end{align}
By Lemma~\ref{l31} and Lemma~\ref{l32}, we have asymptotic formulas for $S_{441}$ and $S_{442}$.

For $S_{443}$, we can use Buchstab's identity again to get
\begin{align}
\nonumber S_{443} =&\ \sum_{\substack{\frac{1}{35} \leqslant t_1 < \frac{17}{35} \\ \frac{1}{35} \leqslant t_2 < \min \left(t_1, \frac{1}{2}(1 - t_1) \right) \\ (t_1, t_2) \in \mathcal{N} \\ \frac{1}{35} \leqslant t_3 < \min \left(t_2, \frac{1}{2}(1 - t_1 - t_2) \right) \\  (t_1, t_2, t_3) \notin \mathcal{I}_3 \\ (t_1, t_2, t_3) \in \mathcal{J}_3 }} S\left(\mathcal{A}_{p_1 p_2 p_3}, p_3 \right) \\
\nonumber =&\ \sum_{\substack{\frac{1}{35} \leqslant t_1 < \frac{17}{35} \\ \frac{1}{35} \leqslant t_2 < \min \left(t_1, \frac{1}{2}(1 - t_1) \right) \\ (t_1, t_2) \in \mathcal{N} \\ \frac{1}{35} \leqslant t_3 < \min \left(t_2, \frac{1}{2}(1 - t_1 - t_2) \right) \\  (t_1, t_2, t_3) \notin \mathcal{I}_3 \\ (t_1, t_2, t_3) \in \mathcal{J}_3 }} S\left(\mathcal{A}_{p_1 p_2 p_3}, v^{\frac{1}{35}} \right) - \sum_{\substack{\frac{1}{35} \leqslant t_1 < \frac{17}{35} \\ \frac{1}{35} \leqslant t_2 < \min \left(t_1, \frac{1}{2}(1 - t_1) \right) \\ (t_1, t_2) \in \mathcal{N} \\ \frac{1}{35} \leqslant t_3 < \min \left(t_2, \frac{1}{2}(1 - t_1 - t_2) \right) \\  (t_1, t_2, t_3) \notin \mathcal{I}_3 \\ (t_1, t_2, t_3) \in \mathcal{J}_3 \\ \frac{1}{35} \leqslant t_4 < \min \left(t_3, \frac{1}{2}(1 - t_1 - t_2 - t_3) \right) \\  (t_1, t_2, t_3, t_4) \in \mathcal{I}_4 }} S\left(\mathcal{A}_{p_1 p_2 p_3 p_4}, p_4 \right) \\
\nonumber & - \sum_{\substack{\frac{1}{35} \leqslant t_1 < \frac{17}{35} \\ \frac{1}{35} \leqslant t_2 < \min \left(t_1, \frac{1}{2}(1 - t_1) \right) \\ (t_1, t_2) \in \mathcal{N} \\ \frac{1}{35} \leqslant t_3 < \min \left(t_2, \frac{1}{2}(1 - t_1 - t_2) \right) \\  (t_1, t_2, t_3) \notin \mathcal{I}_3 \\ (t_1, t_2, t_3) \in \mathcal{J}_3 \\ \frac{1}{35} \leqslant t_4 < \min \left(t_3, \frac{1}{2}(1 - t_1 - t_2 - t_3) \right) \\  (t_1, t_2, t_3, t_4) \notin \mathcal{I}_4 }} S\left(\mathcal{A}_{p_1 p_2 p_3 p_4}, p_4 \right) \\
=&\ S_{4431} - S_{4432} - S_{4433}.
\end{align}
By Lemma~\ref{l31} and Lemma~\ref{l32}, we have asymptotic formulas for $S_{4431}$ and $S_{4432}$. For $S_{4433}$, we can perform the whole process of decomposing $S_{435}$ to find the loss. The resulting integrals are very similar to those in (23), and the only change we need to do is to replace the condition $(t_1, t_2) \in \mathcal{M}$ in those integrals by $(t_1, t_2) \in \mathcal{N}$ and $(t_1, t_2, t_3) \in \mathcal{J}_3$. Thus, the loss from $S_{443}$ can be bounded by
\begin{align}
\nonumber & \left( \int_{(t_1, t_2, t_3, t_4) \in U_{\mathcal{N}01} } \frac{\omega \left(\frac{1 - t_1 - t_2 - t_3 - t_4}{t_4}\right)}{t_1 t_2 t_3 t_4^2} d t_4 d t_3 d t_2 d t_1 \right) \\
\nonumber -& \left( \int_{(t_1, t_2, t_3, t_4, t_5) \in U_{\mathcal{N}02} } \frac{\omega \left(\frac{1 - t_1 - t_2 - t_3 - t_4 - t_5}{t_5}\right)}{t_1 t_2 t_3 t_4 t_5^2} d t_5 d t_4 d t_3 d t_2 d t_1 \right) \\
\nonumber +& \left( \int_{(t_1, t_2, t_3, t_4, t_5, t_6) \in U_{\mathcal{N}03} } \frac{\omega \left(\frac{1 - t_1 - t_2 - t_3 - t_4 - t_5 - t_6}{t_6}\right)}{t_1 t_2 t_3 t_4 t_5 t_6^2} d t_6 d t_5 d t_4 d t_3 d t_2 d t_1 \right) \\
\nonumber +& \left( \int_{(t_1, t_2, t_3, t_4, t_5, t_6) \in U_{\mathcal{N}04} } \frac{\omega \left(\frac{t_1 - t_6}{t_6}\right) \omega \left(\frac{1 - t_1 - t_2 - t_3 - t_4 - t_5}{t_5}\right)}{t_2 t_3 t_4 t_5^2 t_6^2} d t_6 d t_5 d t_4 d t_3 d t_2 d t_1 \right) \\
\nonumber +& \left( \int_{(t_1, t_2, t_3, t_4, t_5, t_6, t_7, t_8) \in U_{\mathcal{N}05} } \frac{\omega \left(\frac{1 - t_1 - t_2 - t_3 - t_4 - t_5 - t_6 - t_7 - t_8}{t_8}\right)}{t_1 t_2 t_3 t_4 t_5 t_6 t_7 t_8^2} d t_8 d t_7 d t_6 d t_5 d t_4 d t_3 d t_2 d t_1 \right) \\
\nonumber +& \left( \int_{(t_1, t_2, t_3, t_4, t_5, t_6, t_7, t_8) \in U_{\mathcal{N}06} } \frac{\omega \left(\frac{t_1 - t_6 - t_7 - t_8}{t_8}\right) \omega \left(\frac{1 - t_1 - t_2 - t_3 - t_4 - t_5}{t_5}\right)}{t_2 t_3 t_4 t_5^2 t_6 t_7 t_8^2} d t_8 d t_7 d t_6 d t_5 d t_4 d t_3 d t_2 d t_1 \right) \\
\nonumber +& \left( \int_{(t_1, t_2, t_3, t_4, t_5, t_6, t_7, t_8) \in U_{\mathcal{N}07} } \frac{\omega \left(\frac{t_1 - t_6}{t_6}\right) \omega \left(\frac{1 - t_1 - t_2 - t_3 - t_4 - t_5 - t_7 - t_8}{t_8}\right)}{t_2 t_3 t_4 t_5 t_6^2 t_7 t_8^2} d t_8 d t_7 d t_6 d t_5 d t_4 d t_3 d t_2 d t_1 \right) \\
\nonumber \leqslant& \left( \int_{(t_1, t_2, t_3, t_4) \in U_{\mathcal{N}01} } \frac{\omega_1 \left(\frac{1 - t_1 - t_2 - t_3 - t_4}{t_4}\right)}{t_1 t_2 t_3 t_4^2} d t_4 d t_3 d t_2 d t_1 \right) \\
\nonumber -& \left( \int_{(t_1, t_2, t_3, t_4, t_5) \in U_{\mathcal{N}02} } \frac{\omega_0 \left(\frac{1 - t_1 - t_2 - t_3 - t_4 - t_5}{t_5}\right)}{t_1 t_2 t_3 t_4 t_5^2} d t_5 d t_4 d t_3 d t_2 d t_1 \right) \\
\nonumber +& \left( \int_{(t_1, t_2, t_3, t_4, t_5, t_6) \in U_{\mathcal{N}03} } \frac{\omega_1 \left(\frac{1 - t_1 - t_2 - t_3 - t_4 - t_5 - t_6}{t_6}\right)}{t_1 t_2 t_3 t_4 t_5 t_6^2} d t_6 d t_5 d t_4 d t_3 d t_2 d t_1 \right) \\
\nonumber +& \left( \int_{(t_1, t_2, t_3, t_4, t_5, t_6) \in U_{\mathcal{N}04} } \frac{\omega_1 \left(\frac{t_1 - t_6}{t_6}\right) \omega_1 \left(\frac{1 - t_1 - t_2 - t_3 - t_4 - t_5}{t_5}\right)}{t_2 t_3 t_4 t_5^2 t_6^2} d t_6 d t_5 d t_4 d t_3 d t_2 d t_1 \right) \\
\nonumber +& \left( \int_{(t_1, t_2, t_3, t_4, t_5, t_6, t_7, t_8) \in U_{\mathcal{N}05} } \frac{\max \left(\frac{t_8}{1 - t_1 - t_2 - t_3 - t_4 - t_5 - t_6 - t_7 - t_8}, 0.5672\right)}{t_1 t_2 t_3 t_4 t_5 t_6 t_7 t_8^2} d t_8 d t_7 d t_6 d t_5 d t_4 d t_3 d t_2 d t_1 \right) \\
\nonumber +& \left( \int_{(t_1, t_2, t_3, t_4, t_5, t_6, t_7, t_8) \in U_{\mathcal{N}06} } \frac{\max \left(\frac{t_8}{t_1 - t_6 - t_7 - t_8}, 0.5672\right) \max \left(\frac{t_5}{1 - t_1 - t_2 - t_3 - t_4 - t_5}, 0.5672\right)}{t_2 t_3 t_4 t_5^2 t_6 t_7 t_8^2} d t_8 d t_7 d t_6 d t_5 d t_4 d t_3 d t_2 d t_1 \right) \\
\nonumber +& \left( \int_{(t_1, t_2, t_3, t_4, t_5, t_6, t_7, t_8) \in U_{\mathcal{N}07} } \frac{\max \left(\frac{t_6}{t_1 - t_6}, 0.5672\right) \max \left(\frac{t_8}{1 - t_1 - t_2 - t_3 - t_4 - t_5 - t_7 - t_8}, 0.5672\right)}{t_2 t_3 t_4 t_5 t_6^2 t_7 t_8^2} d t_8 d t_7 d t_6 d t_5 d t_4 d t_3 d t_2 d t_1 \right) \\
\nonumber \leqslant&\ (0.016780 - 0 + 0.007814 + 0.015516 + 0.000001 + 0.000001 + 0.000001) \\
=&\ 0.040113,
\end{align}
where
\begin{align}
\nonumber U_{\mathcal{N}01}(t_1, t_2, t_3, t_4) :=&\ \left\{ (t_1, t_2) \in \mathcal{N},\ \frac{1}{35} \leqslant t_3 < \min\left(t_2, \frac{1}{2}(1 - t_1 - t_2)\right), \right. \\
\nonumber & \quad (t_1, t_2, t_3) \notin \mathcal{I}_3,\ (t_1, t_2, t_3) \in \mathcal{J}_3, \\
\nonumber & \quad \frac{1}{35} \leqslant t_4 < \min \left(t_3, \frac{1}{2}(1 - t_1 - t_2 - t_3) \right), \\
\nonumber & \quad (t_1, t_2, t_3, t_4) \notin \mathcal{I}_4,\ (t_1, t_2, t_3, t_4, t_4) \notin \mathcal{J}_5, \\
\nonumber & \quad \text{either } (t_1, t_2, t_3, t_4) \notin \mathcal{J}_4 \text{ or } (1 - t_1 - t_2 - t_3 - t_4, t_2, t_3, t_4) \notin \mathcal{J}_4, \\
\nonumber & \left. \quad \frac{1}{35} \leqslant t_1 < \frac{17}{35},\ \frac{1}{35} \leqslant t_2 < \min\left(t_1, \frac{1}{2}(1-t_1) \right) \right\}, \\
\nonumber U_{\mathcal{N}02}(t_1, t_2, t_3, t_4, t_5) :=&\ \left\{ (t_1, t_2) \in \mathcal{N},\ \frac{1}{35} \leqslant t_3 < \min\left(t_2, \frac{1}{2}(1 - t_1 - t_2)\right), \right. \\
\nonumber & \quad (t_1, t_2, t_3) \notin \mathcal{I}_3,\ (t_1, t_2, t_3) \in \mathcal{J}_3, \\
\nonumber & \quad \frac{1}{35} \leqslant t_4 < \min \left(t_3, \frac{1}{2}(1 - t_1 - t_2 - t_3) \right), \\
\nonumber & \quad (t_1, t_2, t_3, t_4) \notin \mathcal{I}_4,\ (t_1, t_2, t_3, t_4, t_4) \notin \mathcal{J}_5, \\
\nonumber & \quad \text{either } (t_1, t_2, t_3, t_4) \notin \mathcal{J}_4 \text{ or } (1 - t_1 - t_2 - t_3 - t_4, t_2, t_3, t_4) \notin \mathcal{J}_4, \\
\nonumber & \quad t_4 < t_5 < \frac{1}{2}(1 - t_1 - t_2 - t_3 - t_4),\ (t_1, t_2, t_3, t_4, t_5) \in \mathcal{I}_5, \\
\nonumber & \left. \quad \frac{1}{35} \leqslant t_1 < \frac{17}{35},\ \frac{1}{35} \leqslant t_2 < \min\left(t_1, \frac{1}{2}(1-t_1) \right) \right\}, \\
\nonumber U_{\mathcal{N}03}(t_1, t_2, t_3, t_4, t_5, t_6) :=&\ \left\{ (t_1, t_2) \in \mathcal{N},\ \frac{1}{35} \leqslant t_3 < \min\left(t_2, \frac{1}{2}(1 - t_1 - t_2)\right), \right. \\
\nonumber & \quad (t_1, t_2, t_3) \notin \mathcal{I}_3,\ (t_1, t_2, t_3) \in \mathcal{J}_3, \\
\nonumber & \quad \frac{1}{35} \leqslant t_4 < \min \left(t_3, \frac{1}{2}(1 - t_1 - t_2 - t_3) \right), \\
\nonumber & \quad (t_1, t_2, t_3, t_4) \notin \mathcal{I}_4,\ (t_1, t_2, t_3, t_4, t_4) \in \mathcal{J}_5, \\
\nonumber & \quad \frac{1}{35} \leqslant t_5 < \min \left(t_4, \frac{1}{2}(1 - t_1 - t_2 - t_3 - t_4) \right),\ (t_1, t_2, t_3, t_4, t_5) \notin \mathcal{I}_5, \\
\nonumber & \quad \frac{1}{35} \leqslant t_6 < \min \left(t_5, \frac{1}{2}(1 - t_1 - t_2 - t_3 - t_4 - t_5) \right),\\
\nonumber & \quad (t_1, t_2, t_3, t_4, t_5, t_6) \notin \mathcal{I}_6,\ (t_1, t_2, t_3, t_4, t_5, t_6, t_6) \notin \mathcal{J}_7, \\
\nonumber & \left. \quad \frac{1}{35} \leqslant t_1 < \frac{17}{35},\ \frac{1}{35} \leqslant t_2 < \min\left(t_1, \frac{1}{2}(1-t_1) \right) \right\}, \\
\nonumber U_{\mathcal{N}04}(t_1, t_2, t_3, t_4, t_5, t_6) :=&\ \left\{ (t_1, t_2) \in \mathcal{N},\ \frac{1}{35} \leqslant t_3 < \min\left(t_2, \frac{1}{2}(1 - t_1 - t_2)\right), \right. \\
\nonumber & \quad (t_1, t_2, t_3) \notin \mathcal{I}_3,\ (t_1, t_2, t_3) \in \mathcal{J}_3, \\
\nonumber & \quad \frac{1}{35} \leqslant t_4 < \min \left(t_3, \frac{1}{2}(1 - t_1 - t_2 - t_3) \right), \\
\nonumber & \quad (t_1, t_2, t_3, t_4) \notin \mathcal{I}_4,\ (t_1, t_2, t_3, t_4, t_4) \notin \mathcal{J}_5, \\
\nonumber & \quad (t_1, t_2, t_3, t_4) \in \mathcal{J}_4,\ (1 - t_1 - t_2 - t_3 - t_4, t_2, t_3, t_4) \in \mathcal{J}_4, \\
\nonumber & \quad \frac{1}{35} \leqslant t_5 < \min \left(t_4, \frac{1}{2}(1 - t_1 - t_2 - t_3 - t_4) \right),\ (t_1, t_2, t_3, t_4, t_5) \notin \mathcal{I}_5, \\
\nonumber & \quad \frac{1}{35} \leqslant t_6 < \frac{1}{2} t_1,\ (1 - t_1 - t_2 - t_3 - t_4 - t_5, t_2, t_3, t_4, t_5, t_6) \notin \mathcal{I}_6, \\
\nonumber & \quad (1 - t_1 - t_2 - t_3 - t_4 - t_5, t_2, t_3, t_4, t_5, t_6, t_6) \notin \mathcal{J}_7, \\
\nonumber & \quad (t_1 - t_6, t_2, t_3, t_4, t_5, t_6, t_5) \notin \mathcal{J}_7, \\
\nonumber & \left. \quad \frac{1}{35} \leqslant t_1 < \frac{17}{35},\ \frac{1}{35} \leqslant t_2 < \min\left(t_1, \frac{1}{2}(1-t_1) \right) \right\}, \\
\nonumber U_{\mathcal{N}05}(t_1, t_2, t_3, t_4, t_5, t_6, t_7, t_8) :=&\ \left\{ (t_1, t_2) \in \mathcal{N},\ \frac{1}{35} \leqslant t_3 < \min\left(t_2, \frac{1}{2}(1 - t_1 - t_2)\right), \right. \\
\nonumber & \quad (t_1, t_2, t_3) \notin \mathcal{I}_3,\ (t_1, t_2, t_3) \in \mathcal{J}_3, \\
\nonumber & \quad \frac{1}{35} \leqslant t_4 < \min \left(t_3, \frac{1}{2}(1 - t_1 - t_2 - t_3) \right), \\
\nonumber & \quad (t_1, t_2, t_3, t_4) \notin \mathcal{I}_4,\ (t_1, t_2, t_3, t_4, t_4) \in \mathcal{J}_5, \\
\nonumber & \quad \frac{1}{35} \leqslant t_5 < \min \left(t_4, \frac{1}{2}(1 - t_1 - t_2 - t_3 - t_4) \right),\ (t_1, t_2, t_3, t_4, t_5) \notin \mathcal{I}_5, \\
\nonumber & \quad \frac{1}{35} \leqslant t_6 < \min \left(t_5, \frac{1}{2}(1 - t_1 - t_2 - t_3 - t_4 - t_5) \right), \\
\nonumber & \quad (t_1, t_2, t_3, t_4, t_5, t_6) \notin \mathcal{I}_6,\ (t_1, t_2, t_3, t_4, t_5, t_6, t_6) \in \mathcal{J}_7, \\
\nonumber & \quad \frac{1}{35} \leqslant t_7 < \min \left(t_6, \frac{1}{2}(1 - t_1 - t_2 - t_3 - t_4 - t_5 - t_6) \right), \\
\nonumber & \quad (t_1, t_2, t_3, t_4, t_5, t_6, t_7) \notin \mathcal{I}_7, \\
\nonumber & \quad \frac{1}{35} \leqslant t_8 < \min \left(t_7, \frac{1}{2}(1 - t_1 - t_2 - t_3 - t_4 - t_5 - t_6 - t_7) \right), \\
\nonumber & \quad (t_1, t_2, t_3, t_4, t_5, t_6, t_7, t_8) \notin \mathcal{I}_8, \\
\nonumber & \left. \quad \frac{1}{35} \leqslant t_1 < \frac{17}{35},\ \frac{1}{35} \leqslant t_2 < \min\left(t_1, \frac{1}{2}(1-t_1) \right) \right\}, \\
\nonumber U_{\mathcal{N}06}(t_1, t_2, t_3, t_4, t_5, t_6, t_7, t_8) :=&\ \left\{ (t_1, t_2) \in \mathcal{N},\ \frac{1}{35} \leqslant t_3 < \min\left(t_2, \frac{1}{2}(1 - t_1 - t_2)\right), \right. \\
\nonumber & \quad (t_1, t_2, t_3) \notin \mathcal{I}_3,\ (t_1, t_2, t_3) \in \mathcal{J}_3, \\
\nonumber & \quad \frac{1}{35} \leqslant t_4 < \min \left(t_3, \frac{1}{2}(1 - t_1 - t_2 - t_3) \right), \\
\nonumber & \quad (t_1, t_2, t_3, t_4) \notin \mathcal{I}_4,\ (t_1, t_2, t_3, t_4, t_4) \notin \mathcal{J}_5, \\
\nonumber & \quad (t_1, t_2, t_3, t_4) \in \mathcal{J}_4,\ (1 - t_1 - t_2 - t_3 - t_4, t_2, t_3, t_4) \in \mathcal{J}_4, \\
\nonumber & \quad \frac{1}{35} \leqslant t_5 < \min \left(t_4, \frac{1}{2}(1 - t_1 - t_2 - t_3 - t_4) \right),\ (t_1, t_2, t_3, t_4, t_5) \notin \mathcal{I}_5, \\
\nonumber & \quad \frac{1}{35} \leqslant t_6 < \frac{1}{2} t_1,\ (1 - t_1 - t_2 - t_3 - t_4 - t_5, t_2, t_3, t_4, t_5, t_6) \notin \mathcal{I}_6, \\
\nonumber & \quad (1 - t_1 - t_2 - t_3 - t_4 - t_5, t_2, t_3, t_4, t_5, t_6, t_6) \in \mathcal{J}_7, \\
\nonumber & \quad \frac{1}{35} \leqslant t_7 < \min \left(t_6, \frac{1}{2}(t_1 - t_6) \right), \\
\nonumber & \quad (1 - t_1 - t_2 - t_3 - t_4 - t_5, t_2, t_3, t_4, t_5, t_6, t_7) \notin \mathcal{I}_7, \\
\nonumber & \quad \frac{1}{35} \leqslant t_8 < \min \left(t_7, \frac{1}{2}(t_1 - t_6 - t_7) \right), \\
\nonumber & \quad (1 - t_1 - t_2 - t_3 - t_4 - t_5, t_2, t_3, t_4, t_5, t_6, t_7, t_8) \notin \mathcal{I}_8, \\
\nonumber & \left. \quad \frac{1}{35} \leqslant t_1 < \frac{17}{35},\ \frac{1}{35} \leqslant t_2 < \min\left(t_1, \frac{1}{2}(1-t_1) \right) \right\}, \\
\nonumber U_{\mathcal{N}07}(t_1, t_2, t_3, t_4, t_5, t_6, t_7, t_8) :=&\ \left\{ (t_1, t_2) \in \mathcal{N},\ \frac{1}{35} \leqslant t_3 < \min\left(t_2, \frac{1}{2}(1 - t_1 - t_2)\right), \right. \\
\nonumber & \quad (t_1, t_2, t_3) \notin \mathcal{I}_3,\ (t_1, t_2, t_3) \in \mathcal{J}_3, \\
\nonumber & \quad \frac{1}{35} \leqslant t_4 < \min \left(t_3, \frac{1}{2}(1 - t_1 - t_2 - t_3) \right), \\
\nonumber & \quad (t_1, t_2, t_3, t_4) \notin \mathcal{I}_4,\ (t_1, t_2, t_3, t_4, t_4) \notin \mathcal{J}_5, \\
\nonumber & \quad (t_1, t_2, t_3, t_4) \in \mathcal{J}_4,\ (1 - t_1 - t_2 - t_3 - t_4, t_2, t_3, t_4) \in \mathcal{J}_4, \\
\nonumber & \quad \frac{1}{35} \leqslant t_5 < \min \left(t_4, \frac{1}{2}(1 - t_1 - t_2 - t_3 - t_4) \right),\ (t_1, t_2, t_3, t_4, t_5) \notin \mathcal{I}_5, \\
\nonumber & \quad \frac{1}{35} \leqslant t_6 < \frac{1}{2} t_1,\ (1 - t_1 - t_2 - t_3 - t_4 - t_5, t_2, t_3, t_4, t_5, t_6) \notin \mathcal{I}_6, \\
\nonumber & \quad (1 - t_1 - t_2 - t_3 - t_4 - t_5, t_2, t_3, t_4, t_5, t_6, t_6) \notin \mathcal{J}_7, \\
\nonumber & \quad (t_1 - t_6, t_2, t_3, t_4, t_5, t_6, t_5) \in \mathcal{J}_7, \\
\nonumber & \quad \frac{1}{35} \leqslant t_7 < \min \left(t_5, \frac{1}{2}(1 - t_1 - t_2 - t_3 - t_4 - t_5) \right), \\
\nonumber & \quad (t_1 - t_6, t_2, t_3, t_4, t_5, t_6, t_7) \notin \mathcal{I}_7, \\
\nonumber & \quad \frac{1}{35} \leqslant t_8 < \min \left(t_7, \frac{1}{2}(1 - t_1 - t_2 - t_3 - t_4 - t_5 - t_7) \right), \\
\nonumber & \quad (t_1 - t_6, t_2, t_3, t_4, t_5, t_6, t_7, t_8) \notin \mathcal{I}_8, \\
\nonumber & \left. \quad \frac{1}{35} \leqslant t_1 < \frac{17}{35},\ \frac{1}{35} \leqslant t_2 < \min\left(t_1, \frac{1}{2}(1-t_1) \right) \right\}.
\end{align}

For $S_{444}$ we cannot perform Buchstab iteration directly as in (25) since we have $(t_1, t_2, t_3) \notin \mathcal{J}_3$ in this sum. However, we can use role-reversals together with Buchstab's identity to decompose this sum further. Note that $S_{444}$ counts numbers of the form $p_1 p_2 p_3 \beta_3$, where $\beta_3 \sim v^{1 - t_1 - t_2 - t_3}$ and $\left(\beta_3, P(p_3)\right) = 1$. By reversing the roles of $p_1$ and $\beta_3$, together with Buchstab's identity, we have
\begin{align}
\nonumber S_{444} =&\ \sum_{\substack{\frac{1}{35} \leqslant t_1 < \frac{17}{35} \\ \frac{1}{35} \leqslant t_2 < \min \left(t_1, \frac{1}{2}(1 - t_1) \right) \\ (t_1, t_2) \in \mathcal{N} \\ \frac{1}{35} \leqslant t_3 < \min \left(t_2, \frac{1}{2}(1 - t_1 - t_2) \right) \\  (t_1, t_2, t_3) \notin \mathcal{I}_3 \\ (t_1, t_2, t_3) \notin \mathcal{J}_3 }} S\left(\mathcal{A}_{p_1 p_2 p_3}, p_3 \right) \\
\nonumber =&\ \sum_{\substack{\frac{1}{35} \leqslant t_1 < \frac{17}{35} \\ \frac{1}{35} \leqslant t_2 < \min \left(t_1, \frac{1}{2}(1 - t_1) \right) \\ (t_1, t_2) \in \mathcal{N} \\ \frac{1}{35} \leqslant t_3 < \min \left(t_2, \frac{1}{2}(1 - t_1 - t_2) \right) \\  (t_1, t_2, t_3) \notin \mathcal{I}_3 \\ (t_1, t_2, t_3) \notin \mathcal{J}_3 }} S\left(\mathcal{A}_{\beta_3 p_2 p_3}, \left(\frac{2v}{\beta_3 p_2 p_3}\right)^{\frac{1}{2}} \right) \\
\nonumber =&\ \sum_{\substack{\frac{1}{35} \leqslant t_1 < \frac{17}{35} \\ \frac{1}{35} \leqslant t_2 < \min \left(t_1, \frac{1}{2}(1 - t_1) \right) \\ (t_1, t_2) \in \mathcal{N} \\ \frac{1}{35} \leqslant t_3 < \min \left(t_2, \frac{1}{2}(1 - t_1 - t_2) \right) \\  (t_1, t_2, t_3) \notin \mathcal{I}_3 \\ (t_1, t_2, t_3) \notin \mathcal{J}_3 }} S\left(\mathcal{A}_{\beta_3 p_2 p_3}, v^{\frac{1}{35}} \right) - \sum_{\substack{\frac{1}{35} \leqslant t_1 < \frac{17}{35} \\ \frac{1}{35} \leqslant t_2 < \min \left(t_1, \frac{1}{2}(1 - t_1) \right) \\ (t_1, t_2) \in \mathcal{N} \\ \frac{1}{35} \leqslant t_3 < \min \left(t_2, \frac{1}{2}(1 - t_1 - t_2) \right) \\  (t_1, t_2, t_3) \notin \mathcal{I}_3 \\ (t_1, t_2, t_3) \notin \mathcal{J}_3 \\ \frac{1}{35} \leqslant t_4 < \frac{1}{2} t_1 \\  (1 - t_1 - t_2 - t_3, t_2, t_3, t_4) \in \mathcal{I}_4 }} S\left(\mathcal{A}_{\beta_3 p_2 p_3 p_4}, p_4 \right) \\
\nonumber &- \sum_{\substack{\frac{1}{35} \leqslant t_1 < \frac{17}{35} \\ \frac{1}{35} \leqslant t_2 < \min \left(t_1, \frac{1}{2}(1 - t_1) \right) \\ (t_1, t_2) \in \mathcal{N} \\ \frac{1}{35} \leqslant t_3 < \min \left(t_2, \frac{1}{2}(1 - t_1 - t_2) \right) \\  (t_1, t_2, t_3) \notin \mathcal{I}_3 \\ (t_1, t_2, t_3) \notin \mathcal{J}_3 \\ \frac{1}{35} \leqslant t_4 < \frac{1}{2} t_1 \\  (1 - t_1 - t_2 - t_3, t_2, t_3, t_4) \notin \mathcal{I}_4 }} S\left(\mathcal{A}_{\beta_3 p_2 p_3 p_4}, p_4 \right) \\
=&\ S_{4441} - S_{4442} - S_{4443}.
\end{align}
We have an asymptotic formula for $S_{4442}$ by Lemma~\ref{l32}. Since we have $t_2 < \frac{53}{255} < \frac{9}{35}$ in region $\mathcal{N}$, we must have $t_1 + t_2 > \frac{18}{35}$. Otherwise we have $(t_1, t_2) \in \mathcal{M}$ by Lemma~\ref{l22} and the definition of $\mathcal{M}$ with $m = t_1$ and $n = t_2$. Now we have $1 - t_1 - t_2 < \frac{17}{35}$ for $(t_1, t_2) \in \mathcal{N}$. Since $\beta_3 p_3 \sim x^{1 - t_1 - t_2}$, we have $(1 - t_1 - t_2 - t_3, t_2, t_3) \in \mathcal{J}_3$. By Lemma~\ref{l31}, we have an asymptotic formula for $S_{4441}$.

For the remaining $S_{4443}$, we first split it into four parts:
\begin{align}
\nonumber S_{4443} =&\ \sum_{\substack{\frac{1}{35} \leqslant t_1 < \frac{17}{35} \\ \frac{1}{35} \leqslant t_2 < \min \left(t_1, \frac{1}{2}(1 - t_1) \right) \\ (t_1, t_2) \in \mathcal{N} \\ \frac{1}{35} \leqslant t_3 < \min \left(t_2, \frac{1}{2}(1 - t_1 - t_2) \right) \\  (t_1, t_2, t_3) \notin \mathcal{I}_3 \\ (t_1, t_2, t_3) \notin \mathcal{J}_3 \\ \frac{1}{35} \leqslant t_4 < \frac{1}{2} t_1 \\  (1 - t_1 - t_2 - t_3, t_2, t_3, t_4) \notin \mathcal{I}_4 }} S\left(\mathcal{A}_{\beta_3 p_2 p_3 p_4}, p_4 \right) \\
\nonumber =&\ \sum_{\substack{\frac{1}{35} \leqslant t_1 < \frac{17}{35} \\ \frac{1}{35} \leqslant t_2 < \min \left(t_1, \frac{1}{2}(1 - t_1) \right) \\ (t_1, t_2) \in \mathcal{N} \\ \frac{1}{35} \leqslant t_3 < \min \left(t_2, \frac{1}{2}(1 - t_1 - t_2) \right) \\  (t_1, t_2, t_3) \notin \mathcal{I}_3 \\ (t_1, t_2, t_3) \notin \mathcal{J}_3 \\ \frac{1}{35} \leqslant t_4 < \frac{1}{2} t_1 \\  (1 - t_1 - t_2 - t_3, t_2, t_3, t_4) \notin \mathcal{I}_4 \\  (1 - t_1 - t_2 - t_3, t_2, t_3, t_4, t_4) \in \mathcal{J}_5 }} S\left(\mathcal{A}_{\beta_3 p_2 p_3 p_4}, p_4 \right) + \sum_{\substack{\frac{1}{35} \leqslant t_1 < \frac{17}{35} \\ \frac{1}{35} \leqslant t_2 < \min \left(t_1, \frac{1}{2}(1 - t_1) \right) \\ (t_1, t_2) \in \mathcal{N} \\ \frac{1}{35} \leqslant t_3 < \min \left(t_2, \frac{1}{2}(1 - t_1 - t_2) \right) \\  (t_1, t_2, t_3) \notin \mathcal{I}_3 \\ (t_1, t_2, t_3) \notin \mathcal{J}_3 \\ \frac{1}{35} \leqslant t_4 < \frac{1}{2} t_1 \\  (1 - t_1 - t_2 - t_3, t_2, t_3, t_4) \notin \mathcal{I}_4 \\  (1 - t_1 - t_2 - t_3, t_2, t_3, t_4, t_4) \notin \mathcal{J}_5 \\  (t_1 - t_4, t_2, t_3, t_4, t_3) \in \mathcal{J}_5 }} S\left(\mathcal{A}_{\beta_3 p_2 p_3 p_4}, p_4 \right) \\
\nonumber &+ \sum_{\substack{\frac{1}{35} \leqslant t_1 < \frac{17}{35} \\ \frac{1}{35} \leqslant t_2 < \min \left(t_1, \frac{1}{2}(1 - t_1) \right) \\ (t_1, t_2) \in \mathcal{N} \\ \frac{1}{35} \leqslant t_3 < \min \left(t_2, \frac{1}{2}(1 - t_1 - t_2) \right) \\  (t_1, t_2, t_3) \notin \mathcal{I}_3 \\ (t_1, t_2, t_3) \notin \mathcal{J}_3 \\ \frac{1}{35} \leqslant t_4 < \frac{1}{2} t_1 \\  (1 - t_1 - t_2 - t_3, t_2, t_3, t_4) \notin \mathcal{I}_4 \\  (1 - t_1 - t_2 - t_3, t_2, t_3, t_4, t_4) \notin \mathcal{J}_5 \\  (t_1 - t_4, t_2, t_3, t_4, t_3) \notin \mathcal{J}_5 \\ (1 - t_1 - t_2 - t_3, t_2, t_3, t_4) \in \mathcal{J}_4 \text{ and } (1 - t_1 - t_2 - t_3, t_1 - t_4, t_3, \min(t_2, t_4) ) \in \mathcal{J}_4 }} S\left(\mathcal{A}_{\beta_3 p_2 p_3 p_4}, p_4 \right) \\
\nonumber &+ \sum_{\substack{\frac{1}{35} \leqslant t_1 < \frac{17}{35} \\ \frac{1}{35} \leqslant t_2 < \min \left(t_1, \frac{1}{2}(1 - t_1) \right) \\ (t_1, t_2) \in \mathcal{N} \\ \frac{1}{35} \leqslant t_3 < \min \left(t_2, \frac{1}{2}(1 - t_1 - t_2) \right) \\  (t_1, t_2, t_3) \notin \mathcal{I}_3 \\ (t_1, t_2, t_3) \notin \mathcal{J}_3 \\ \frac{1}{35} \leqslant t_4 < \frac{1}{2} t_1 \\  (1 - t_1 - t_2 - t_3, t_2, t_3, t_4) \notin \mathcal{I}_4 \\  (1 - t_1 - t_2 - t_3, t_2, t_3, t_4, t_4) \notin \mathcal{J}_5 \\  (t_1 - t_4, t_2, t_3, t_4, t_3) \notin \mathcal{J}_5 \\ \text{either } (1 - t_1 - t_2 - t_3, t_2, t_3, t_4) \notin \mathcal{J}_4 \text{ or } (1 - t_1 - t_2 - t_3, t_1 - t_4, t_3, \min(t_2, t_4) ) \notin \mathcal{J}_4}} S\left(\mathcal{A}_{\beta_3 p_2 p_3 p_4}, p_4 \right) \\
=&\ S_{44431} + S_{44432} + S_{44433} + S_{44434}.
\end{align}

Similar to the sum $S_{435}$, we can decompose each part of $S_{4443}$ using different ways. For $S_{44431}$ we can apply Buchstab's identity twice directly, and the corresponding six-dimensional sum that will be discarded is
\begin{equation}
\sum_{\substack{\frac{1}{35} \leqslant t_1 < \frac{17}{35} \\ \frac{1}{35} \leqslant t_2 < \min \left(t_1, \frac{1}{2}(1 - t_1) \right) \\ (t_1, t_2) \in \mathcal{N} \\ \frac{1}{35} \leqslant t_3 < \min \left(t_2, \frac{1}{2}(1 - t_1 - t_2) \right) \\  (t_1, t_2, t_3) \notin \mathcal{I}_3 \\ (t_1, t_2, t_3) \notin \mathcal{J}_3 \\ \frac{1}{35} \leqslant t_4 < \frac{1}{2} t_1 \\  (1 - t_1 - t_2 - t_3, t_2, t_3, t_4) \notin \mathcal{I}_4 \\  (1 - t_1 - t_2 - t_3, t_2, t_3, t_4, t_4) \in \mathcal{J}_5 \\ \frac{1}{35} \leqslant t_5 < \min \left(t_4, \frac{1}{2}(t_1 - t_4) \right) \\  (1 - t_1 - t_2 - t_3, t_2, t_3, t_4, t_5) \notin \mathcal{I}_5 \\ \frac{1}{35} \leqslant t_6 < \min \left(t_5, \frac{1}{2}(t_1 - t_4 - t_5) \right) \\  (1 - t_1 - t_2 - t_3, t_2, t_3, t_4, t_5, t_6) \notin \mathcal{I}_6 }} S\left(\mathcal{A}_{\beta_3 p_2 p_3 p_4 p_5 p_6}, p_6 \right).
\end{equation}

Note that $S_{4443}$ counts numbers of the form $\beta_3 p_2 p_3 p_4 \beta_4$, where $\beta_4 \sim v^{t_1 - t_4}$ and $\left(\beta_4, P(p_4)\right) = 1$. For $S_{44432}$ we can rewrite it as
\begin{equation}
\sum_{\substack{\frac{1}{35} \leqslant t_1 < \frac{17}{35} \\ \frac{1}{35} \leqslant t_2 < \min \left(t_1, \frac{1}{2}(1 - t_1) \right) \\ (t_1, t_2) \in \mathcal{N} \\ \frac{1}{35} \leqslant t_3 < \min \left(t_2, \frac{1}{2}(1 - t_1 - t_2) \right) \\  (t_1, t_2, t_3) \notin \mathcal{I}_3 \\ (t_1, t_2, t_3) \notin \mathcal{J}_3 \\ \frac{1}{35} \leqslant t_4 < \frac{1}{2} t_1 \\  (1 - t_1 - t_2 - t_3, t_2, t_3, t_4) \notin \mathcal{I}_4 \\  (1 - t_1 - t_2 - t_3, t_2, t_3, t_4, t_4) \notin \mathcal{J}_5 \\  (t_1 - t_4, t_2, t_3, t_4, t_3) \in \mathcal{J}_5 }} S\left(\mathcal{A}_{\beta_3 p_2 p_3 p_4}, p_4 \right) = \sum_{\substack{\frac{1}{35} \leqslant t_1 < \frac{17}{35} \\ \frac{1}{35} \leqslant t_2 < \min \left(t_1, \frac{1}{2}(1 - t_1) \right) \\ (t_1, t_2) \in \mathcal{N} \\ \frac{1}{35} \leqslant t_3 < \min \left(t_2, \frac{1}{2}(1 - t_1 - t_2) \right) \\  (t_1, t_2, t_3) \notin \mathcal{I}_3 \\ (t_1, t_2, t_3) \notin \mathcal{J}_3 \\ \frac{1}{35} \leqslant t_4 < \frac{1}{2} t_1 \\  (1 - t_1 - t_2 - t_3, t_2, t_3, t_4) \notin \mathcal{I}_4 \\  (1 - t_1 - t_2 - t_3, t_2, t_3, t_4, t_4) \notin \mathcal{J}_5 \\  (t_1 - t_4, t_2, t_3, t_4, t_3) \in \mathcal{J}_5 }} S\left(\mathcal{A}_{\beta_4 p_2 p_3 p_4}, p_3 \right)
\end{equation}
and use Buchstab's identity twice on $\beta_3$. The corresponding six-dimensional sum that will be discarded is
\begin{equation}
\sum_{\substack{\frac{1}{35} \leqslant t_1 < \frac{17}{35} \\ \frac{1}{35} \leqslant t_2 < \min \left(t_1, \frac{1}{2}(1 - t_1) \right) \\ (t_1, t_2) \in \mathcal{N} \\ \frac{1}{35} \leqslant t_3 < \min \left(t_2, \frac{1}{2}(1 - t_1 - t_2) \right) \\  (t_1, t_2, t_3) \notin \mathcal{I}_3 \\ (t_1, t_2, t_3) \notin \mathcal{J}_3 \\ \frac{1}{35} \leqslant t_4 < \frac{1}{2} t_1 \\  (1 - t_1 - t_2 - t_3, t_2, t_3, t_4) \notin \mathcal{I}_4 \\  (1 - t_1 - t_2 - t_3, t_2, t_3, t_4, t_4) \notin \mathcal{J}_5 \\  (t_1 - t_4, t_2, t_3, t_4, t_3) \in \mathcal{J}_5 \\ \frac{1}{35} \leqslant t_5 < \min \left(t_3, \frac{1}{2}(1 - t_1 - t_2 - t_3) \right) \\ (t_1 - t_4, t_2, t_3, t_4, t_5) \notin \mathcal{I}_5 \\ \frac{1}{35} \leqslant t_6 < \min \left(t_5, \frac{1}{2}(1 - t_1 - t_2 - t_3 - t_5) \right) \\ (t_1 - t_4, t_2, t_3, t_4, t_5, t_6) \notin \mathcal{I}_6 }} S\left(\mathcal{A}_{\beta_4 p_2 p_3 p_4 p_5 p_6}, p_6 \right).
\end{equation}

For $S_{44433}$ we use Buchstab's identity once:
\begin{align}
\nonumber S_{44433} =&\ \sum_{\substack{\frac{1}{35} \leqslant t_1 < \frac{17}{35} \\ \frac{1}{35} \leqslant t_2 < \min \left(t_1, \frac{1}{2}(1 - t_1) \right) \\ (t_1, t_2) \in \mathcal{N} \\ \frac{1}{35} \leqslant t_3 < \min \left(t_2, \frac{1}{2}(1 - t_1 - t_2) \right) \\  (t_1, t_2, t_3) \notin \mathcal{I}_3 \\ (t_1, t_2, t_3) \notin \mathcal{J}_3 \\ \frac{1}{35} \leqslant t_4 < \frac{1}{2} t_1 \\  (1 - t_1 - t_2 - t_3, t_2, t_3, t_4) \notin \mathcal{I}_4 \\  (1 - t_1 - t_2 - t_3, t_2, t_3, t_4, t_4) \notin \mathcal{J}_5 \\  (t_1 - t_4, t_2, t_3, t_4, t_3) \notin \mathcal{J}_5 \\ (1 - t_1 - t_2 - t_3, t_2, t_3, t_4) \in \mathcal{J}_4 \text{ and } (1 - t_1 - t_2 - t_3, t_1 - t_4, t_3, \min(t_2, t_4) ) \in \mathcal{J}_4 }} S\left(\mathcal{A}_{\beta_3 p_2 p_3 p_4}, p_4 \right) \\
\nonumber =&\ \sum_{\substack{\frac{1}{35} \leqslant t_1 < \frac{17}{35} \\ \frac{1}{35} \leqslant t_2 < \min \left(t_1, \frac{1}{2}(1 - t_1) \right) \\ (t_1, t_2) \in \mathcal{N} \\ \frac{1}{35} \leqslant t_3 < \min \left(t_2, \frac{1}{2}(1 - t_1 - t_2) \right) \\  (t_1, t_2, t_3) \notin \mathcal{I}_3 \\ (t_1, t_2, t_3) \notin \mathcal{J}_3 \\ \frac{1}{35} \leqslant t_4 < \frac{1}{2} t_1 \\  (1 - t_1 - t_2 - t_3, t_2, t_3, t_4) \notin \mathcal{I}_4 \\  (1 - t_1 - t_2 - t_3, t_2, t_3, t_4, t_4) \notin \mathcal{J}_5 \\  (t_1 - t_4, t_2, t_3, t_4, t_3) \notin \mathcal{J}_5 \\ (1 - t_1 - t_2 - t_3, t_2, t_3, t_4) \in \mathcal{J}_4 \text{ and } (1 - t_1 - t_2 - t_3, t_1 - t_4, t_3, \min(t_2, t_4) ) \in \mathcal{J}_4 }} S\left(\mathcal{A}_{\beta_3 p_2 p_3 p_4}, v^{\frac{1}{35}} \right) \\
\nonumber &- \sum_{\substack{\frac{1}{35} \leqslant t_1 < \frac{17}{35} \\ \frac{1}{35} \leqslant t_2 < \min \left(t_1, \frac{1}{2}(1 - t_1) \right) \\ (t_1, t_2) \in \mathcal{N} \\ \frac{1}{35} \leqslant t_3 < \min \left(t_2, \frac{1}{2}(1 - t_1 - t_2) \right) \\  (t_1, t_2, t_3) \notin \mathcal{I}_3 \\ (t_1, t_2, t_3) \notin \mathcal{J}_3 \\ \frac{1}{35} \leqslant t_4 < \frac{1}{2} t_1 \\  (1 - t_1 - t_2 - t_3, t_2, t_3, t_4) \notin \mathcal{I}_4 \\  (1 - t_1 - t_2 - t_3, t_2, t_3, t_4, t_4) \notin \mathcal{J}_5 \\  (t_1 - t_4, t_2, t_3, t_4, t_3) \notin \mathcal{J}_5 \\ (1 - t_1 - t_2 - t_3, t_2, t_3, t_4) \in \mathcal{J}_4 \text{ and } (1 - t_1 - t_2 - t_3, t_1 - t_4, t_3, \min(t_2, t_4) ) \in \mathcal{J}_4 \\ \frac{1}{35} \leqslant t_5 < \min \left(t_4, \frac{1}{2}(t_1 - t_4) \right) \\  (1 - t_1 - t_2 - t_3, t_2, t_3, t_4, t_5) \in \mathcal{I}_5 }} S\left(\mathcal{A}_{\beta_3 p_2 p_3 p_4 p_5}, p_5 \right) \\
\nonumber &- \sum_{\substack{\frac{1}{35} \leqslant t_1 < \frac{17}{35} \\ \frac{1}{35} \leqslant t_2 < \min \left(t_1, \frac{1}{2}(1 - t_1) \right) \\ (t_1, t_2) \in \mathcal{N} \\ \frac{1}{35} \leqslant t_3 < \min \left(t_2, \frac{1}{2}(1 - t_1 - t_2) \right) \\  (t_1, t_2, t_3) \notin \mathcal{I}_3 \\ (t_1, t_2, t_3) \notin \mathcal{J}_3 \\ \frac{1}{35} \leqslant t_4 < \frac{1}{2} t_1 \\  (1 - t_1 - t_2 - t_3, t_2, t_3, t_4) \notin \mathcal{I}_4 \\  (1 - t_1 - t_2 - t_3, t_2, t_3, t_4, t_4) \notin \mathcal{J}_5 \\  (t_1 - t_4, t_2, t_3, t_4, t_3) \notin \mathcal{J}_5 \\ (1 - t_1 - t_2 - t_3, t_2, t_3, t_4) \in \mathcal{J}_4 \text{ and } (1 - t_1 - t_2 - t_3, t_1 - t_4, t_3, \min(t_2, t_4) ) \in \mathcal{J}_4 \\ \frac{1}{35} \leqslant t_5 < \min \left(t_4, \frac{1}{2}(t_1 - t_4) \right) \\  (1 - t_1 - t_2 - t_3, t_2, t_3, t_4, t_5) \notin \mathcal{I}_5 }} S\left(\mathcal{A}_{\beta_3 p_2 p_3 p_4 p_5}, p_5 \right) \\
=&\ S_{444331} - S_{444332} + S_{444333}.
\end{align}
We have asymptotic formulas for $S_{444331}$ and $S_{444332}$ by Lemma~\ref{l31} and Lemma~\ref{l32} respectively, and we want to perform a role--reversal on $S_{444333}$. Note that $S_{444333}$ counts numbers of the form $\beta_3 p_2 p_3 p_4 p_5 \beta_5$, where $\beta_5 \sim v^{t_1 - t_4 - t_5}$ and $\left(\beta_5, P(p_5)\right) = 1$. Now we can either reverse the roles of $\beta_5$ and $p_2$ or reverse the roles of $\beta_5$ and $p_4$. Since we do not know which prime variable is larger, we choose to reverse the roles of $\beta_5$ and $\min(p_2, p_4)$. We can write this process as:
\begin{align}
\nonumber S_{444333} =&\ \sum_{\substack{\frac{1}{35} \leqslant t_1 < \frac{17}{35} \\ \frac{1}{35} \leqslant t_2 < \min \left(t_1, \frac{1}{2}(1 - t_1) \right) \\ (t_1, t_2) \in \mathcal{N} \\ \frac{1}{35} \leqslant t_3 < \min \left(t_2, \frac{1}{2}(1 - t_1 - t_2) \right) \\  (t_1, t_2, t_3) \notin \mathcal{I}_3 \\ (t_1, t_2, t_3) \notin \mathcal{J}_3 \\ \frac{1}{35} \leqslant t_4 < \frac{1}{2} t_1 \\  (1 - t_1 - t_2 - t_3, t_2, t_3, t_4) \notin \mathcal{I}_4 \\  (1 - t_1 - t_2 - t_3, t_2, t_3, t_4, t_4) \notin \mathcal{J}_5 \\  (t_1 - t_4, t_2, t_3, t_4, t_3) \notin \mathcal{J}_5 \\ (1 - t_1 - t_2 - t_3, t_2, t_3, t_4) \in \mathcal{J}_4 \text{ and } (1 - t_1 - t_2 - t_3, t_1 - t_4, t_3, \min(t_2, t_4) ) \in \mathcal{J}_4 \\ \frac{1}{35} \leqslant t_5 < \min \left(t_4, \frac{1}{2}(t_1 - t_4) \right) \\  (1 - t_1 - t_2 - t_3, t_2, t_3, t_4, t_5) \notin \mathcal{I}_5 }} S\left(\mathcal{A}_{\beta_3 p_2 p_3 p_4 p_5}, p_5 \right) \\
=&\ \sum_{\substack{\frac{1}{35} \leqslant t_1 < \frac{17}{35} \\ \frac{1}{35} \leqslant t_2 < \min \left(t_1, \frac{1}{2}(1 - t_1) \right) \\ (t_1, t_2) \in \mathcal{N} \\ \frac{1}{35} \leqslant t_3 < \min \left(t_2, \frac{1}{2}(1 - t_1 - t_2) \right) \\  (t_1, t_2, t_3) \notin \mathcal{I}_3 \\ (t_1, t_2, t_3) \notin \mathcal{J}_3 \\ \frac{1}{35} \leqslant t_4 < \frac{1}{2} t_1 \\  (1 - t_1 - t_2 - t_3, t_2, t_3, t_4) \notin \mathcal{I}_4 \\  (1 - t_1 - t_2 - t_3, t_2, t_3, t_4, t_4) \notin \mathcal{J}_5 \\  (t_1 - t_4, t_2, t_3, t_4, t_3) \notin \mathcal{J}_5 \\ (1 - t_1 - t_2 - t_3, t_2, t_3, t_4) \in \mathcal{J}_4 \\ (1 - t_1 - t_2 - t_3, t_1 - t_4, t_3, \min(t_2, t_4) ) \in \mathcal{J}_4 \\ \frac{1}{35} \leqslant t_5 < \min \left(t_4, \frac{1}{2}(t_1 - t_4) \right) \\  (1 - t_1 - t_2 - t_3, t_2, t_3, t_4, t_5) \notin \mathcal{I}_5 }} S\left(\mathcal{A}_{\beta_3 \beta_5 p_3 p_5 \min(p_2, p_4)}, \left(\frac{2v}{\beta_3 \beta_5 p_3 p_5 \min(p_2, p_4)}\right)^{\frac{1}{2}} \right).
\end{align}
And, by an application of Buchstab's identity, the six-dimensional sum that we need to discard is
\begin{equation}
\sum_{\substack{\frac{1}{35} \leqslant t_1 < \frac{17}{35} \\ \frac{1}{35} \leqslant t_2 < \min \left(t_1, \frac{1}{2}(1 - t_1) \right) \\ (t_1, t_2) \in \mathcal{N} \\ \frac{1}{35} \leqslant t_3 < \min \left(t_2, \frac{1}{2}(1 - t_1 - t_2) \right) \\  (t_1, t_2, t_3) \notin \mathcal{I}_3 \\ (t_1, t_2, t_3) \notin \mathcal{J}_3 \\ \frac{1}{35} \leqslant t_4 < \frac{1}{2} t_1 \\  (1 - t_1 - t_2 - t_3, t_2, t_3, t_4) \notin \mathcal{I}_4 \\  (1 - t_1 - t_2 - t_3, t_2, t_3, t_4, t_4) \notin \mathcal{J}_5 \\  (t_1 - t_4, t_2, t_3, t_4, t_3) \notin \mathcal{J}_5 \\ (1 - t_1 - t_2 - t_3, t_2, t_3, t_4) \in \mathcal{J}_4 \text{ and } (1 - t_1 - t_2 - t_3, t_1 - t_4, t_3, \min(t_2, t_4) ) \in \mathcal{J}_4 \\ \frac{1}{35} \leqslant t_5 < \min \left(t_4, \frac{1}{2}(t_1 - t_4) \right) \\  (1 - t_1 - t_2 - t_3, t_2, t_3, t_4, t_5) \notin \mathcal{I}_5 \\ \frac{1}{35} \leqslant t_4 < \frac{1}{2} \max(t_2, t_4) \\  (1 - t_1 - t_2 - t_3, t_1 - t_4 - t_5, t_3, t_5, t_6, \min(t_2, t_4)) \notin \mathcal{I}_6 }} S\left(\mathcal{A}_{\beta_3 \beta_5 p_3 p_5 p_6 \min(p_2, p_4)}, p_6 \right).
\end{equation}

For the remaining $S_{44434}$, we can use reversed Buchstab's identity to subtract the contribution of the almost-primes counted. Since we have two almost-prime variables $\beta_3$ and $\beta_4$, we can subtract two sums
\begin{equation}
\sum_{\substack{\frac{1}{35} \leqslant t_1 < \frac{17}{35} \\ \frac{1}{35} \leqslant t_2 < \min \left(t_1, \frac{1}{2}(1 - t_1) \right) \\ (t_1, t_2) \in \mathcal{N} \\ \frac{1}{35} \leqslant t_3 < \min \left(t_2, \frac{1}{2}(1 - t_1 - t_2) \right) \\  (t_1, t_2, t_3) \notin \mathcal{I}_3 \\ (t_1, t_2, t_3) \notin \mathcal{J}_3 \\ \frac{1}{35} \leqslant t_4 < \frac{1}{2} t_1 \\  (1 - t_1 - t_2 - t_3, t_2, t_3, t_4) \notin \mathcal{I}_4 \\  (1 - t_1 - t_2 - t_3, t_2, t_3, t_4, t_4) \notin \mathcal{J}_5 \\  (t_1 - t_4, t_2, t_3, t_4, t_3) \notin \mathcal{J}_5 \\ \text{either } (1 - t_1 - t_2 - t_3, t_2, t_3, t_4) \notin \mathcal{J}_4 \text{ or } (1 - t_1 - t_2 - t_3, t_1 - t_4, t_3, \min(t_2, t_4) ) \notin \mathcal{J}_4 \\ t_4 < t_5 < \frac{1}{2} (t_1 - t_4) \\ (1 - t_1 - t_2 - t_3, t_2, t_3, t_4, t_5) \in \mathcal{I}_5 }} S\left(\mathcal{A}_{\beta_3 p_2 p_3 p_4 p_5}, p_5 \right)
\end{equation}
and
\begin{equation}
\sum_{\substack{\frac{1}{35} \leqslant t_1 < \frac{17}{35} \\ \frac{1}{35} \leqslant t_2 < \min \left(t_1, \frac{1}{2}(1 - t_1) \right) \\ (t_1, t_2) \in \mathcal{N} \\ \frac{1}{35} \leqslant t_3 < \min \left(t_2, \frac{1}{2}(1 - t_1 - t_2) \right) \\  (t_1, t_2, t_3) \notin \mathcal{I}_3 \\ (t_1, t_2, t_3) \notin \mathcal{J}_3 \\ \frac{1}{35} \leqslant t_4 < \frac{1}{2} t_1 \\  (1 - t_1 - t_2 - t_3, t_2, t_3, t_4) \notin \mathcal{I}_4 \\  (1 - t_1 - t_2 - t_3, t_2, t_3, t_4, t_4) \notin \mathcal{J}_5 \\  (t_1 - t_4, t_2, t_3, t_4, t_3) \notin \mathcal{J}_5 \\ \text{either } (1 - t_1 - t_2 - t_3, t_2, t_3, t_4) \notin \mathcal{J}_4 \text{ or } (1 - t_1 - t_2 - t_3, t_1 - t_4, t_3, \min(t_2, t_4) ) \notin \mathcal{J}_4 \\ t_3 < t_5 < \frac{1}{2} (1 - t_1 - t_2 - t_3) \\ (t_1 - t_4, t_2, t_3, t_4, t_5) \in \mathcal{I}_5 }} S\left(\mathcal{A}_{\beta_4 p_2 p_3 p_4 p_5}, p_5 \right).
\end{equation}
The sum (35) counts numbers of the form $\beta_3 p_2 p_3 p_4 (p_5 \beta_6)$, where $\beta_6 \sim v^{t_1 - t_4 - t_5}$ and $\left(\beta_6, P(p_5)\right) = 1$. The sum (36) counts numbers of the form $(\beta_7 p_6) p_2 p_3 p_4 \beta_4$, where $\beta_7 \sim v^{1 - t_1 - t_2 - t_3 - t_6}$ and $\left(\beta_7, P(p_6)\right) = 1$. Now, one can easily find that numbers of the form $(\beta_7 p_6) p_2 p_3 p_4 (p_5 \beta_6)$ are counted in both sum, and we must subtract the following sum from the savings:
\begin{equation}
\sum_{\substack{\frac{1}{35} \leqslant t_1 < \frac{17}{35} \\ \frac{1}{35} \leqslant t_2 < \min \left(t_1, \frac{1}{2}(1 - t_1) \right) \\ (t_1, t_2) \in \mathcal{N} \\ \frac{1}{35} \leqslant t_3 < \min \left(t_2, \frac{1}{2}(1 - t_1 - t_2) \right) \\  (t_1, t_2, t_3) \notin \mathcal{I}_3 \\ (t_1, t_2, t_3) \notin \mathcal{J}_3 \\ \frac{1}{35} \leqslant t_4 < \frac{1}{2} t_1 \\  (1 - t_1 - t_2 - t_3, t_2, t_3, t_4) \notin \mathcal{I}_4 \\  (1 - t_1 - t_2 - t_3, t_2, t_3, t_4, t_4) \notin \mathcal{J}_5 \\  (t_1 - t_4, t_2, t_3, t_4, t_3) \notin \mathcal{J}_5 \\ \text{either } (1 - t_1 - t_2 - t_3, t_2, t_3, t_4) \notin \mathcal{J}_4 \text{ or } (1 - t_1 - t_2 - t_3, t_1 - t_4, t_3, \min(t_2, t_4) ) \notin \mathcal{J}_4 \\ t_4 < t_5 < \frac{1}{2} (t_1 - t_4) \\ (1 - t_1 - t_2 - t_3, t_2, t_3, t_4, t_5) \in \mathcal{I}_5 \\ t_3 < t_6 < \frac{1}{2} (1 - t_1 - t_2 - t_3) \\ (t_1 - t_4, t_2, t_3, t_4, t_6) \in \mathcal{I}_5 }} S\left(\mathcal{A}_{\beta_6 p_2 p_3 p_4 p_5 p_6}, p_6 \right).
\end{equation}

Combining all the four cases above, we get a loss from $S_{4443}$ and also a loss from $S_{444}$ of
\begin{align}
\nonumber & \left( \int_{(t_1, t_2, t_3, t_4) \in U_{\mathcal{N}08} } \frac{\omega \left(\frac{t_1 - t_4}{t_4}\right) \omega \left(\frac{1 - t_1 - t_2 - t_3}{t_3}\right)}{t_2 t_3^2 t_4^2} d t_4 d t_3 d t_2 d t_1 \right) \\
\nonumber -& \left( \int_{(t_1, t_2, t_3, t_4, t_5) \in U_{\mathcal{N}09} } \frac{\omega \left(\frac{t_1 - t_4 - t_5}{t_5}\right) \omega \left(\frac{1 - t_1 - t_2 - t_3}{t_3}\right)}{t_2 t_3^2 t_4 t_5^2} d t_5 d t_4 d t_3 d t_2 d t_1 \right) \\
\nonumber -& \left( \int_{(t_1, t_2, t_3, t_4, t_5) \in U_{\mathcal{N}10} } \frac{\omega \left(\frac{t_1 - t_4}{t_4}\right) \omega \left(\frac{1 - t_1 - t_2 - t_3 - t_5}{t_5}\right)}{t_2 t_3 t_4^2 t_5^2} d t_5 d t_4 d t_3 d t_2 d t_1 \right) \\
\nonumber +& \left( \int_{(t_1, t_2, t_3, t_4, t_5, t_6) \in U_{\mathcal{N}11} } \frac{\omega \left(\frac{t_1 - t_4 - t_5}{t_5}\right) \omega \left(\frac{1 - t_1 - t_2 - t_3 - t_6}{t_6}\right)}{t_2 t_3 t_4 t_5^2 t_6^2} d t_6 d t_5 d t_4 d t_3 d t_2 d t_1 \right) \\
\nonumber +& \left( \int_{(t_1, t_2, t_3, t_4, t_5, t_6) \in U_{\mathcal{N}12} } \frac{\omega \left(\frac{t_1 - t_4 - t_5 - t_6}{t_6}\right) \omega \left(\frac{1 - t_1 - t_2 - t_3}{t_3}\right)}{t_2 t_3^2 t_4 t_5 t_6^2} d t_6 d t_5 d t_4 d t_3 d t_2 d t_1 \right) \\
\nonumber +& \left( \int_{(t_1, t_2, t_3, t_4, t_5, t_6) \in U_{\mathcal{N}13} } \frac{\omega \left(\frac{t_1 - t_4}{t_4}\right) \omega \left(\frac{1 - t_1 - t_2 - t_3 - t_5 - t_6}{t_6}\right)}{t_2 t_3 t_4^2 t_5 t_6^2} d t_6 d t_5 d t_4 d t_3 d t_2 d t_1 \right) \\
\nonumber +& \left( \int_{(t_1, t_2, t_3, t_4, t_5, t_6) \in U_{\mathcal{N}14} } \frac{\omega \left(\frac{\max(t_2, t_4) - t_6}{t_6}\right) \omega \left(\frac{t_1 - t_4 - t_5}{t_5}\right) \omega \left(\frac{1 - t_1 - t_2 - t_3}{t_3}\right)}{\min(t_2, t_4) t_3^2 t_5^2 t_6^2} d t_6 d t_5 d t_4 d t_3 d t_2 d t_1 \right) \\
\nonumber \leqslant& \left( \int_{(t_1, t_2, t_3, t_4) \in U_{\mathcal{N}08} } \frac{\omega_1 \left(\frac{t_1 - t_4}{t_4}\right) \omega \left(\frac{1 - t_1 - t_2 - t_3}{t_3}\right)}{t_2 t_3^2 t_4^2} d t_4 d t_3 d t_2 d t_1 \right) \\
\nonumber -& \left( \int_{(t_1, t_2, t_3, t_4, t_5) \in U_{\mathcal{N}09} } \frac{\omega_0 \left(\frac{t_1 - t_4 - t_5}{t_5}\right) \omega_0 \left(\frac{1 - t_1 - t_2 - t_3}{t_3}\right)}{t_2 t_3^2 t_4 t_5^2} d t_5 d t_4 d t_3 d t_2 d t_1 \right) \\
\nonumber -& \left( \int_{(t_1, t_2, t_3, t_4, t_5) \in U_{\mathcal{N}10} } \frac{\omega_0 \left(\frac{t_1 - t_4}{t_4}\right) \omega_0 \left(\frac{1 - t_1 - t_2 - t_3 - t_5}{t_5}\right)}{t_2 t_3 t_4^2 t_5^2} d t_5 d t_4 d t_3 d t_2 d t_1 \right) \\
\nonumber +& \left( \int_{(t_1, t_2, t_3, t_4, t_5, t_6) \in U_{\mathcal{N}11} } \frac{\omega_1 \left(\frac{t_1 - t_4 - t_5}{t_5}\right) \omega_1 \left(\frac{1 - t_1 - t_2 - t_3 - t_6}{t_6}\right)}{t_2 t_3 t_4 t_5^2 t_6^2} d t_6 d t_5 d t_4 d t_3 d t_2 d t_1 \right) \\
\nonumber +& \left( \int_{(t_1, t_2, t_3, t_4, t_5, t_6) \in U_{\mathcal{N}12} } \frac{\omega_1 \left(\frac{t_1 - t_4 - t_5 - t_6}{t_6}\right) \omega_1 \left(\frac{1 - t_1 - t_2 - t_3}{t_3}\right)}{t_2 t_3^2 t_4 t_5 t_6^2} d t_6 d t_5 d t_4 d t_3 d t_2 d t_1 \right) \\
\nonumber +& \left( \int_{(t_1, t_2, t_3, t_4, t_5, t_6) \in U_{\mathcal{N}13} } \frac{\omega_1 \left(\frac{t_1 - t_4}{t_4}\right) \omega_1 \left(\frac{1 - t_1 - t_2 - t_3 - t_5 - t_6}{t_6}\right)}{t_2 t_3 t_4^2 t_5 t_6^2} d t_6 d t_5 d t_4 d t_3 d t_2 d t_1 \right) \\
\nonumber +& \left( \int_{(t_1, t_2, t_3, t_4, t_5, t_6) \in U_{\mathcal{N}14} } \frac{\omega_1 \left(\frac{\max(t_2, t_4) - t_6}{t_6}\right) \omega_1 \left(\frac{t_1 - t_4 - t_5}{t_5}\right) \omega_1 \left(\frac{1 - t_1 - t_2 - t_3}{t_3}\right)}{\min(t_2, t_4) t_3^2 t_5^2 t_6^2} d t_6 d t_5 d t_4 d t_3 d t_2 d t_1 \right) \\
\nonumber \leqslant&\ (0.057182 - 0.004301 - 0.000022 + 0.000001 + 0.002635 + 0.000001 + 0.000001) \\
=&\ 0.055497,
\end{align}
where
\begin{align}
\nonumber U_{\mathcal{N}08}(t_1, t_2, t_3, t_4) :=&\ \left\{ (t_1, t_2) \in \mathcal{N},\ \frac{1}{35} \leqslant t_3 < \min\left(t_2, \frac{1}{2}(1 - t_1 - t_2)\right), \right. \\
\nonumber & \quad (t_1, t_2, t_3) \notin \mathcal{I}_3,\ (t_1, t_2, t_3) \notin \mathcal{J}_3, \\
\nonumber & \quad \frac{1}{35} \leqslant t_4 < \frac{1}{2} t_1,\ (1 - t_1 - t_2 - t_3, t_2, t_3, t_4) \notin \mathcal{I}_4, \\
\nonumber & \quad (1 - t_1 - t_2 - t_3, t_2, t_3, t_4, t_4) \notin \mathcal{J}_5,\ (t_1 - t_4, t_2, t_3, t_4, t_3) \notin \mathcal{J}_5, \\
\nonumber & \quad \text{either } (1 - t_1 - t_2 - t_3, t_2, t_3, t_4) \notin \mathcal{J}_4 \\
\nonumber & \qquad \text{or } (1 - t_1 - t_2 - t_3, t_1 - t_4, t_3, \min(t_2, t_4)) \notin \mathcal{J}_4, \\
\nonumber & \left. \quad \frac{1}{35} \leqslant t_1 < \frac{17}{35},\ \frac{1}{35} \leqslant t_2 < \min\left(t_1, \frac{1}{2}(1-t_1) \right) \right\}, \\
\nonumber U_{\mathcal{N}09}(t_1, t_2, t_3, t_4, t_5) :=&\ \left\{ (t_1, t_2) \in \mathcal{N},\ \frac{1}{35} \leqslant t_3 < \min\left(t_2, \frac{1}{2}(1 - t_1 - t_2)\right), \right. \\
\nonumber & \quad (t_1, t_2, t_3) \notin \mathcal{I}_3,\ (t_1, t_2, t_3) \notin \mathcal{J}_3, \\
\nonumber & \quad \frac{1}{35} \leqslant t_4 < \frac{1}{2} t_1,\ (1 - t_1 - t_2 - t_3, t_2, t_3, t_4) \notin \mathcal{I}_4, \\
\nonumber & \quad (1 - t_1 - t_2 - t_3, t_2, t_3, t_4, t_4) \notin \mathcal{J}_5,\ (t_1 - t_4, t_2, t_3, t_4, t_3) \notin \mathcal{J}_5, \\
\nonumber & \quad \text{either } (1 - t_1 - t_2 - t_3, t_2, t_3, t_4) \notin \mathcal{J}_4 \\
\nonumber & \qquad \text{or } (1 - t_1 - t_2 - t_3, t_1 - t_4, t_3, \min(t_2, t_4)) \notin \mathcal{J}_4, \\
\nonumber & \quad t_4 < t_5 < \frac{1}{2}(t_1 - t_4),\ (1 - t_1 - t_2 - t_3, t_2, t_3, t_4, t_5) \in \mathcal{I}_5, \\
\nonumber & \left. \quad \frac{1}{35} \leqslant t_1 < \frac{17}{35},\ \frac{1}{35} \leqslant t_2 < \min\left(t_1, \frac{1}{2}(1-t_1) \right) \right\}, \\
\nonumber U_{\mathcal{N}10}(t_1, t_2, t_3, t_4, t_5) :=&\ \left\{ (t_1, t_2) \in \mathcal{N},\ \frac{1}{35} \leqslant t_3 < \min\left(t_2, \frac{1}{2}(1 - t_1 - t_2)\right), \right. \\
\nonumber & \quad (t_1, t_2, t_3) \notin \mathcal{I}_3,\ (t_1, t_2, t_3) \notin \mathcal{J}_3, \\
\nonumber & \quad \frac{1}{35} \leqslant t_4 < \frac{1}{2} t_1,\ (1 - t_1 - t_2 - t_3, t_2, t_3, t_4) \notin \mathcal{I}_4, \\
\nonumber & \quad (1 - t_1 - t_2 - t_3, t_2, t_3, t_4, t_4) \notin \mathcal{J}_5,\ (t_1 - t_4, t_2, t_3, t_4, t_3) \notin \mathcal{J}_5, \\
\nonumber & \quad \text{either } (1 - t_1 - t_2 - t_3, t_2, t_3, t_4) \notin \mathcal{J}_4 \\
\nonumber & \qquad \text{or } (1 - t_1 - t_2 - t_3, t_1 - t_4, t_3, \min(t_2, t_4)) \notin \mathcal{J}_4, \\
\nonumber & \quad t_3 < t_5 < \frac{1}{2}(1 - t_1 - t_2 - t_3),\ (t_1 - t_4, t_2, t_3, t_4, t_5) \in \mathcal{I}_5, \\
\nonumber & \left. \quad \frac{1}{35} \leqslant t_1 < \frac{17}{35},\ \frac{1}{35} \leqslant t_2 < \min\left(t_1, \frac{1}{2}(1-t_1) \right) \right\}, \\
\nonumber U_{\mathcal{N}11}(t_1, t_2, t_3, t_4, t_5, t_6) :=&\ \left\{ (t_1, t_2) \in \mathcal{N},\ \frac{1}{35} \leqslant t_3 < \min\left(t_2, \frac{1}{2}(1 - t_1 - t_2)\right), \right. \\
\nonumber & \quad (t_1, t_2, t_3) \notin \mathcal{I}_3,\ (t_1, t_2, t_3) \notin \mathcal{J}_3, \\
\nonumber & \quad \frac{1}{35} \leqslant t_4 < \frac{1}{2} t_1,\ (1 - t_1 - t_2 - t_3, t_2, t_3, t_4) \notin \mathcal{I}_4, \\
\nonumber & \quad (1 - t_1 - t_2 - t_3, t_2, t_3, t_4, t_4) \notin \mathcal{J}_5,\ (t_1 - t_4, t_2, t_3, t_4, t_3) \notin \mathcal{J}_5, \\
\nonumber & \quad \text{either } (1 - t_1 - t_2 - t_3, t_2, t_3, t_4) \notin \mathcal{J}_4 \\
\nonumber & \qquad \text{or } (1 - t_1 - t_2 - t_3, t_1 - t_4, t_3, \min(t_2, t_4)) \notin \mathcal{J}_4, \\
\nonumber & \quad t_4 < t_5 < \frac{1}{2}(t_1 - t_4),\ (1 - t_1 - t_2 - t_3, t_2, t_3, t_4, t_5) \in \mathcal{I}_5, \\
\nonumber & \quad t_3 < t_6 < \frac{1}{2}(1 - t_1 - t_2 - t_3),\ (t_1 - t_4, t_2, t_3, t_4, t_6) \in \mathcal{I}_5, \\
\nonumber & \left. \quad \frac{1}{35} \leqslant t_1 < \frac{17}{35},\ \frac{1}{35} \leqslant t_2 < \min\left(t_1, \frac{1}{2}(1-t_1) \right) \right\}, \\
\nonumber U_{\mathcal{N}12}(t_1, t_2, t_3, t_4, t_5, t_6) :=&\ \left\{ (t_1, t_2) \in \mathcal{N},\ \frac{1}{35} \leqslant t_3 < \min\left(t_2, \frac{1}{2}(1 - t_1 - t_2)\right), \right. \\
\nonumber & \quad (t_1, t_2, t_3) \notin \mathcal{I}_3,\ (t_1, t_2, t_3) \notin \mathcal{J}_3, \\
\nonumber & \quad \frac{1}{35} \leqslant t_4 < \frac{1}{2} t_1,\ (1 - t_1 - t_2 - t_3, t_2, t_3, t_4) \notin \mathcal{I}_4, \\
\nonumber & \quad (1 - t_1 - t_2 - t_3, t_2, t_3, t_4, t_4) \in \mathcal{J}_5, \\
\nonumber & \quad \frac{1}{35} \leqslant t_5 < \min \left(t_4, \frac{1}{2}(t_1 - t_4) \right), \\
\nonumber & \quad (1 - t_1 - t_2 - t_3, t_2, t_3, t_4, t_5) \notin \mathcal{I}_5, \\
\nonumber & \quad \frac{1}{35} \leqslant t_6 < \min \left(t_5, \frac{1}{2}(t_1 - t_4 - t_5) \right),\\
\nonumber & \quad (1 - t_1 - t_2 - t_3, t_2, t_3, t_4, t_5, t_6) \notin \mathcal{I}_6, \\
\nonumber & \left. \quad \frac{1}{35} \leqslant t_1 < \frac{17}{35},\ \frac{1}{35} \leqslant t_2 < \min\left(t_1, \frac{1}{2}(1-t_1) \right) \right\}, \\
\nonumber U_{\mathcal{N}13}(t_1, t_2, t_3, t_4, t_5, t_6) :=&\ \left\{ (t_1, t_2) \in \mathcal{N},\ \frac{1}{35} \leqslant t_3 < \min\left(t_2, \frac{1}{2}(1 - t_1 - t_2)\right), \right. \\
\nonumber & \quad (t_1, t_2, t_3) \notin \mathcal{I}_3,\ (t_1, t_2, t_3) \notin \mathcal{J}_3, \\
\nonumber & \quad \frac{1}{35} \leqslant t_4 < \frac{1}{2} t_1,\ (1 - t_1 - t_2 - t_3, t_2, t_3, t_4) \notin \mathcal{I}_4, \\
\nonumber & \quad (1 - t_1 - t_2 - t_3, t_2, t_3, t_4, t_4) \notin \mathcal{J}_5,\ (t_1 - t_4, t_2, t_3, t_4, t_3) \in \mathcal{J}_5, \\
\nonumber & \quad \frac{1}{35} \leqslant t_5 < \min \left(t_3, \frac{1}{2}(1 - t_1 - t_2 - t_3) \right), \\
\nonumber & \quad (t_1 - t_4, t_2, t_3, t_4, t_5) \notin \mathcal{I}_5, \\
\nonumber & \quad \frac{1}{35} \leqslant t_6 < \min \left(t_5, \frac{1}{2}(1 - t_1 - t_2 - t_3 - t_5) \right),\\
\nonumber & \quad (t_1 - t_4, t_2, t_3, t_4, t_5, t_6) \notin \mathcal{I}_6, \\
\nonumber & \left. \quad \frac{1}{35} \leqslant t_1 < \frac{17}{35},\ \frac{1}{35} \leqslant t_2 < \min\left(t_1, \frac{1}{2}(1-t_1) \right) \right\}, \\
\nonumber U_{\mathcal{N}14}(t_1, t_2, t_3, t_4, t_5, t_6) :=&\ \left\{ (t_1, t_2) \in \mathcal{N},\ \frac{1}{35} \leqslant t_3 < \min\left(t_2, \frac{1}{2}(1 - t_1 - t_2)\right), \right. \\
\nonumber & \quad (t_1, t_2, t_3) \notin \mathcal{I}_3,\ (t_1, t_2, t_3) \notin \mathcal{J}_3, \\
\nonumber & \quad \frac{1}{35} \leqslant t_4 < \frac{1}{2} t_1,\ (1 - t_1 - t_2 - t_3, t_2, t_3, t_4) \notin \mathcal{I}_4, \\
\nonumber & \quad (1 - t_1 - t_2 - t_3, t_2, t_3, t_4, t_4) \notin \mathcal{J}_5,\ (t_1 - t_4, t_2, t_3, t_4, t_3) \notin \mathcal{J}_5, \\
\nonumber & \quad (1 - t_1 - t_2 - t_3, t_2, t_3, t_4) \in \mathcal{J}_4 \\
\nonumber & \qquad \text{and } (1 - t_1 - t_2 - t_3, t_1 - t_4, t_3, \min(t_2, t_4)) \in \mathcal{J}_4, \\
\nonumber & \quad \frac{1}{35} \leqslant t_5 < \min \left(t_4, \frac{1}{2}(t_1 - t_4) \right), \\
\nonumber & \quad (1 - t_1 - t_2 - t_3, t_2, t_3, t_4, t_5) \notin \mathcal{I}_5, \\
\nonumber & \quad \frac{1}{35} \leqslant t_6 < \frac{1}{2} \max(t_2, t_4),\\
\nonumber & \quad (1 - t_1 - t_2 - t_3, t_1 - t_4 - t_5, t_3, t_5, t_6, \min(t_2, t_4)) \notin \mathcal{I}_6, \\
\nonumber & \left. \quad \frac{1}{35} \leqslant t_1 < \frac{17}{35},\ \frac{1}{35} \leqslant t_2 < \min\left(t_1, \frac{1}{2}(1-t_1) \right) \right\}.
\end{align}
Note that the seven integrals above correspond to $S_{44434}$ and sums in (35), (36), (37), (29), (31) and (34) respectively.

Finally, by (7), (23), (26) and (38), the total loss is less than
$$
0.7226 + 0.176459 + 0.040113 + 0.055497 < 0.995,
$$
and we have
$$
S\left(\mathcal{A},(2 v)^{\frac{1}{2}}\right) > 0.005 \left(\sum_{P < p \leqslant 2P}\frac{1}{p}\right)^{K} \frac{x^{\frac{1}{2}+\varepsilon}}{\log v}.
$$
Now the proof of Theorem~\ref{t1} is completed.

\bibliographystyle{plain}
\bibliography{bib}

\end{document}